\theoremstyle{plain}
\newtheorem{thm}[subsection]{Theorem}
\newtheorem{prop}[subsection]{Proposition}
\newtheorem{assumption}[subsection]{Basic Assumption}
\theoremstyle{definition}
\newtheorem{defn}[subsection]{Definition}
\theoremstyle{remark}
\newtheorem{rem}[subsection]{Remark}
\let\c@equation\c@subsection
\newcommand{\ZZ}{{ \mathbb{Z} }}
\newcommand{\ModR}{{ \mathsf{Mod}_\capR }}
\newcommand{\Spectra}{{ \mathsf{Sp}^\Sigma }}
\newcommand{\M}{{ \mathsf{M} }}
\newcommand{\SymArray}{{ \mathsf{SymArray} }}
\newcommand{\SymSeq}{{ \mathsf{SymSeq} }}
\newcommand{\Set}{{ \mathsf{Set} }}
\newcommand{\Lt}{{ \mathsf{Lt} }}
\newcommand{\Alg}{{ \mathsf{Alg} }}
\newcommand{\AlgO}{{ \Alg_\capO }}
\newcommand{\LtO}{{ \Lt_\capO }}
\newcommand{\capX}{{ \mathcal{X} }}
\newcommand{\capY}{{ \mathcal{Y} }}
\newcommand{\capZ}{{ \mathcal{Z} }}
\newcommand{\capO}{{ \mathcal{O} }}
\newcommand{\capR}{{ \mathcal{R} }}
\newcommand{\capA}{{ \mathcal{A} }}
\newcommand{\capB}{{ \mathcal{B} }}
\newcommand{\capP}{{ \mathcal{P} }}
\newcommand{\powerset} {{ \capP }}
\newcommand{\Ev}{{ \mathrm{Ev} }}
\newcommand{\id}{{ \mathrm{id} }}
\newcommand{\op}{{ \mathrm{op} }}
\newcommand{\pr}{{ \mathrm{pr} }}
\newcommand{\Smash}{{ \,\wedge\, }}
\newcommand{\tensor}{{ \otimes }}
\newcommand{\tensorcheck}{{ \check{\tensor} }}
\newcommand{\wequiv}{{ \ \simeq \ }}
\newcommand{\Iso}{{  \ \cong \ }}
\newcommand{\rarrow}{{ \rightarrow }}
\newcommand{\functor}[3]{{ {#1}\colon\thinspace{#2}\rarrow{#3} }}
\newcommand{\function}[3]{{ {#1}\colon\thinspace{#2}\rarrow{#3} }}
\DeclareMathOperator*{\hocolim}{hocolim}
\DeclareMathOperator*{\colim}{colim}
\DeclareMathOperator*{\holim}{holim}
\DeclareMathOperator{\U}{U}
\title[Higher homotopy excision and {B}lakers--{M}assey theorems]{Higher homotopy excision and {B}lakers--{M}assey theorems for structured ring spectra}
\author{Michael Ching}
\author{John E. Harper}
\address{Department of Mathematics, Amherst College, Amherst, MA, 01002, USA}
\email{mching@amherst.edu}
\address{Department of Mathematics, The Ohio State University, Newark, 1179 University Dr, Newark, OH 43055, USA}
\email{harper.903@math.osu.edu}
\begin{document}

\begin{abstract}
Working in the context of symmetric spectra, we prove higher homotopy excision and higher Blakers--Massey theorems, and their duals, for algebras and left modules over operads in the category of modules over a commutative ring spectrum (e.g., structured ring spectra).
\end{abstract}

\maketitle

\section{Introduction}

In this paper we establish the structured ring spectra analogs of Goodwillie's widely exploited and powerful cubical diagram results \cite{Goodwillie_calc2} for spaces. These cubical diagram results are a key ingredient in the authors' homotopic descent results \cite{Ching_Harper} on a structured ring spectra analog of Quillen-Sullivan theory \cite{Quillen_rational, Sullivan_MIT_notes, Sullivan_genetics}. They also establish an important part of the foundations for the theory of Goodwillie calculus in the context of structured ring spectra; see, for instance, Arone and Ching \cite{Arone_Ching}, Bauer, Johnson, and McCarthy \cite{Bauer_Johnson_McCarthy}, Ching \cite{Ching_duality}, Harper and Hess \cite[1.14]{Harper_Hess}, Kuhn \cite{Kuhn_survey}, and Pereira \cite{Pereira_general_context, Pereira_spectral_operad}. For example, it follows from our results that the identity functor on a category of structured ring spectra is analytic in the sense of Goodwillie \cite{Goodwillie_calc2}.

\begin{assumption}
\label{assumption:commutative_ring_spectrum}
From now on in this paper, we assume that $\capR$ is any commutative ring spectrum; i.e., we assume that $\capR$ is any commutative monoid object in the category $(\Spectra,\tensor_S,S)$ of symmetric spectra \cite{Hovey_Shipley_Smith, Schwede_book_project}; here, the tensor product $\tensor_S$ denotes the usual smash product \cite[2.2.3]{Hovey_Shipley_Smith} of symmetric spectra. We work mostly in the category of $\capR$-modules which we denote by $\ModR$.
\end{assumption}

\begin{rem}
Our results apply to many different types of algebraic structures on spectra including (i) associative ring spectra, which we simply call ring spectra, (ii) commutative ring spectra, and (iii) all of the $E_n$ ring spectra for $1\leq n\leq \infty$ that interpolate  between these two extremes of non-commutativity and commutativity. These structures, and many others, are examples of algebras over operads. We therefore work in the following general context: throughout this paper, $\capO$ is an operad in the category of $\capR$-modules (unless otherwise stated), $\AlgO$ is the category of $\capO$-algebras, and $\LtO$ is the category of left $\capO$-modules. 

While $\capO$-algebras are the main objects of interest for most readers, our results also apply in the more general case of left modules over the operad $\capO$; that generalization will be needed elsewhere.
\end{rem}

\begin{rem}
In this paper, we say that a symmetric sequence $X$ of $\capR$-modules is $n$-connected if each $\capR$-module $X[\mathbf{t}]$ is an $n$-connected spectrum. We say that an algebra (resp. left module) over an operad is $n$-connected if the underlying $\capR$-module (resp. symmetric sequence of $\capR$-modules) is $n$-connected, and similarly for operads. Similarly, we say that a map $X \rarrow Y$ of symmetric sequences is $n$-connected if each map $X[\mathbf{t}] \rarrow Y[\mathbf{t}]$ is an $n$-connected map of $\capR$-modules, and a map of $\capO$-algebras (resp. left $\capO$-modules) is $n$-connected if the underlying map of $\capR$-modules (resp. symmetric sequences) is $n$-connected.
\end{rem}

The main results of this paper are Theorems~\ref{thm:higher_blakers_massey} and \ref{thm:higher_dual_blakers_massey}, which are the analogs of Goodwillie's higher Blakers-Massey theorems \cite[2.5 and 2.6]{Goodwillie_calc2}. These results include various interesting special cases which we now highlight.

One such case is given by the homotopy excision result of Theorem~\ref{thm:homotopy_excision} below. In this $2$-cubical diagram situation, (i) Goerss-Hopkins \cite[2.3.13]{Goerss_Hopkins_moduli} prove a closely related homotopy excision result in the special case of simplicial algebras over an $E_\infty$ operad and remark that it is true more generally for any simplicial operad \cite[2.3.14]{Goerss_Hopkins_moduli}, (ii) Baues \cite[I.C.4]{Baues_combinatorial} proves a homotopy excision result in an algebraic setting that includes simplicial associative algebras, (iii) Schwede \cite[3.6]{Schwede_algebraic} proves a result that is very nearly homotopy excision in the context of algebras over a simplicial theory, and (iv) Dugger-Shipley \cite[2.3]{Dugger_Shipley} prove a homotopy excision result for associative ring spectra that our result recovers as a very special case. 

\begin{thm}[Homotopy excision for structured ring spectra]
\label{thm:homotopy_excision}
Let $\capO$ be an operad in $\capR$-modules. Let $\capX$ be a homotopy pushout square of $\capO$-algebras (resp. left $\capO$-modules) of the form
\begin{align*}
\xymatrix{
  \capX_\emptyset\ar[r]\ar[d] & \capX_{\{1\}}\ar[d]\\
  \capX_{\{2\}}\ar[r] & \capX_{\{1,2\}}
}
\end{align*}
Assume that $\capR,\capO,\capX_\emptyset$ are $(-1)$-connected. Consider any $k_1,k_2\geq -1$. If each $\capX_\emptyset\rarrow \capX_{\{i\}}$ is $k_i$-connected ($i=1,2$), then
\begin{itemize}
\item[(a)] $\capX$ is $l$-cocartesian (Definition \ref{defn:cofibration_cubes_etc}) in $\ModR$ (resp. $\SymSeq$) with $l=k_1+k_2 +1$,
\item[(b)] $\capX$ is $k$-cartesian (Definition \ref{defn:fibration_cubes_etc}) with $k=k_1+k_2$.
\end{itemize}
\end{thm}

Relaxing the assumption in Theorem~\ref{thm:homotopy_excision} that $\capX$ is a homotopy pushout square, we obtain the following result which is the direct analog for structured ring spectra of the original Blakers-Massey Theorem for spaces.

\begin{thm}[Blakers-Massey theorem for structured ring spectra]
\label{thm:blakers_massey}
Let $\capO$ be an operad in $\capR$-modules. Let $\capX$ be a commutative square of $\capO$-algebras (resp. left $\capO$-modules) of the form
\begin{align*}
\xymatrix{
  \capX_\emptyset\ar[r]\ar[d] & \capX_{\{1\}}\ar[d]\\
  \capX_{\{2\}}\ar[r] & \capX_{\{1,2\}}
}
\end{align*}
Assume that $\capR,\capO,\capX_\emptyset$ are $(-1)$-connected. Consider any $k_1,k_2\geq -1$, and  $k_{12}\in\ZZ$. If each $\capX_\emptyset\rarrow \capX_{\{i\}}$ is $k_i$-connected ($i=1,2$) and $\capX$ is $k_{12}$-cocartesian, then $\capX$ is $k$-cartesian, where $k$ is the minimum of $k_{12}-1$ and $k_{1}+k_{2}$.
\end{thm}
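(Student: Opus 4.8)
The plan is to deduce the theorem from the homotopy excision result, Theorem~\ref{thm:homotopy_excision}. First I would resolve $\capX$ into a homotopy pushout square. After replacing the punctured square $\capX_{\{1\}}\larrow\capX_\emptyset\rarrow\capX_{\{2\}}$ by a suitably cofibrant diagram of $\capO$-algebras (resp.\ left $\capO$-modules), let $\capP$ denote its pushout and let $\capX'$ be the square with the same three initial vertices as $\capX$ and terminal vertex $\capP$. Then $\capX'$ is a homotopy pushout square, so Theorem~\ref{thm:homotopy_excision} applies to it: by part (b), $\capX'$ is $(k_1+k_2)$-cartesian, and by part (a) the comparison map $\iota\colon\capP'\rarrow\capP$ --- where $\capP'$ is the homotopy pushout of the underlying diagram in $\ModR$ (resp.\ $\SymSeq$), i.e.\ $\iota$ is the canonical map out of the homotopy colimit of the punctured part --- is $(k_1+k_2+1)$-connected. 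Since $\capX$ is a commutative square, the universal property of $\capP$ supplies a canonical map $g\colon\capP\rarrow\capX_{\{1,2\}}$ under $\capX_{\{1\}}$ and $\capX_{\{2\}}$, hence a map of squares $\capX'\rarrow\capX$ which is the identity on the punctured part and is $g$ on the terminal vertex; equivalently, a $3$-cube $\capY$.

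The heart of the argument is then a short chain of connectivity estimates. The hypothesis that $\capX$ is $k_{12}$-cocartesian says precisely that the composite $\capP'\xrightarrow{\iota}\capP\xrightarrow{g}\capX_{\{1,2\}}$ is $k_{12}$-connected; feeding this, together with the $(k_1+k_2+1)$-connectivity of $\iota$, into the long exact sequence of the homotopy fiber sequence $\hofib(\iota)\rarrow\hofib(g\iota)\rarrow\hofib(g)$ shows that $g$ is $\min(k_{12},\,k_1+k_2+2)$-connected. Next, I would compute the total homotopy fiber of $\capY$ by first taking homotopy fibers in the $\capX'\rarrow\capX$ direction: three of those four maps are identities, so the resulting square has three contractible vertices and fourth vertex $\hofib(g)$, and its total homotopy fiber is therefore $\Omega^2\hofib(g)$. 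Finally, the homotopy fiber sequence relating the total homotopy fibers of $\capY$, $\capX'$, and $\capX$, combined with the $(k_1+k_2)$-cartesianness of $\capX'$ and the connectivity of $g$ just established, gives by a direct count (tracking the evident off-by-ones) that the total homotopy fiber of $\capX$ is $\min(k_1+k_2-1,\,k_{12}-2)$-connected; that is, $\capX$ is $k$-cartesian with $k=\min(k_{12}-1,\,k_1+k_2)$. The left $\capO$-module case is identical, carried out levelwise in $\SymSeq$.

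I expect the main obstacle to be the setup rather than the arithmetic: one must arrange the cofibrancy so that $\capX'$ is a genuine homotopy pushout square to which Theorem~\ref{thm:homotopy_excision} applies, and --- the crucial point --- identify the comparison map $\iota$ with the map whose connectivity is controlled by part (a) of that theorem; the remaining estimates are routine long exact sequence bookkeeping in $\ModR$ and $\SymSeq$. It is worth noting that the bound $k\le k_{12}-1$ alone uses no operad structure: since $\ModR$ and $\SymSeq$ are stable, the total homotopy cofiber of a square is $\Sigma^2$ of its total homotopy fiber, so a $k_{12}$-cocartesian square is automatically $(k_{12}-1)$-cartesian; the real content is the improvement to $k_1+k_2$ for highly cocartesian squares, which is precisely what homotopy excision contributes. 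As an alternative route, one can instead obtain this statement as the $n=2$ special case of the higher Blakers--Massey theorem, Theorem~\ref{thm:higher_blakers_massey}.
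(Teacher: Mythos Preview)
Your argument reaches the right conclusion, but there is one misidentification and the route is longer than the paper's.

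The misidentification: you write that ``$\capX$ is $k_{12}$-cocartesian says precisely that the composite $\capP'\xrightarrow{\iota}\capP\xrightarrow{g}\capX_{\{1,2\}}$ is $k_{12}$-connected.'' That is not correct. Cocartesianness of $\capX$ is measured in $\AlgO$ (resp.\ $\LtO$), so the hypothesis says that $g\colon\capP\to\capX_{\{1,2\}}$ itself is $k_{12}$-connected, not the composite $g\iota$. With the correct reading your derivation of the connectivity of $g$ becomes unnecessary---you have $g$ being $k_{12}$-connected directly---and the rest of your fiber-sequence argument goes through unchanged. (Fortunately, even the weaker bound $\min(k_{12},k_1+k_2+2)$ you obtain for $g$ suffices for the final arithmetic, so the slip does not break the conclusion.)

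The comparison: the paper's proof is shorter and uses only part (a) of homotopy excision together with stability in the underlying category. After reducing to a cofibration cube, one composes $\iota$ (which is $(k_1+k_2+1)$-connected by Theorem~\ref{thm:homotopy_excision}(a)) with $g$ (which is $k_{12}$-connected by hypothesis) to see that $\capX$ is $\min(k_1+k_2+1,k_{12})$-cocartesian in $\ModR$ (resp.\ $\SymSeq$); Proposition~\ref{prop:comparing_cocartesian_and_cartesian_estimates_in_ModR} then converts this directly to $\min(k_1+k_2,k_{12}-1)$-cartesian. Your approach invokes both parts (a) and (b) of excision and runs an additional $3$-cube total-fiber computation to reach the same place; it is valid but circuitous. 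Indeed, the stability observation you make at the end (``a $k_{12}$-cocartesian square is automatically $(k_{12}-1)$-cartesian'' in the underlying stable category) is essentially the whole of the paper's endgame---once you know $\capX$ is $l$-cocartesian in $\ModR$, you are done.
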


The following higher homotopy excision result lies at the heart of this paper. It can be thought of as a structured ring spectra analog of higher homotopy excision (see Goodwillie \cite[2.3]{Goodwillie_calc2}) in the context of spaces. This result also implies that the identity functors for $\AlgO$ and $\LtO$ are $0$-analytic in the sense of \cite[4.2]{Goodwillie_calc2}.

\begin{thm}[Higher homotopy excision for structured ring spectra]
\label{thm:higher_homotopy_excision}
Let $\capO$ be an operad in $\capR$-modules and $W$ a nonempty finite set. Let $\capX$ be a strongly $\infty$-cocartesian (Definition \ref{defn:cofibration_cubes_etc}) $W$-cube of $\capO$-algebras (resp. left $\capO$-modules). Assume that $\capR,\capO,\capX_\emptyset$ are $(-1)$-connected. Let $k_i\geq -1$ for each $i\in W$. If each $\capX_\emptyset\rarrow\capX_{\{i\}}$ is $k_i$-connected ($i\in W$),  then
\begin{itemize}
\item[(a)] $\capX$ is $l$-cocartesian in $\ModR$ (resp. $\SymSeq$) with $l=|W|-1+\sum_{i\in W}k_i$,
\item[(b)] $\capX$ is $k$-cartesian with $k=\sum_{i\in W}k_i$.
\end{itemize}
\end{thm}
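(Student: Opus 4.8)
The plan is to prove (a) directly by a filtration argument, and then to deduce (b) from (a) formally. For the formal step: both $\ModR$ and $\SymSeq$ are stable, so for any $W$-cube $\capZ$ in either category the total homotopy fiber and total homotopy cofiber are related by a $|W|$-fold suspension; hence $\capZ$ is $k$-cartesian if and only if it is $(k+|W|-1)$-cocartesian. Moreover the forgetful functors $\U\colon\AlgO\to\ModR$ and $\U\colon\LtO\to\SymSeq$ are right Quillen and reflect weak equivalences, so they preserve and reflect homotopy pullbacks; thus ``$k$-cartesian'' has the same meaning whether the cube is regarded in $\AlgO$ (resp.\ $\LtO$) or in the underlying category. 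Since $k+|W|-1=|W|-1+\sum_{i\in W}k_i=l$, statement (b) is therefore equivalent to statement (a), and it suffices to prove (a). (In particular, the case $|W|=2$ recovers Theorem~\ref{thm:homotopy_excision}.)

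To prove (a), first replace $\capX$ by a cofibrant model: since $\capX$ is strongly $\infty$-cocartesian it is determined up to equivalence by the maps $\capX_\emptyset\to\capX_{\{i\}}$, so we may assume each $\capX_\emptyset\to\capX_{\{i\}}$ is a cofibration of cofibrant $\capO$-algebras and $\capX_S=\capX_\emptyset\amalg_{i\in S}\capX_{\{i\}}$ (iterated pushout in $\AlgO$ under $\capX_\emptyset$) on the nose. Because $\capX_\emptyset\to\capX_{\{i\}}$ is $k_i$-connected and $\capR,\capO,\capX_\emptyset$ are $(-1)$-connected, a standard cell-structure argument lets us build each $\capX_\emptyset\to\capX_{\{i\}}$ by attaching free $\capO$-algebra cells in dimensions $\ge k_i+1$, i.e.\ as a transfinite composite of pushouts of maps $\capO\circ X\to\capO\circ Y$ on (coproducts of) generating cofibrations of $\ModR$ with $Y/X$ being $k_i$-connected. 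As the total homotopy cofiber of a $W$-cube commutes with filtered colimits and a filtered colimit of $l$-connected objects is $l$-connected, it is enough to treat the case where each $\capX_\emptyset\to\capX_{\{i\}}$ is a single pushout $\capX_\emptyset\amalg_{\capO\circ X_i}(\capO\circ Y_i)$ with $Y_i/X_i$ being $k_i$-connected.

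The core estimate is then an analysis of the underlying $\capR$-module (resp.\ symmetric sequence) of the iterated pushout $\capX_S=\capX_\emptyset\amalg_{\capO\circ X_i}(\capO\circ Y_i)_{i\in S}$. Applying the well-known filtration of pushouts of free $\capO$-algebras simultaneously in all $|W|$ cube directions yields a multi-indexed filtration of the $W$-cube $\capX$ whose associated graded pieces have the form
\begin{align*}
  \capO_{\capX_\emptyset}\bigl[\textstyle\sum_{i\in W}\mathbf{t}_i\bigr]\wedge_{\prod_{i\in W}\Sigma_{t_i}}\Bigl(\,\textstyle\bigwedge_{i\in W}(Y_i/X_i)^{\wedge t_i}\,\Bigr),
\end{align*}
where $\capO_{\capX_\emptyset}$ (the enveloping operad of $\capX_\emptyset$) is built from $\capO$ and $\capX_\emptyset$ and so is $(-1)$-connected. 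The key observation is that the pieces contributing to the total homotopy cofiber of the cube are exactly those with $t_i\ge1$ for every $i\in W$: a piece with some $t_i=0$ is constant in the $i$-th direction and dies in the total cofiber. Since smashing over $\Sigma$ against a $(-1)$-connected symmetric sequence preserves connectivity, and $\bigwedge_{i\in W}(Y_i/X_i)^{\wedge t_i}$ is $\bigl(\sum_{i\in W}t_i(k_i+1)-1\bigr)$-connected --- which for $t_i\ge1$ is at least $\sum_{i\in W}(k_i+1)-1=l$ --- every surviving layer is $l$-connected. Hence the total homotopy cofiber of $\capX$ is $l$-connected, i.e.\ $\capX$ is $l$-cocartesian, which proves (a). The case of left $\capO$-modules is identical, carried out in $\SymSeq$.

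The main obstacle is the filtration analysis of the cube: constructing the simultaneous multi-filtration of the $W$-cube from the $|W|$ free-cell attachments, identifying its associated graded in the displayed form, and verifying that precisely the layers with all $t_i\ge1$ survive into the total cofiber (via an iterated-cofiber argument across the multi-index). The supporting facts --- a cellular model for a $k_i$-connected map of $(-1)$-connected $\capO$-algebras, the connectivity of $\wedge_\Sigma$ against a $(-1)$-connected symmetric sequence, the $(-1)$-connectivity of the enveloping operad, and the stable duality between cartesian and cocartesian cubes used to deduce (b) --- are comparatively routine given the earlier development for operads in symmetric spectra.
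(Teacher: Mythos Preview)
Your outline is correct and reaches the same endpoint, but the route differs from the paper's in one structural respect. The paper does \emph{not} set up a simultaneous multi-indexed filtration of the $W$-cube. Instead it proves a stronger statement (Theorem~\ref{thm:pushout_cofibration_cube_homotopical_analysis}): for a pushout cofibration $W$-cube $\capX$, the $W$-cube $\capO_{\capX}[\mathbf{r}]$ is $l$-cocartesian for \emph{every} $r\ge 0$, and the theorem you want is the case $r=0$. This is proved by induction on $|W|$: one views the $(n{+}1)$-cube as a map $A\to B$ of $n$-cubes and applies the single-pushout filtration (Proposition~\ref{prop:filtering_OA}) to the map $A_\emptyset\to B_\emptyset$, reducing the comparison to knowing that the $n$-cube $\capO_A[\mathbf{t}{+}\mathbf{r}]$ is $(k_1+\dots+k_n+n-1)$-cocartesian, which is exactly the induction hypothesis at the parameter $\mathbf{t}{+}\mathbf{r}$. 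The need to control all $r$ at once is what forces the stronger statement.

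Your multi-filtration approach collapses this induction into a single step, and your associated-graded formula is the correct one (it is what one obtains by iterating Proposition~\ref{prop:filtering_OA} $|W|$ times and unwinding the layers of $\capO_{(-)}[\mathbf{r}]$ at each stage). What you gain is directness: you never need to track the $\capO_A[\mathbf{r}]$ for $r>0$, and the ``only layers with all $t_i\ge 1$ survive'' observation replaces the paper's inductive bookkeeping in diagrams~\eqref{eq:induced_filtration_diagram_for_studying_induced_map_higher_cubes} and~\eqref{eq:filtration_quotients_diagram_for_analyzing_connectivity_higher_cubes}. What the paper's route buys is that the single-direction filtration machinery is already in place from~\cite{Harper_Hess}, so no new multi-indexed filtration needs to be constructed or justified, and the stronger result about $\capO_\capX[\mathbf{r}]$ comes for free. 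Your reduction to single cell attachments and your deduction of~(b) from~(a) via stability match the paper's exactly.
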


The preceding results are all special cases of the following theorem which relaxes the assumption in Theorem \ref{thm:higher_homotopy_excision} that $\capX$ is strongly $\infty$-cocartesian. This result is a structured ring spectra analog of Goodwillie's higher Blakers-Massey theorem for spaces \cite[2.5]{Goodwillie_calc2}. 

\begin{thm}[Higher Blakers-Massey theorem for structured ring spectra]
\label{thm:higher_blakers_massey}
Let $\capO$ be an operad in $\capR$-modules and $W$ a nonempty finite set. Let $\capX$ be a $W$-cube of $\capO$-algebras (resp. left $\capO$-modules). Assume that $\capR,\capO,\capX_\emptyset$ are $(-1)$-connected, and suppose that
\begin{itemize}
\item[(i)] for each nonempty subset $V\subset W$, the $V$-cube $\partial_\emptyset^V\capX$ (formed by all maps in $\capX$ between $\capX_\emptyset$ and $\capX_V$) is $k_V$-cocartesian,
\item[(ii)] $-1\leq k_{U}\leq k_V$ for each $U\subset V$.
\end{itemize}
Then $\capX$ is $k$-cartesian, where $k$ is the minimum of $-|W|+\sum_{V\in\lambda}(k_V+1)$ over all partitions $\lambda$ of $W$ by nonempty sets.
\end{thm}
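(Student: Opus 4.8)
The plan is to build on higher homotopy excision (Theorem~\ref{thm:higher_homotopy_excision}) and adapt Goodwillie's cubical-homotopy-theoretic deduction of the higher Blakers--Massey theorem for spaces \cite{Goodwillie_calc2}, proceeding by induction on $|W|$. A structural point used throughout is that, although $\AlgO$ and $\LtO$ are not stable, the forgetful functors to $\ModR$ and $\SymSeq$ preserve homotopy limits, so every total homotopy fiber occurring in the argument is computed in the stable categories $\ModR$ (resp.\ $\SymSeq$), where the standard calculus of fiber and cofiber sequences of cubes applies; connectivity is measured throughout after forgetting to these underlying categories, and the two halves of Theorem~\ref{thm:higher_homotopy_excision} provide the bridge between cocartesian-ness in $\AlgO$ (resp.\ $\LtO$) and total-fiber estimates in $\ModR$ (resp.\ $\SymSeq$).

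Concretely, given the $W$-cube $\capX$ I would first form the strongly $\infty$-cocartesian $W$-cube $\capY$ obtained by iterated homotopy pushouts of the initial maps $\capX_\emptyset\rarrow\capX_{\{i\}}$ ($i\in W$), together with the canonical comparison map $\capY\rarrow\capX$ which is the identity on $\capX_\emptyset$ and on each $\capX_{\{i\}}$. An induction on $|V|$, using the definition of ``$k_V$-cocartesian'', the monotonicity hypothesis (ii), and the fact that homotopy colimits in $\AlgO$ and $\LtO$ preserve connectivity of maps (valid given the $(-1)$-connectedness hypotheses), shows that each $\capY_V\rarrow\capX_V$ is $c_V$-connected, where $c_V$ is the minimum of the $k_U$ over subsets $U\subseteq V$ with $|U|\geq 2$ (and $c_V=\infty$ when $|V|\leq 1$). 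Let $\capZ$ be the $W$-cube $V\mapsto\hofib(\capY_V\rarrow\capX_V)$; then $\capZ_\emptyset$ and each $\capZ_{\{i\}}$ are contractible, and forgetting to underlying objects yields a sequence of $W$-cubes $\capZ\rarrow\capY\rarrow\capX$ that is objectwise both a fiber and a cofiber sequence.

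Passing to total homotopy fibers produces a fiber sequence whose middle term is $\capY$'s, which Theorem~\ref{thm:higher_homotopy_excision}(b) estimates (its connectivity is the one coming from the partition of $W$ into singletons), and whose outer terms are those of $\capZ$ and of $\capX$; so it remains to estimate the total homotopy fiber of $\capZ$. For that I would argue by induction on $|W|$, fixing $a\in W$, regarding $\capZ$ as a map of $(|W|-1)$-cubes along $a$, and writing its total homotopy fiber as the homotopy fiber of the induced map between the total homotopy fibers of the two faces. The face not containing $a$ is itself the error cube of the $(W\setminus\{a\})$-subcube $\partial_\emptyset^{W\setminus\{a\}}\capX$ and is handled by the inductive hypothesis; the face containing $a$ requires, in addition, cocartesian estimates for subcubes of the form $\partial_{\{a\}}^{V\cup\{a\}}\capX$, which are obtained from hypothesis (i) together with the formal ``three cubes'' lemma relating the cocartesian-ness of $\partial_\emptyset^{V\cup\{a\}}\capX$, of its face $\partial_\emptyset^{V}\capX$, and of the complementary face, and with Theorem~\ref{thm:higher_homotopy_excision}(a) to control the discrepancy between homotopy colimits formed in $\AlgO$ (resp.\ $\LtO$) and in $\ModR$ (resp.\ $\SymSeq$). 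Propagating the arithmetic through the recursion, each partition $\lambda$ of $W$ contributes the term $-|W|+\sum_{V\in\lambda}(k_V+1)$, and the minimum over all such partitions is the asserted $k$.

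Since higher homotopy excision, the one genuinely hard input, is already available, what remains is bookkeeping --- but it is the main obstacle: one must organize the cubical recursion so that the partition-indexed minimum emerges exactly rather than with accumulated off-by-one losses, verify that each formal cubical lemma used (the behavior of total homotopy fibers under the ``cube as a map of cubes'' decomposition, the interaction of $k$-cartesian-ness with maps of cubes, and the face/complementary-face cocartesian estimates) is valid here, and --- the delicate point --- continually reconcile the cocartesian hypotheses, which live in the non-stable categories $\AlgO$ and $\LtO$, with the total-fiber estimates, which live in $\ModR$ and $\SymSeq$, the two parts of Theorem~\ref{thm:higher_homotopy_excision} being precisely what make this reconciliation possible.
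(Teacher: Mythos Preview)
Your outline and the paper's proof agree on the essential first move---form the strongly $\infty$-cocartesian cube $\capY$ built from the initial edges (in the paper's notation $\capY=\capX_{\capA^W_{\mathrm{min}}}$) and invoke higher homotopy excision to handle it---but diverge sharply thereafter. The paper does not form or analyze the objectwise fiber cube $\capZ$. Instead it follows Goodwillie's convex-subset interpolation: for each convex $\capA^W_{\mathrm{min}}\subset\capA\subset\powerset(W)$ one has a $W$-cube $\capX_\capA$ with $(\capX_\capA)_U=\colim_{T\in\capA,\,T\subset U}\capX_T$, so that $\capX_{\capA^W_{\mathrm{min}}}=\capY$ and $\capX_{\capA^W_{\mathrm{max}}}=\capX$. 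One shows by a double induction---outer on $|W|$, inner on $|\capA|$---that every $\capX_\capA$ is $k$-cartesian. The single inner step (Proposition~\ref{prop:cubical_induction_argument}) adds one maximal $A\in\capA$ of size $\geq 2$; the comparison $\capX_{\capA-\{A\}}\rarrow\capX_\capA$ is the identity away from the face $\partial_A^W$, and on that face it is an $((W-A)\cup\{*\})$-cube of strictly smaller dimension to which the outer inductive hypothesis applies. The partition $\lambda$ containing the block $A$ arises precisely at this step, and the rest of the partition comes from the smaller cube; this is what makes the arithmetic emerge without loss.

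Your proposed recursion on $\capZ$ is where I see a genuine gap. You split $\capZ$ along $a\in W$ and claim the face $\partial_\emptyset^{W\setminus\{a\}}\capZ$ ``is handled by the inductive hypothesis''; but the inductive hypothesis is Theorem~\ref{thm:higher_blakers_massey} for $\partial_\emptyset^{W\setminus\{a\}}\capX$, which estimates the total fiber of that $\capX$-face, not of the corresponding $\capZ$-face---translating between them reintroduces $\capY$'s face and the very fiber sequence you started from. More seriously, the opposite face $\partial_{\{a\}}^W\capZ$ is \emph{not} the error cube of $\partial_{\{a\}}^W\capX$ in your sense: its entries are $\hofib(\capY_{V\cup\{a\}}\rarrow\capX_{V\cup\{a\}})$, and $\partial_{\{a\}}^W\capY$ is the strongly cocartesian cube on the edges $\capX_{\{a\}}\rarrow\capY_{\{a,i\}}$, not on the edges $\capX_{\{a\}}\rarrow\capX_{\{a,i\}}$. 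So neither face of $\capZ$ is directly amenable to the inductive hypothesis as you have stated it, and the sketch does not supply a mechanism by which each non-singleton partition $\lambda$ contributes exactly once with the correct constant. The paper's $\capX_\capA$ interpolation is designed precisely to solve this organizational problem: each block $A$ of a partition enters at the step where $A$ is adjoined to $\capA$, and the residual cube is genuinely smaller.
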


For instance, when $n=3$, $k$ is the minimum of
\begin{align*}
  k_{\{1,2,3\}}-2,\quad
  &k_{\{1,2\}}+k_{\{3\}}-1,\\
  &k_{\{1,3\}}+k_{\{2\}}-1,\\
  &k_{\{2,3\}}+k_{\{1\}}-1,\quad
  k_{\{1\}}+k_{\{2\}}+k_{\{3\}}.
\end{align*}

Our other results are dual versions of Theorems~\ref{thm:homotopy_excision}, \ref{thm:blakers_massey}, \ref{thm:higher_homotopy_excision} and \ref{thm:higher_blakers_massey}.

\begin{thm}[Dual homotopy excision for structured ring spectra]
\label{thm:dual_homotopy_excision}
Let $\capO$ be an operad in $\capR$-modules. Let $\capX$ be a homotopy pullback square of $\capO$-algebras (resp. left $\capO$-modules) of the form
\begin{align*}
\xymatrix{
  \capX_\emptyset\ar[r]\ar[d] & \capX_{\{1\}}\ar[d]\\
  \capX_{\{2\}}\ar[r] & \capX_{\{1,2\}}
}
\end{align*}
Assume that $\capR,\capO,\capX_\emptyset$ are $(-1)$-connected. Consider any $k_1,k_2\geq -1$. If $\capX_{\{2\}}\rarrow \capX_{\{1,2\}}$ is $k_1$-connected and $\capX_{\{1\}}\rarrow \capX_{\{1,2\}}$ is $k_2$-connected, then $\capX$ is $k$-cocartesian with $k=k_{1}+k_{2}+2$.
\end{thm}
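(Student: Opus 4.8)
The plan is to deduce this from the (non-dual) homotopy excision theorem, Theorem~\ref{thm:homotopy_excision}, by the standard device of computing the total homotopy fiber of an auxiliary $3$-cube in two directions, which converts the cartesianness of a homotopy pushout square into cocartesianness of $\capX$. The argument is uniform for $\capO$-algebras and for left $\capO$-modules, with $\ModR$ replaced by $\SymSeq$ in the latter case.

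First I would transport the hypothesis to the underlying level. The forgetful functor $\AlgO\rarrow\ModR$ (resp. $\LtO\rarrow\SymSeq$) is a right adjoint, hence preserves homotopy limits, so the underlying square of $\capX$ is again a homotopy pullback; in particular parallel edges have equivalent homotopy fibers. Together with the two given connectivity estimates this shows that $\capX_\emptyset\rarrow\capX_{\{1\}}$ is $k_1$-connected and $\capX_\emptyset\rarrow\capX_{\{2\}}$ is $k_2$-connected. Now form the homotopy pushout $\capP := \capX_{\{1\}}\amalg^{h}_{\capX_\emptyset}\capX_{\{2\}}$ in $\AlgO$ (resp. $\LtO$); the maps of $\capX$ supply a canonical comparison map $\capP\rarrow\capX_{\{1,2\}}$, and by definition $\capX$ is $k$-cocartesian exactly when this map is $k$-connected. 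The homotopy pushout square with corner $\capP$, whose two edges out of $\capX_\emptyset$ are $k_1$- and $k_2$-connected, has $\capR,\capO,\capX_\emptyset$ all $(-1)$-connected, so Theorem~\ref{thm:homotopy_excision}(b) applies and shows that this square is $(k_1+k_2)$-cartesian; equivalently its total homotopy fiber is $(k_1+k_2-1)$-connected.

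Next I would assemble the comparison map into a $3$-cube $\capY$ whose two faces perpendicular to the new direction are the homotopy pushout square with corner $\capP$ and the original square $\capX$ (with corner $\capX_{\{1,2\}}$), the connecting maps being identities at $\capX_\emptyset,\capX_{\{1\}},\capX_{\{2\}}$ and the comparison map $\capP\rarrow\capX_{\{1,2\}}$ at the remaining vertex. Computing the total homotopy fiber of $\capY$ by first taking fibers in the two old directions identifies it with the homotopy fiber of the map from the total fiber of the pushout square to the total fiber of $\capX$; the latter is contractible since $\capX$ is a homotopy pullback, so this total fiber equals that of the pushout square and is $(k_1+k_2-1)$-connected. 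Computing it instead by first taking fibers in the new direction produces the total fiber of a square with three contractible vertices and remaining vertex $\hofib(\capP\rarrow\capX_{\{1,2\}})$, which is $\Omega^2\hofib(\capP\rarrow\capX_{\{1,2\}})$. Equating connectivities — here using stability of $\ModR$ (resp. $\SymSeq$), so that looping shifts connectivity by exactly one — yields that $\hofib(\capP\rarrow\capX_{\{1,2\}})$ is $(k_1+k_2+1)$-connected, i.e.\ $\capP\rarrow\capX_{\{1,2\}}$ is $(k_1+k_2+2)$-connected. Hence $\capX$ is $(k_1+k_2+2)$-cocartesian, as claimed.

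The main thing to get right is not any one estimate but the homotopy-(co)limit bookkeeping: that homotopy pullbacks, total homotopy fibers and loop objects in $\AlgO$ (resp. $\LtO$) may be computed in $\ModR$ (resp. $\SymSeq$) — so that the pullback hypothesis descends to the underlying level and the two evaluations of the total homotopy fiber of $\capY$ genuinely agree — and that the constructed homotopy pushout square really does meet the hypotheses of Theorem~\ref{thm:homotopy_excision} on the nose. Granting this, the rest is the formal manipulation of a total homotopy fiber computed in two directions.
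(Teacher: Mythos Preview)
Your argument is correct, but it takes a more elaborate route than the paper's. After the same first reduction (transporting the connectivity along parallel edges of the homotopy pullback so that $\capX_\emptyset\rarrow\capX_{\{i\}}$ is $k_i$-connected), the paper works directly with the factorization
\[
  \colim\nolimits^{\SymSeq}_{\powerset_1(W)}\capX \xrightarrow{\ (*)\ } \colim\nolimits^{\LtO}_{\powerset_1(W)}\capX \xrightarrow{\ (**)\ } \capX_W.
\]
Theorem~\ref{thm:homotopy_excision}(a) makes $(*)$ $(k_1+k_2+1)$-connected; the pullback hypothesis together with stability of $\SymSeq$ (Proposition~\ref{prop:comparing_cocartesian_and_cartesian_estimates_in_ModR}) makes the composite a weak equivalence; and Proposition~\ref{prop:connectivity_estimates_for_composition_of_maps}(b) then gives $(**)$ the connectivity $k_1+k_2+2$. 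Your approach instead invokes part~(b) of excision and repackages the same three ingredients (excision, the pullback hypothesis, stability) as an iterated total-fiber computation on an auxiliary $3$-cube; this is perfectly valid and makes the role of stability very explicit via the $\Omega^2$-shift, but the paper's route avoids the auxiliary cube entirely by comparing the underlying and algebraic pushouts against the common target $\capX_W$, which is shorter and is also the template reused in the proofs of Theorems~\ref{thm:dual_blakers_massey} and~\ref{thm:higher_dual_homotopy_excision}.
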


The following result relaxes the assumption that $\capX$ is a homotopy pullback square.

\begin{thm}[Dual Blakers-Massey theorem for structured ring spectra]
\label{thm:dual_blakers_massey}
Let $\capO$ be an operad in $\capR$-modules. Let $\capX$ be a commutative square of $\capO$-algebras (resp. left $\capO$-modules) of the form
\begin{align*}
\xymatrix{
  \capX_\emptyset\ar[r]\ar[d] & \capX_{\{1\}}\ar[d]\\
  \capX_{\{2\}}\ar[r] & \capX_{\{1,2\}}
}
\end{align*}
Assume that $\capR,\capO,\capX_\emptyset$ are $(-1)$-connected. Consider any $k_1,k_2,k_{12}\geq -1$ with $k_1\leq k_{12}$ and $k_2\leq k_{12}$. If $\capX_{\{2\}}\rarrow \capX_{\{1,2\}}$ is $k_1$-connected, $\capX_{\{1\}}\rarrow \capX_{\{1,2\}}$ is $k_2$-connected, and $\capX$ is $k_{12}$-cartesian, then $\capX$ is $k$-cocartesian, where $k$ is the minimum of $k_{12}+1$ and $k_{1}+k_{2}+2$.
\end{thm}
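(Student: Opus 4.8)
The plan is to deduce the statement from the homotopy excision theorem (Theorem~\ref{thm:homotopy_excision}), using the same cubical mechanism that underlies the Blakers--Massey theorem (Theorem~\ref{thm:blakers_massey}): both results are read off from a single identity of total homotopy fibers by solving for a different unknown. Throughout I would use that homotopy limits of $\capO$-algebras (resp.\ left $\capO$-modules) agree with those of the underlying $\capR$-modules (resp.\ symmetric sequences), that all connectivity is measured on underlying objects, and that ``$k$-cocartesian'' refers to the homotopy pushout formed in $\AlgO$ (resp.\ $\LtO$).

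The first step converts the hypotheses into statements about the maps \emph{out of} $\capX_\emptyset$. Since $\capX$ is $k_{12}$-cartesian, the canonical map $\capX_\emptyset\to\capX_{\{1\}}\times^{h}_{\capX_{\{1,2\}}}\capX_{\{2\}}$ is $k_{12}$-connected, and the projection $\capX_{\{1\}}\times^{h}_{\capX_{\{1,2\}}}\capX_{\{2\}}\to\capX_{\{1\}}$ is a homotopy base change of the $k_1$-connected map $\capX_{\{2\}}\to\capX_{\{1,2\}}$, hence (the underlying $\capR$-modules being stable) is also $k_1$-connected; composing and using $k_1\leq k_{12}$ shows $\capX_\emptyset\to\capX_{\{1\}}$ is $k_1$-connected, and symmetrically $\capX_\emptyset\to\capX_{\{2\}}$ is $k_2$-connected (so in particular all four objects of $\capX$ are $(-1)$-connected). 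Now let $P$ be the homotopy pushout of $\capX_{\{1\}}\leftarrow\capX_\emptyset\to\capX_{\{2\}}$ formed in $\AlgO$ (resp.\ $\LtO$), let $\eta\colon P\to\capX_{\{1,2\}}$ be the canonical comparison map -- so that ``$\capX$ is $k$-cocartesian'' means precisely ``$\eta$ is $k$-connected'' -- and let $\widetilde{\capX}$ be the homotopy pushout square on the vertices $\capX_\emptyset,\capX_{\{1\}},\capX_{\{2\}},P$. Then Theorem~\ref{thm:homotopy_excision} applies to $\widetilde{\capX}$ (its initial maps are $k_i$-connected with $k_i\geq-1$, and $\capR,\capO,\capX_\emptyset$ are $(-1)$-connected), and part~(b) gives that $\widetilde{\capX}$ is $(k_1+k_2)$-cartesian; that is, $\operatorname{tfib}(\widetilde{\capX})$ is $(k_1+k_2-1)$-connected.

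The last step is a cubical diagram chase. The map of squares $\widetilde{\capX}\to\capX$ (the identity on $\capX_\emptyset,\capX_{\{1\}},\capX_{\{2\}}$, and $\eta$ on the fourth vertex) is a $3$-cube $\capW$. Computing $\operatorname{tfib}(\capW)$ by slicing along the new direction identifies it with $\hofib\bigl(\operatorname{tfib}(\widetilde{\capX})\to\operatorname{tfib}(\capX)\bigr)$; slicing instead along one of the two original directions, one of the resulting square faces is degenerate (a pair of its parallel edges are identities) so has vanishing total homotopy fiber, while the other has total homotopy fiber $\Omega\hofib(\eta)$, whence $\operatorname{tfib}(\capW)\simeq\Omega^{2}\hofib(\eta)$. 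Thus there is a homotopy fiber sequence $\Omega^{2}\hofib(\eta)\to\operatorname{tfib}(\widetilde{\capX})\to\operatorname{tfib}(\capX)$. Since $\operatorname{tfib}(\widetilde{\capX})$ is $(k_1+k_2-1)$-connected and $\operatorname{tfib}(\capX)$ is $(k_{12}-1)$-connected, the long exact sequence of homotopy groups forces $\Omega^{2}\hofib(\eta)$ to be $\min(k_1+k_2-1,\,k_{12}-2)$-connected, so $\hofib(\eta)$ is $\min(k_1+k_2+1,\,k_{12})$-connected, so $\eta$ is $\min(k_1+k_2+2,\,k_{12}+1)$-connected. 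As $k$ is by definition this minimum, $\capX$ is $k$-cocartesian.

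The only substantive ingredient is Theorem~\ref{thm:homotopy_excision}(b), which carries all of the work of comparing homotopy pushouts in $\AlgO$ with those of underlying $\capR$-modules; granting it, everything else is formal manipulation of total homotopy fibers together with the elementary connectivity estimates for homotopy (co)fiber sequences of $\capR$-modules. I expect the point requiring the most care is the bookkeeping of the first step: the hypotheses $k_1\leq k_{12}$ and $k_2\leq k_{12}$ are exactly what is needed to transport connectivity from the two maps into $\capX_{\{1,2\}}$ to the two maps out of $\capX_\emptyset$, and thereby to bring $\widetilde{\capX}$ into the range of Theorem~\ref{thm:homotopy_excision}.
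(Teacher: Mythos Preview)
Your proof is correct, and shares its essential input (Theorem~\ref{thm:homotopy_excision}) with the paper's, though the packaging differs. Your first step---converting the hypotheses about the maps into $\capX_{\{1,2\}}$ to connectivity of $\capX_\emptyset\to\capX_{\{i\}}$ via the $k_{12}$-cartesian hypothesis and the inequalities $k_i\le k_{12}$---is needed to invoke homotopy excision but is left implicit in the paper's proof; making it explicit is a genuine improvement in exposition.

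Where you diverge is in the endgame. The paper uses part~(a) of Theorem~\ref{thm:homotopy_excision} rather than part~(b): after replacing $\capX$ by a cofibration square, it factors the underlying-category comparison map as
\[
  \colim\nolimits^{\SymSeq}_{\powerset_1(W)}\capX \xrightarrow{\ (*)\ } \colim\nolimits^{\LtO}_{\powerset_1(W)}\capX \xrightarrow{\ (**)\ } \capX_W.
\]
The composite is $(k_{12}+1)$-connected since $\capX$ is $(k_{12}+1)$-cocartesian in $\SymSeq$ by Proposition~\ref{prop:comparing_cocartesian_and_cartesian_estimates_in_ModR}; the map $(*)$ is $(k_1+k_2+1)$-connected by homotopy excision applied to your $\widetilde{\capX}$; and then Proposition~\ref{prop:connectivity_estimates_for_composition_of_maps}(b) gives that $(**)=\eta$ is $\min(k_{12}+1,\,k_1+k_2+2)$-connected. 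Your $3$-cube total-fiber argument is the same statement dressed in cartesian rather than cocartesian language (as it must be, since parts~(a) and~(b) of Theorem~\ref{thm:homotopy_excision} differ precisely by the shift of Proposition~\ref{prop:comparing_cocartesian_and_cartesian_estimates_in_ModR}); the paper's route is shorter because it avoids the auxiliary $3$-cube and reads the connectivity of $\eta$ directly off a two-step composition.
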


\begin{thm}[Higher dual homotopy excision for structured ring spectra]
\label{thm:higher_dual_homotopy_excision}
Let $\capO$ be an operad in $\capR$-modules and $W$ a finite set with $|W|\geq 2$. Let $\capX$ be a strongly $\infty$-cartesian (Definition \ref{defn:fibration_cubes_etc}) $W$-cube of $\capO$-algebras (resp. left $\capO$-modules). Assume that $\capR,\capO,\capX_\emptyset$ are $(-1)$-connected. Let $k_i\geq -1$ for each $i\in W$. If each $\capX_{W-\{i\}}\rarrow\capX_W$ is $k_i$-connected ($i\in W$),  then $\capX$ is $k$-cocartesian with $k=|W|+\sum_{i\in W}k_i$.
\end{thm}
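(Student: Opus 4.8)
The plan is to argue by induction on $|W|$. The base case $|W|=2$ is precisely Theorem~\ref{thm:dual_homotopy_excision}: a strongly $\infty$-cartesian $2$-cube is a homotopy pullback square, and the maps $\capX_{W-\{i\}}\rarrow\capX_W$ are exactly its two maps into the terminal vertex. For the inductive step, assume $|W|\geq 3$ and fix $w\in W$. Regard $\capX$ as a map of $(W-\{w\})$-cubes $\capX^0\rarrow\capX^1$, where $\capX^0_V=\capX_V$ and $\capX^1_V=\capX_{V\cup\{w\}}$ for $V\subseteq W-\{w\}$. Every $2$-face of $\capX^0$ and of $\capX^1$ is a $2$-face of $\capX$, so both are strongly $\infty$-cartesian. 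The forgetful functor $\AlgO\rarrow\ModR$ (resp. $\LtO\rarrow\SymSeq$) creates weak equivalences and detects homotopy pullback squares, and base change preserves the connectivity of a map in the stable category $\ModR$ (resp. $\SymSeq$); hence every rectangular subdiagram of a strongly $\infty$-cartesian cube is homotopy cartesian, so each $\capX_\emptyset\rarrow\capX_{\{i\}}$ is $k_i$-connected and the terminal maps of both $\capX^0$ and $\capX^1$ are $k_i$-connected for $i\in W-\{w\}$. By the inductive hypothesis $\capX^0$ and $\capX^1$ are each $m$-cocartesian with $m=|W-\{w\}|+\sum_{i\in W-\{w\}}k_i$.

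I would then reduce the $k$-cocartesianness of $\capX$ to that of a single square. Let $L^j=\hocolim_{V\subsetneq W-\{w\}}\capX^j_V$ be the punctured homotopy colimit of $\capX^j$ in $\AlgO$ (resp. $\LtO$), so the canonical maps $L^0\rarrow\capX_{W-\{w\}}$ and $L^1\rarrow\capX_W$ are $m$-connected. A Fubini-type decomposition of the punctured $W$-cube poset along the direction $w$ yields a natural weak equivalence
\[
\hocolim_{V\subsetneq W}\capX_V\ \simeq\ \capX_{W-\{w\}}\,\sqcup^h_{L^0}\,L^1
\]
over $\capX_W$, under which the cocartesian comparison map of $\capX$ becomes that of the square
\begin{align*}
\xymatrix{
L^0\ar[r]\ar[d] & L^1\ar[d]\\
\capX_{W-\{w\}}\ar[r] & \capX_W
}
\end{align*}
Thus $\capX$ is $k$-cocartesian if and only if this square is. Its right-hand map $L^1\rarrow\capX_W$ is $m$-connected and its bottom map $\capX_{W-\{w\}}\rarrow\capX_W$ is $k_w$-connected. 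Granting that the square is $(m+k_w)$-cartesian, the dual Blakers--Massey theorem~\ref{thm:dual_blakers_massey} (applied with $k_1=m$, $k_2=k_w$, $k_{12}=m+k_w$; the boundary cases where $k_w=-1$ follow directly from $(-1)$-connectivity) shows it is $\min(m+k_w+1,\,m+k_w+2)=(m+k_w+1)$-cocartesian. Since $m+k_w+1=|W|+\sum_{i\in W}k_i$, this completes the induction.

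It remains to see that the square above is $(m+k_w)$-cartesian, and this is the heart of the matter: it is where the difference between homotopy pushouts formed in $\AlgO$ (resp. $\LtO$) and in the stable category $\ModR$ (resp. $\SymSeq$) enters. Because $\capX^0$ and $\capX^1$ are strongly $\infty$-cartesian they are $\infty$-cocartesian after forgetting structure, so the total homotopy fiber of the square is identified with the homotopy fiber of the induced map $E^0\rarrow E^1$, where $E^j:=\hofib\bigl(L^j\rarrow\capX^j_{W-\{w\}}\bigr)$ is $(m-1)$-connected by the inductive hypothesis; equivalently $E^j$ is the cofiber of the comparison map from the $\ModR$- (resp. $\SymSeq$-) homotopy pushout to the underlying object of the $\AlgO$- (resp. $\LtO$-) homotopy pushout defining $L^j$. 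Under the pushout filtration of $L^j$ --- the same filtration underlying Theorem~\ref{thm:higher_homotopy_excision}, whose layers are extended powers $\capO[\mathbf{t}]\wedge_{\Sigma_t}(-)$ assembled from the base object $\capX^j_\emptyset$, the operad $\capO$, and $t$-fold smash powers of the $k_i$-connected cells of the $1$-faces $\capX_\emptyset\rarrow\capX_{\{i\}}$ --- passing from $j=0$ to $j=1$ is, layerwise, a base change along the $k_w$-connected map of base objects $\capX_\emptyset\rarrow\capX_{\{w\}}$ (the induced maps of $1$-face cofibers being equivalences, since the corresponding $2$-faces of $\capX$ are homotopy cartesian, hence cocartesian, in $\ModR$). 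Combined with the $(m-1)$-connectivity of $E^0$ this forces $\hofib(E^0\rarrow E^1)$ to be $(m+k_w-1)$-connected, i.e. the square is $(m+k_w)$-cartesian. I expect the principal obstacle to be exactly this filtration comparison: constructing the pushout filtrations of $L^0$ and $L^1$ compatibly with the map $\capX^0\rarrow\capX^1$, bounding how the connectivities $k_w$ and $k_i$ combine in the $t$-fold smash powers, and verifying that the $(-1)$-connectivity hypotheses propagate to $L^0$, $L^1$, and all the layers. The remaining ingredients are formal cubical diagram calculus together with the already-established Theorems~\ref{thm:dual_homotopy_excision}, \ref{thm:dual_blakers_massey}, and~\ref{thm:higher_homotopy_excision}.
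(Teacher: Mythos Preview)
Your inductive scaffold (split $\capX$ as a map of $(W-\{w\})$-cubes, apply the inductive hypothesis to the two faces, Fubini-reduce to a square, then invoke Theorem~\ref{thm:dual_blakers_massey}) is sound, and the base case is correct. The gap is in the step you yourself flag as the ``principal obstacle'': showing the square
\[
\xymatrix{
L^0\ar[r]\ar[d] & L^1\ar[d]\\
\capX_{W-\{w\}}\ar[r] & \capX_W
}
\]
is $(m+k_w)$-cartesian. Your argument invokes ``the pushout filtration of $L^j$ --- the same filtration underlying Theorem~\ref{thm:higher_homotopy_excision}, whose layers are extended powers $\capO[\mathbf{t}]\wedge_{\Sigma_t}(-)$ assembled from the base object $\capX^j_\emptyset$ \dots and $t$-fold smash powers of the $k_i$-connected cells of the $1$-faces.'' But that filtration (Proposition~\ref{prop:filtering_OA} and Theorem~\ref{thm:pushout_cofibration_cube_homotopical_analysis}) is a filtration of the pushout in $\AlgO$ of a \emph{pushout cofibration cube}---a cube built from its initial vertex and its $1$-faces. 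Your $\capX^j$ is strongly $\infty$-cartesian, hence strongly $\infty$-cocartesian in $\ModR$, but it is \emph{not} a pushout cube in $\AlgO$; its punctured colimit $L^j$ is not the terminal vertex of any such filtration, and in particular does not decompose into the extended-power layers you describe. The observation that the cofibers of the $1$-faces agree for $j=0,1$ (true, by the $\infty$-cartesian $2$-faces) does not help, because the higher vertices $\capX^j_V$ for $|V|\ge 2$ contribute to $L^j$ in a way that is not captured by the $1$-faces alone. Making your filtration comparison precise would amount to controlling how far $\capX^j$ is from being strongly $\infty$-cocartesian in $\AlgO$---and that is exactly the content of the higher Blakers--Massey theorem.

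The paper's proof avoids this difficulty by using Theorem~\ref{thm:higher_blakers_massey} directly. One does not split $\capX$ along a direction; instead one replaces its terminal vertex by the $\AlgO$-punctured colimit to form $\capX':=\capX_{\capA^W_{\mathrm{max}}-\{W\}}$, which is $\infty$-cocartesian in $\AlgO$ by construction. The inductive hypothesis supplies cocartesian estimates for each proper face $\partial_\emptyset^V\capX'=\partial_\emptyset^V\capX$ (these faces are again strongly $\infty$-cartesian, and the connectivities $k_i$ propagate to all parallel edges via the $\infty$-cartesian $2$-faces), and Theorem~\ref{thm:higher_blakers_massey} then gives $\capX'$ is $(\sum_{i\in W}k_i)$-cartesian. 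Translating to $\SymSeq$ via Proposition~\ref{prop:comparing_cocartesian_and_cartesian_estimates_in_ModR} and comparing with the $\infty$-cocartesianness of $\capX$ in $\SymSeq$ yields the result in one stroke. In short: the filtration you want does not exist for these cubes, but higher Blakers--Massey, already in hand, does the job.
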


The last three results are all special cases of the following theorem which is a structured ring spectra analog of Goodwillie's higher dual Blakers-Massey theorem for spaces \cite[2.6]{Goodwillie_calc2}. This specializes to the higher dual homotopy excision result (Theorem \ref{thm:higher_dual_homotopy_excision}) in the special case that $\capX$ is strongly $\infty$-cartesian, and to Theorem~\ref{thm:dual_blakers_massey} in the case $|W| = 2$.

\begin{thm}[Higher dual Blakers-Massey theorem for structured ring spectra]
\label{thm:higher_dual_blakers_massey}
Let $\capO$ be an operad in $\capR$-modules and $W$ a nonempty finite set. Let $\capX$ be a $W$-cube of $\capO$-algebras (resp. left $\capO$-modules). Assume that $\capR,\capO,\capX_\emptyset$ are $(-1)$-connected, and suppose that
\begin{itemize}
\item[(i)] for each nonempty subset $V\subset W$, the $V$-cube $\partial_{W-V}^W\capX$ (formed by all maps in $\capX$ between $\capX_{W-V}$ and $\capX_W$) is $k_V$-cartesian,
\item[(ii)] $-1\leq k_{U}\leq k_V$ for each $U\subset V$.
\end{itemize}
Then $\capX$ is $k$-cocartesian, where $k$ is the minimum of $k_W+|W|-1$ and $|W|+\sum_{V\in\lambda}k_V$ over all partitions $\lambda$ of $W$ by nonempty sets not equal to $W$.
\end{thm}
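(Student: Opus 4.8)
The four dual theorems~\ref{thm:dual_homotopy_excision}, \ref{thm:dual_blakers_massey}, \ref{thm:higher_dual_homotopy_excision} and~\ref{thm:higher_dual_blakers_massey} form a single package: Theorem~\ref{thm:higher_dual_blakers_massey} specializes to Theorem~\ref{thm:higher_dual_homotopy_excision} for strongly $\infty$-cartesian $\capX$, and to Theorem~\ref{thm:dual_blakers_massey}, hence also Theorem~\ref{thm:dual_homotopy_excision}, for $|W|=2$, so it suffices to prove Theorem~\ref{thm:higher_dual_blakers_massey}. The plan is to run the same machine behind the non-dual higher Blakers--Massey Theorem~\ref{thm:higher_blakers_massey}, now aimed at a cocartesian conclusion, the key being a reduction to cubes of free $\capO$-algebras. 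Under the relevant cofibrancy conditions, $\capX$ is levelwise weakly equivalent to the geometric realization of the simplicial $W$-cube $\mathrm{Bar}(\capO,\capO,\capX)$, whose $q$-th simplicial degree is the $W$-cube $\capO\circ\capZ^{(q)}$ of free $\capO$-algebras (resp.\ free left $\capO$-modules) on the $W$-cube $\capZ^{(q)}=\capO^{\circ q}\capX$ of $\capR$-modules (resp.\ symmetric sequences) obtained by applying $\capO\circ(-)$ to the underlying cube of $\capX$ a total of $q$ times. Since geometric realization is a homotopy colimit it commutes with total homotopy cofiber and preserves connectivity of simplicial objects, so it suffices to bound, uniformly in $q$, how cocartesian the $W$-cube $\capO\circ\capZ^{(q)}$ is.

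Because $\capO\circ(-)$ is a left adjoint it commutes with homotopy colimits, and a short computation (using that if a map $g$ of $\capR$-modules is $m$-connected then so is $g^{\wedge t}$, and that $\capR,\capO$ are $(-1)$-connected) shows that the $W$-cube $\capO\circ\capZ^{(q)}$ is at least as cocartesian in $\AlgO$ (resp.\ $\LtO$) as the $W$-cube $\capZ^{(q)}$ is in the stable category $\ModR$ (resp.\ $\SymSeq$). So it remains to estimate the cocartesian-ness of $\capZ^{(q)}$ in $\ModR$ (resp.\ $\SymSeq$), uniformly in $q$. Using the recursion $\capZ^{(q)}=\capO\circ\capZ^{(q-1)}$, the identification $\capO\circ X\iso\coprod_{t\geq 0}\capO[\mathbf{t}]\wedge_{\Sigma_t}X^{\wedge t}$, and that total cofiber is additive and commutes with $\wedge_{\Sigma_t}$ and with $\capO[\mathbf{t}]\wedge(-)$, one obtains
\[
  \mathrm{tcofib}\bigl(\capO\circ\capZ^{(q-1)}\bigr)\ \simeq\ \coprod_{t\geq 0}\capO[\mathbf{t}]\wedge_{\Sigma_t}\,\mathrm{tcofib}\bigl((\capZ^{(q-1)})^{\wedge t}\bigr),
\]
where $\capY^{\wedge t}$ denotes the $W$-cube $V\mapsto\capY_V^{\wedge t}$ and the $t=0$ summand is a constant $W$-cube, hence contributes nothing. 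Unwinding the recursion down to $\capZ^{(0)}=\capX$, the summand with $t=1$ at every stage is $\capO[\mathbf{1}]^{\wedge q}$ smashed with the total homotopy cofiber of the underlying cube of $\capX$; since $\partial_\emptyset^W\capX=\capX$ is $k_W$-cartesian its total fiber is $(k_W-1)$-connected, and in the stable setting $\mathrm{tcofib}\simeq\Sigma^{|W|}\mathrm{tfib}$, so this summand is $(k_W+|W|-1)$-connected uniformly in $q$; this is the origin of the term $k_W+|W|-1$ in the bound.

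The remaining summands involve genuine smash powers $\capY^{\wedge t}$ with $t\geq 2$, and the whole argument now rests on a connectivity estimate for smash powers of cubes in the stable category. One identifies $\capY^{\wedge t}$ with the restriction, along the $t$-fold diagonal, of the external smash power $\capY^{\boxtimes t}$ (a cube indexed by $\mathbf{t}$ disjoint copies of $W$), uses that $\capY^{\boxtimes t}$ has total cofiber the $t$-fold smash product of the total cofibers of $\capY$, and analyzes the diagonal restriction. Collapsing the $\mathbf{t}$ external copies onto the diagonal apportions the directions of $W$ among the copies, and each partition $\lambda$ of $W$ records one such apportionment, a block $V\in\lambda$ corresponding to a cluster of copies jointly resolving only the $V$-directions; tracking this back through the bar resolution and feeding in hypotheses~(i)--(ii) --- with (ii) ensuring the various partial estimates are mutually consistent and reduce to the terminal faces $\partial_{W-V}^W\capX$ --- the block $V$ contributes a term governed by $k_V$, and assembling over $\lambda$ yields the terms $|W|+\sum_{V\in\lambda}k_V$. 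Altogether $\mathrm{tcofib}(\capZ^{(q)})$ is $k$-connected with $k=\min(k_W+|W|-1,\ |W|+\sum_{V\in\lambda}k_V)$, uniformly in $q$; pushing this back up through $\capO\circ(-)$ and geometric realization shows $\capX$ is $k$-cocartesian.

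The main obstacle is precisely this connectivity bookkeeping: establishing the smash-power-of-cubes estimate in $\ModR$ (resp.\ $\SymSeq$) with the sharp partition-indexed bound, and verifying that when only the cartesian-type hypotheses~(i)--(ii) on $\capX$ are available the iterated estimates for the cubes $\capZ^{(q)}=\capO^{\circ q}\capX$ assemble into exactly the minimum over partitions, uniformly in $q$ so that the estimate survives realization. As consistency checks on the formula: the partition into singletons gives $|W|+\sum_{i\in W}k_i$, recovering Theorem~\ref{thm:higher_dual_homotopy_excision} for strongly $\infty$-cartesian $\capX$; the case $|W|=2$ gives $\min(k_{12}+1,\ k_1+k_2+2)$, recovering Theorem~\ref{thm:dual_blakers_massey}; and specializing further to homotopy pullback squares recovers Theorem~\ref{thm:dual_homotopy_excision}.
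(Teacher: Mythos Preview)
Your approach is genuinely different from the paper's and contains a real gap. The paper never touches a bar resolution here: it proves Theorem~\ref{thm:higher_dual_blakers_massey} by a \emph{dual cubical induction argument} (Proposition~\ref{prop:dual_cubical_induction_argument}), modeled on Goodwillie's, which leverages the already-established higher dual homotopy excision (Theorem~\ref{thm:higher_dual_homotopy_excision}); an alternate proof in Remark~\ref{rem:alt_proof_of_higher_dual_blakers_massey} instead feeds the hypotheses directly into the non-dual higher Blakers--Massey Theorem~\ref{thm:higher_blakers_massey} via the comparison of Proposition~\ref{prop:comparing_cocartesian_and_cartesian_estimates_in_ModR}. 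Both routes are short and stay at the level of formal cube manipulations.

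Your route, by contrast, tries to reduce to an explicit connectivity estimate for $\mathrm{tcofib}(\capX^{\wedge t})$ in the stable category, and that estimate is exactly what is missing. Two concrete problems. First, the recursion does not unwind as you describe: after one step you face $(\capZ^{(q-1)})^{\wedge t}$ with $t\geq 2$, and since $(\capO\circ Y)^{\wedge t}\not\simeq \capO\circ(Y^{\wedge t})$, you cannot simply iterate your decomposition down to $\capZ^{(0)}=\capX$; you would need to track, at every stage, not just the total cofiber of $\capZ^{(q-1)}$ but the full collection of face-wise cartesian estimates analogous to (i)--(ii), and you give no mechanism for propagating those through $\capO\circ(-)$. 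Second, even at the base level you never prove the key claim that $\mathrm{tcofib}(\capX^{\wedge t})$ is $\bigl(|W|+\sum_{V\in\lambda}k_V\bigr)$-connected for the relevant partitions $\lambda$. The sentence about ``apportioning the directions of $W$ among the copies'' is suggestive but is not an argument: $\capX^{\wedge t}$ is the restriction of the external cube $\capX^{\boxtimes t}$ along the diagonal $\powerset(W)\to\powerset(W)^t$, and diagonal restriction of cubes has no simple relation to total cofiber. You yourself flag this as ``the main obstacle,'' and it is; without it the proof is a strategy outline, not a proof.
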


In this paper, the homotopy groups $\pi_*Y$ of a symmetric spectrum $Y$  denote the \emph{derived} homotopy groups (or true homotopy groups) \cite{Schwede_book_project, Schwede_homotopy_groups}; i.e., $\pi_*Y$ always denotes the homotopy groups of a stable fibrant replacement of $Y$, and hence of a flat stable fibrant replacement of $Y$. See Schwede \cite{Schwede_homotopy_groups} for several useful properties enjoyed by the true homotopy groups of a symmetric spectrum.

\begin{rem}[The chain complexes setting]
The main results in \cite{Harper_Bar} in the context of symmetric spectra are developed side-by-side with the corresponding results in the context of unbounded chain complexes over a commutative ring. The reader of \cite{Harper_Bar} will immediately observe that the proofs of the main results, Theorems \ref{thm:homotopy_excision} to \ref{thm:higher_dual_blakers_massey}, remain true without changes in the unbounded chain complex setting described in \cite{Harper_Bar}, provided that the operad is furthermore $\Sigma$-cofibrant and admissible.  In other words, the proofs in the chain complex setting are precisely identical, one simply replaces homotopy groups with homology groups.

Alternately, one can understand our results in the chain complexes setting by simply appealing to the work of Richter-Shipley \cite{Richter_Shipley}, Schwede-Shipley \cite{Schwede_Shipley_equivalences}, and Shipley \cite{Shipley_comm_ring} to establish zig-zags of Quillen equivalences between the category of $\capO$-algebras in chain complexes and the category of algebras over the corresponding operad in the symmetric spectra setting.
\end{rem}

\begin{rem}[The motivic setting]
It seems reasonable that one could adjust the arguments in this paper to the setting of $t$-structures of Beilinson-Bernstein-Deligne \cite{Beilinson_Berstein_Deligne}, with the idea that one could apply the resulting theorems to the motivic setting, with its variety of $t$-structures; the desirability of Goodwillie calculus \cite{Goodwillie_calc2} for motivic spectra has already been pointed out by Dundas-R{\"o}ndigs-{\O}stv{\ae}r \cite{Dundas_Rondigs_Ostvaer}. The results of the current paper would form part of the foundation for such a theory. It seems reasonable that one could work in the setting of a stable model category with an appropriately compatible symmetric monoidal structure and $t$-structure. Working out the precise conditions and details is beyond the scope of the current paper and will not be explored here.
\end{rem}

\begin{rem}[The $\infty$-categorical setting]
Since the positive flat stable model structure is precisely the condition that guarantees that the underlying $\infty$-category of the model category of $\capO$-algebras is equivalent to the $\infty$-category of (homotopy coherent) algebras over the nerve of $\capO$ defined in Lurie \cite{Lurie_higher_algebra}, it seems reasonable that one could adjust the arguments in this paper to the context of $\infty$-operads \cite{Lurie_higher_algebra}. This is beyond the scope of the current paper and will not be explored here.
\end{rem}

\subsection{Organization of the paper}

In Section \ref{sec:preliminaries} we recall some preliminaries on algebras and modules over operads. In Section \ref{sec:cubical_diagrams} we prove our main results. Much of the work is concerned with proving higher homotopy excision (Theorem~\ref{thm:higher_homotopy_excision}) which we obtain as a special case of a more general result, Theorem \ref{thm:pushout_cofibration_cube_homotopical_analysis}. We then use an induction argument due to Goodwillie to pass from this to the higher Blakers-Massey result (Theorem~\ref{thm:higher_blakers_massey}). We can then use higher Blakers-Massey to deduce, first, higher dual homotopy excision (Theorem~\ref{thm:higher_dual_homotopy_excision}) and then higher dual Blakers-Massey (Theorem~\ref{thm:higher_dual_blakers_massey}).

\subsection*{Acknowledgments}

The second author would like to thank Greg Arone, Kristine Bauer, Bjorn Dundas, Bill Dwyer, Brenda Johnson, Nick Kuhn, Ib Madsen, Jim McClure, and Donald Yau for useful remarks. The second author is grateful to Dmitri Pavlov and Jakob Scholbach for helpful comments that directly led to \cite{Harper_Spectra_Corrigendum}, and to Mark Behrens and Haynes Miller for a stimulating and enjoyable visit to the Department of Mathematics at the Massachusetts Institute of Technology in summer 2011, and for their invitation which made this possible. The authors would like to thank an anonymous referee for his or her suggestions and comments, which have resulted in a significant improvement. The first author was partially supported by National Science Foundation Grant DMS-1144149.

\section{Preliminaries}
\label{sec:preliminaries}

The purpose of this section is to recall various preliminaries on algebras and left modules over operads needed in this paper. Define the sets $\mathbf{n}:=\{1,\dots,n\}$ for each $n\geq 0$, where $\mathbf{0}:=\emptyset$ denotes the empty set. If $W$ is a finite set, we denote by $|W|$ the number of elements in $W$. For a more detailed development of the material in this section, see \cite{Harper_Spectra, Harper_Modules}.

\begin{defn}
\label{defn:symmetric_sequences}
Let $\M$ be a category with initial object $\emptyset$ and $n\geq 0$.
\begin{itemize}
\item $(\Sigma,\amalg,\mathbf{0})$ is the symmetric monoidal category of finite sets and their bijections; here, $\amalg$ denotes disjoint union of sets.
\item $(\ModR,\Smash,\capR)$ is the closed symmetric monoidal category of $\capR$-modules.
\item A \emph{symmetric sequence} in $\ModR$ (resp. $\M$) is a functor $\functor{A}{\Sigma^{\op}}{\ModR}$ (resp. $\functor{A}{\Sigma^{\op}}{\M}$). Denote by $\SymSeq$ the category of symmetric sequences in $\ModR$ and their natural transformations.
\item A symmetric sequence $A$ is \emph{concentrated at $n$} if $A[\mathbf{u}]=\emptyset$ for all $u\neq n$; here, $\emptyset$ denotes the initial object in $\ModR$ (resp. $\M$).
\end{itemize}
\end{defn}

\begin{defn} Let $A_1,\dotsc,A_t\in\SymSeq$. Their \emph{tensor product} $A_1\tensorcheck\dotsb\tensorcheck A_t\in\SymSeq$ is the left Kan extension of objectwise smash along coproduct of sets
\begin{align*}
\xymatrix{
  (\Sigma^{\op})^{\times t}
  \ar[rr]^-{A_1\times\dotsb\times A_t}\ar[d]^{\coprod} & &
  (\ModR)^{\times t}\ar[r]^-{\Smash} & \ModR \\
  \Sigma^{\op}\ar[rrr]^{A_1\tensorcheck\dotsb\tensorcheck
  A_t}_{\text{left Kan extension}} & & & \ModR
}
\end{align*}
\end{defn}

If $X$ is a finite set and $A$ is an object in $\ModR$, we use the usual dot notation $A\cdot X$ (see Mac Lane \cite{MacLane_categories} or \cite[2.3]{Harper_Modules}) to denote the copower $A\cdot X$ defined by
$
  A\cdot X := \coprod_X A
$,
the coproduct in $\ModR$ of $|X|$ copies of $A$. Recall the following useful calculations for tensor products.

\begin{prop}
Let $A_1,\dotsc,A_t\in\SymSeq$ and $U\in\Sigma$, with $u:=|U|$. There are natural isomorphisms
\begin{align}
  \notag
  (A_1\tensorcheck\dotsb\tensorcheck A_t)[U]&\Iso\
  \coprod_{\substack{\function{\pi}{U}{\mathbf{t}}\\ \text{in $\Set$}}}
  A_1[\pi^{-1}(1)]\Smash\dotsb\Smash
  A_t[\pi^{-1}(t)]\\
  \label{eq:tensor_check_calc}
  &\Iso
  \coprod_{u_1+\dotsb +u_t=u}A_1[\mathbf{u_1}]\Smash\dotsb\Smash
  A_t[\mathbf{u_t}]\underset{{\Sigma_{u_1}\times\dotsb\times
  \Sigma_{u_t}}}{\cdot}\Sigma_{u}
\end{align}
\end{prop}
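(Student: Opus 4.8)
The plan is to evaluate the left Kan extension defining $A_1\tensorcheck\dotsb\tensorcheck A_t$ by means of the pointwise (comma category) formula, and then to collapse the resulting colimit by exploiting that every indexing category that appears is a groupoid. As a warm-up one could check the case $t=2$ and bootstrap to general $t$ using associativity of $\tensorcheck$, but the direct computation below is cleaner.

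First I would recall that, for functors $\function{F}{\CC}{\Sigma^{\op}}$ and $\function{G}{\CC}{\ModR}$, the value at an object $d\in\Sigma^{\op}$ of the left Kan extension of $G$ along $F$ is the colimit of $G$ restricted to the comma category $F\downarrow d$, whose objects are pairs $(c,\ Fc\rarrow d)$. Applying this with $F$ the coproduct functor $(\Sigma^{\op})^{\times t}\rarrow\Sigma^{\op}$ and $G$ the objectwise smash product $(R_1,\dotsc,R_t)\mapsto A_1[R_1]\Smash\dotsb\Smash A_t[R_t]$, we find that $(A_1\tensorcheck\dotsb\tensorcheck A_t)[R]$ is the colimit, over the category $\JJ_R$ whose objects are tuples of finite sets $R_1,\dotsc,R_t$ equipped with a bijection $\phi\colon R\iso R_1\amalg\dotsb\amalg R_t$ (which is what a morphism $R_1\amalg\dotsb\amalg R_t\rarrow R$ in $\Sigma^{\op}$ unwinds to) and whose morphisms are tuples of bijections compatible with the $\phi$'s, of the functor $(R_1,\dotsc,R_t,\phi)\mapsto A_1[R_1]\Smash\dotsb\Smash A_t[R_t]$.

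Next I would analyze $\JJ_R$. It is a groupoid, and each object $(R_1,\dotsc,R_t,\phi)$ determines a function $\function{\pi}{R}{\mathbf{t}}$ by $\pi(x)=j$ when $\phi(x)\in R_j$. Every morphism of $\JJ_R$ preserves this function, so $\JJ_R$ is the disjoint union, over all functions $\function{\pi}{R}{\mathbf{t}}$, of the full subgroupoid $\JJ_R^\pi$ on the objects with that prescribed $\pi$; and for fixed $\pi$ any two objects of $\JJ_R^\pi$ are connected by a \emph{unique} isomorphism, since the component bijections are forced by the commuting triangle. Hence each $\JJ_R^\pi$ is equivalent to the terminal category, with distinguished object $\bigl((\pi^{-1}(j))_{j},\id_R\bigr)$. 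Since a colimit over a disjoint union of categories is the coproduct of the colimits, and a colimit over a category equivalent to the terminal one is the value of the functor at any object, this yields the first isomorphism; naturality in $R\in\Sigma$ is automatic because the whole construction is functorial in $R$. For the second isomorphism I would regroup the functions $\function{\pi}{R}{\mathbf{t}}$ according to the numbers $r_j:=|\pi^{-1}(j)|$, which satisfy $r_1+\dotsb+r_t=r$. Fixing a bijection $\mathbf{r}\iso R$ and the standard ordered partition $\mathbf{r}=\mathbf{r_1}\amalg\dotsb\amalg\mathbf{r_t}$, the set of those $\pi$ with $|\pi^{-1}(j)|=r_j$ for all $j$ is identified with the coset set $\Sigma_r/(\Sigma_{r_1}\times\dotsb\times\Sigma_{r_t})$, compatibly with the $\Sigma_r$-action coming from naturality in $R$, and under the same bijection one has $A_j[\pi^{-1}(j)]\iso A_j[\mathbf{r_j}]$. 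Assembling these identifications shows that the summand of the first coproduct indexed by such $\pi$ is precisely $A_1[\mathbf{r_1}]\Smash\dotsb\Smash A_t[\mathbf{r_t}]\underset{\Sigma_{r_1}\times\dotsb\times\Sigma_{r_t}}{\cdot}\Sigma_r$, and summing over the multi-indices $(r_1,\dotsc,r_t)$ with $\sum_j r_j=r$ gives the stated formula.

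I expect the only real difficulty to be bookkeeping rather than anything conceptual: correctly unwinding the variance in $\Sigma^{\op}$ when identifying the objects and morphisms of $\JJ_R$, verifying that two objects over the same $\pi$ are related by a \emph{unique} isomorphism (so that each component of $\JJ_R$ is contractible and the colimit genuinely reduces to a coproduct), and checking that the regrouping in the last step is equivariant for the $\Sigma_r$-action, so that the second isomorphism is natural in $R$.
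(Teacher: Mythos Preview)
The paper does not actually prove this proposition; it is stated without proof as a recall of standard calculations (the sentence introducing it reads ``Recall the following useful calculations for tensor products''), with a reference to \cite{Harper_Spectra, Harper_Modules} for a fuller development. Your argument is correct and is exactly the standard way one verifies such a formula: apply the pointwise colimit formula for the left Kan extension, observe that the resulting comma category is a groupoid whose connected components are contractible and are indexed by the functions $\function{\pi}{R}{\mathbf{t}}$, and then regroup the summands by the cardinalities $r_j=|\pi^{-1}(j)|$ to obtain the induced-up description \eqref{eq:tensor_check_calc}. There is nothing to compare against in the paper itself, and nothing in your write-up needs fixing.
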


Here, $\Set$ is the category of sets and their maps, and \eqref{eq:tensor_check_calc} displays the tensor product $(A_1\tensorcheck\dotsb\tensorcheck A_t)[U]$ as a coproduct of $\Sigma_{u_1}\times\dotsb\times\Sigma_{u_t}$-orbits. It will be conceptually useful to extend the definition of tensor powers $A^{\tensorcheck t}$ to situations in which the integers $t$ are replaced by a finite set $T$.

\begin{defn}
Let $A\in\SymSeq$ and $T,U\in\Sigma$. The \emph{tensor powers} $A^{\tensorcheck T}\in\SymSeq$ are defined objectwise by
\begin{align*}
  (A^{\tensorcheck\emptyset})[U]:=
  \coprod_{\substack{\function{\pi}{U}{\emptyset}\\ \text{in $\Set$}}}
  \capR,\quad\quad
  &(A^{\tensorcheck T})[U]:=
  \coprod_{\substack{\function{\pi}{U}{T}\\ \text{in $\Set$}}}
  \bigwedge_{t\in T} A[\pi^{-1}(t)]\quad(T\neq\emptyset).
\end{align*}
Note that there are no functions $\function{\pi}{U}{\emptyset}$ in $\Set$ unless $U=\emptyset$. We will use the abbreviation $A^{\tensorcheck 0}:=A^{\tensorcheck\emptyset}$. We use the convention that $X^{\wedge 0}:=\capR$ denotes the unit in $(\ModR,\Smash,\capR)$ for any $\capR$-module $X$.
\end{defn}

\begin{defn}\label{defn:circle_product}
Let $A,B,C\in\SymSeq$, and $t,u\geq 0$. The \emph{circle product} (or composition product or substitution product) $A\circ B\in\SymSeq$ is defined objectwise by the coend
\begin{align}
  \label{eq:circle_product_calc}
  (A\circ B)[\mathbf{u}] := A\Smash_\Sigma (B^{\tensorcheck-})[\mathbf{u}]
  &\Iso
  \coprod_{t\geq 0}A[\mathbf{t}]\Smash_{\Sigma_t}
  (B^{\tensorcheck t})[\mathbf{u}].
\end{align}
\end{defn}

\begin{prop}
\label{prop:closed_monoidal_on_symmetric_sequences}
\
\begin{itemize}
\item [(a)] $(\SymSeq,\tensorcheck,1)$ has the structure of a closed symmetric monoidal category with all small limits and colimits. The unit for $\tensorcheck$ denoted ``$1$'' is the symmetric sequence concentrated at $0$ with value $\capR$.
\item [(b)] $(\SymSeq,\circ,I)$ has the structure of a closed monoidal category with all small limits and colimits. The unit for $\circ$ denoted ``$I$'' is the symmetric sequence concentrated at $1$ with value $\capR$. Circle product is not symmetric.
\end{itemize}
\end{prop}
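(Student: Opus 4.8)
The plan is to verify each structural claim by reducing to standard facts about functor categories and the pointwise/Kan-extension descriptions of $\tensorcheck$ and $\circ$ recalled above. Since $\M = \ModR$ is bicomplete (being a category of modules over a commutative monoid in the bicomplete category $\Spectra$), the diagram category $\SymSeq = \mathrm{Fun}(\Sigma^{\op},\ModR)$ is bicomplete as well, with all limits and colimits computed objectwise; this gives the ``all small limits and colimits'' clauses in both (a) and (b) at once. So the real content is the (closed) monoidal structure in each case.

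For part (a), I would first observe that $\tensorcheck$, defined as a left Kan extension of the objectwise smash along $\coprod\colon(\Sigma^{\op})^{\times t}\rarrow\Sigma^{\op}$, is nothing but the Day convolution product on $\mathrm{Fun}(\Sigma^{\op},\ModR)$ associated to the symmetric monoidal structure $(\Sigma,\sqcup,\emptyset)$ on the indexing category and $(\ModR,\Smash,\capR)$ on the target. By the general theory of Day convolution, this product is automatically associative, symmetric, and unital, with unit the representable-type functor $\Sigma(-,\emptyset)\cdot\capR$, which is precisely the symmetric sequence concentrated at $0$ with value $\capR$; closedness (existence of the internal hom) follows because $\ModR$ is itself closed and complete, so the Day convolution is closed. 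Alternatively, one can avoid citing Day convolution and instead use the explicit coproduct formula \eqref{eq:tensor_check_calc}: associativity, symmetry, and the unit axiom can each be checked by a direct comparison of the coproducts indexed by functions $R\to\mathbf t$ (or by ordered partitions $r_1+\dots+r_t=r$), using that $\Smash$ on $\ModR$ is itself associative, symmetric, and unital, and that these data are compatible with the $\Sigma_r$-orbit identifications; the coherence isomorphisms are inherited from those of $(\ModR,\Smash,\capR)$. I would present the Day-convolution argument as the main line and remark that the formulas make it concrete.

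For part (b), the circle product $A\circ B = A\Smash_\Sigma (B^{\tensorcheck -})$ is the standard composition product on symmetric sequences, and I would identify it with the monoidal structure transported from the substitution/plethystic structure: the assignment $B\mapsto (B^{\tensorcheck -})$ underlies a functor making $\SymSeq$ act on itself, and $A\circ(-)$ is the left adjoint one expects. Concretely, associativity $(A\circ B)\circ C\iso A\circ(B\circ C)$ follows from the natural isomorphism $(B\circ C)^{\tensorcheck t}\iso (B^{\tensorcheck -})\Smash_\Sigma \cdots$ expressing tensor powers of a circle product, together with the fact that $\tensorcheck$ distributes over the coends defining $\circ$ (both being colimits, and $\Smash$ preserving colimits in each variable since $\ModR$ is closed). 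Unitality on the right, $A\circ I\iso A$, is immediate from $I^{\tensorcheck t}$ being concentrated at $t$ with value $\capR$ together with the coend formula; unitality on the left, $I\circ B\iso B$, follows since $I[\mathbf t]=\emptyset$ for $t\neq 1$ and $I[\mathbf 1]=\capR$, collapsing the coproduct in \eqref{eq:circle_product_calc} to the $t=1$ term. For closedness of $(\SymSeq,\circ,I)$ one shows $A\circ(-)$ preserves all small colimits (it is built from $\Smash_{\Sigma_t}$ and coproducts, all of which are colimits and hence preserved, using that $\Smash$ on $\ModR$ preserves colimits in each variable), and then invokes the adjoint functor theorem (or the explicit construction in \cite{Harper_Spectra, Harper_Modules}) to produce the right adjoint; the non-symmetry of $\circ$ is the well-known observation that $A\circ B$ and $B\circ A$ generally differ, witnessed already by sequences concentrated in a single degree.

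The main obstacle is bookkeeping rather than conceptual: carefully matching the coend/Kan-extension definitions with the explicit coproduct formulas and checking that the coherence isomorphisms for $\tensorcheck$ and $\circ$ are genuinely inherited from $(\ModR,\Smash,\capR)$ in a compatible way, especially the interaction of the $\Sigma_r$-orbit identifications in \eqref{eq:tensor_check_calc} with associativity of $\circ$. I would handle this by working functorially (Day convolution for $\tensorcheck$, and the monoid-of-endofunctors viewpoint for $\circ$) so that coherence is automatic, and cite \cite{Harper_Spectra, Harper_Modules} for the detailed verifications, since these are entirely standard for symmetric sequences and are not the novel part of the paper.
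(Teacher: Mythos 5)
The paper does not actually prove this proposition; it is recalled from the cited references (\cite{Harper_Spectra, Harper_Modules}), so there is no internal argument to compare against. Your outline for part (a) is the standard one and is fine: objectwise (co)limits give bicompleteness, $\tensorcheck$ is Day convolution for $(\Sigma,\sqcup)$ and $(\ModR,\Smash,\capR)$, and closedness and coherence come for free from that formalism. Your treatment of the unit isomorphisms for $\circ$ is essentially right, modulo a small slip: $(I^{\tensorcheck t})[\mathbf{t}]$ is not $\capR$ with trivial action but $\capR\cdot\Sigma_t$ (the coproduct is over bijections $\mathbf{r}\rarrow\mathbf{t}$), and it is precisely this freeness that makes $A[\mathbf{t}]\Smash_{\Sigma_t}(I^{\tensorcheck t})[\mathbf{t}]\iso A[\mathbf{t}]$ rather than an orbit object.

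There is, however, one genuine error in part (b): you claim that $A\circ(-)$ preserves all small colimits and deduce closedness by finding a right adjoint to that functor. This is false. The functor $B\mapsto B^{\tensorcheck t}$ does not preserve coproducts or pushouts, so $A\circ(-)$ fails to preserve general colimits; indeed, if it did, the entire filtration machinery of this paper (Propositions \ref{prop:coproduct_modules}--\ref{prop:filtering_OA}, and the fact recorded in Proposition \ref{prop:OA_commutes_with_certain_colimits} that $\capO\circ(-)$-type constructions preserve only reflexive coequalizers and filtered colimits) would be pointless, since $\capO\circ(-)$ is exactly a functor of this shape. The closed structure on $(\SymSeq,\circ,I)$ lives on the other side: for fixed $B$, the functor $(-)\circ B$ is colimit-preserving (the coend \eqref{eq:circle_product_calc} is a colimit construction in the left variable, and $\Smash$ preserves colimits in each variable), and it is $(-)\circ B$ that admits a right adjoint $\Map^{\circ}(B,-)$, giving $\hom(A\circ B,C)\iso\hom(A,\Map^{\circ}(B,C))$; the monoidal structure is one-sided closed, not biclosed. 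So the statement you need to prove, and the adjoint-functor argument you invoke, should be formulated for $(-)\circ B$; as written for $A\circ(-)$ the step fails and cannot be repaired, since that functor genuinely has no right adjoint.
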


\begin{defn}
\label{defn:hat_construction_embed_at_zero}
Let $Z\in\ModR$. Define $\hat{Z}\in\SymSeq$ to be the symmetric sequence concentrated at $0$ with value $Z$.
\end{defn}

The functor $\function{\hat{-}}{\ModR}{\SymSeq}$ fits into the adjunction
\begin{align*}
\xymatrix{
  \ModR\ar@<0.5ex>[r]^-{\hat{-}} &
  \SymSeq\ar@<0.5ex>[l]^-{\Ev_0}
}
\end{align*}
with left adjoint on top and $\Ev_0$ the \emph{evaluation} functor defined objectwise by $\Ev_0(B):=B[\mathbf{0}]$. Note that $\hat{-}$ embeds $\ModR$ in $\SymSeq$ as the full subcategory of symmetric sequences concentrated at $0$.

\begin{defn}\label{defn:corresponding_functor}
Let $\capO$ be a symmetric sequence and $Z\in\ModR$. The corresponding functor $\functor{\capO}{\ModR}{\ModR}$ is defined objectwise by
$
  \capO(Z):=\capO\circ(Z):=\amalg_{t\geq 0}\capO[\mathbf{t}]
  \Smash_{\Sigma_t}Z^{\wedge t}.
$
\end{defn}

\begin{prop}
Let $\capO,A\in\SymSeq$ and $Z\in\ModR$. There are natural isomorphisms
\begin{align}
\label{eq:circ_product_and_evaluate_at_zero}
  \widehat{\capO(Z)}=
  \widehat{\capO\circ(Z)}\Iso\capO\circ\hat{Z},\quad\quad
  \Ev_0(\capO\circ A)\Iso \capO\circ\bigl(\Ev_0(A)\bigr).
\end{align}
\end{prop}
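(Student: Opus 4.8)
The plan is to establish the two natural isomorphisms in \eqref{eq:circ_product_and_evaluate_at_zero} by unwinding the definition of the circle product \eqref{eq:circle_product_calc} together with the defining formula for the tensor powers $A^{\tensorcheck t}$, and then identifying the resulting coproducts with the stated targets. The key observation driving both isomorphisms is the interaction between the tensor powers $(-)^{\tensorcheck t}$ and evaluation at $\mathbf{0}$: namely, for any $B\in\SymSeq$ one has $(B^{\tensorcheck t})[\mathbf{0}]\iso (B[\mathbf{0}])^{\wedge t}$, because the only function $\function{\pi}{\mathbf{0}}{\mathbf{t}}$ is the empty function, which forces each $\pi^{-1}(i)=\emptyset$, so that $\bigwedge_{i\in\mathbf{t}}B[\pi^{-1}(i)]=B[\mathbf{0}]^{\wedge t}$; and for $t=0$ the value is the unit $S$ of $(\Spectra,\tensor_S,S)$, matching the convention $Z^{\wedge 0}=\capR$ in $(\ModR,\Smash,\capR)$ once one recalls that the monoidal unit of $\ModR$ is $\capR$ and $\hat{\capR}$ evaluated appropriately gives $S$ at the spectrum level; this is exactly the $Z=\capR$ instance one needs and is otherwise just the statement that $\Ev_0$ is strong monoidal for $\tensorcheck$.

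First I would prove the second isomorphism $\Ev_0(\capO\circ A)\iso \capO\circ(\Ev_0 A)$. Applying $\Ev_0=(-)[\mathbf{0}]$ to \eqref{eq:circle_product_calc} with $\mathbf r=\mathbf 0$ gives
\begin{align*}
  \Ev_0(\capO\circ A) \iso \coprod_{t\geq 0}\capO[\mathbf t]\Smash_{\Sigma_t}(A^{\tensorcheck t})[\mathbf 0].
\end{align*}
Now substitute the identification $(A^{\tensorcheck t})[\mathbf 0]\iso (A[\mathbf 0])^{\wedge t}=(\Ev_0 A)^{\wedge t}$ just discussed, and compare with Definition~\ref{defn:corresponding_functor}, which says precisely $\capO\circ(\Ev_0 A)=\amalg_{t\geq 0}\capO[\mathbf t]\Smash_{\Sigma_t}(\Ev_0 A)^{\wedge t}$. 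One should check that the $\Sigma_t$-actions on both sides agree under these identifications — the permutation action on $(A^{\tensorcheck t})[\mathbf 0]$ coming from permuting the fibers $\pi^{-1}(i)$ reduces, when all fibers are empty, to the symmetric-monoidal symmetry on $(A[\mathbf 0])^{\wedge t}$ — and that the isomorphism is natural in $\capO$ and $A$, which is routine since every step is a composite of natural isomorphisms (left Kan extension along coproduct, the coend defining $\circ$, and the symmetry isomorphisms of $\Smash$).

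The first isomorphism $\widehat{\capO(Z)}=\widehat{\capO\circ(Z)}\iso\capO\circ\hat Z$ then follows either by a parallel direct computation or, more cleanly, as a formal consequence of the second together with the adjunction $(\hat{-},\Ev_0)$ recorded after Definition~\ref{defn:hat_construction_embed_at_zero}: since $\hat{-}$ identifies $\ModR$ with the full subcategory of symmetric sequences concentrated at $\mathbf 0$, it suffices to check that $\capO\circ\hat Z$ is again concentrated at $\mathbf 0$ with value $\capO(Z)$, i.e. that $(\capO\circ\hat Z)[\mathbf r]=\emptyset$ for $r\neq 0$ and $(\capO\circ\hat Z)[\mathbf 0]\iso\capO(Z)$. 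For the vanishing, note $(\hat Z^{\tensorcheck t})[\mathbf r]$ is a coproduct over functions $\function{\pi}{\mathbf r}{\mathbf t}$ of $\bigwedge_{i}\hat Z[\pi^{-1}(i)]$, and $\hat Z[\mathbf s]=\emptyset$ (the initial object, an absorbing element for $\Smash$) unless $s=0$, which forces every fiber $\pi^{-1}(i)$ empty, hence $\mathbf r=\emptyset$; plugging this into \eqref{eq:circle_product_calc} kills all terms when $r\neq 0$. For the value at $\mathbf 0$, apply the second isomorphism with $A=\hat Z$ and use $\Ev_0\hat Z\iso Z$ to get $(\capO\circ\hat Z)[\mathbf 0]=\Ev_0(\capO\circ\hat Z)\iso\capO\circ(\Ev_0\hat Z)=\capO\circ(Z)=\capO(Z)$.

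I do not anticipate a serious obstacle here; the statement is essentially a bookkeeping lemma. The one point requiring care — and the closest thing to a main difficulty — is the handling of the empty tensor power and the unit objects: one must correctly match the convention $(A^{\tensorcheck\emptyset})[R]=\amalg_{\function{\pi}{R}{\emptyset}}S$ (nonempty only when $R=\emptyset$, with value the sphere spectrum $S$) against the convention $Z^{\wedge 0}=\capR$ in Definition~\ref{defn:corresponding_functor}, and to track through the identification of $\capR$ as the monoidal unit of $\ModR$ with the way $\hat{-}$ and $\Ev_0$ interact with these units. Once the $t=0$ term is pinned down, the remaining terms ($t\geq 1$) are a direct translation via the empty-function argument above, and naturality is automatic.
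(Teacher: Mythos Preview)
Your proposal is correct and follows essentially the same approach as the paper: the paper's proof is the single line ``This follows from \eqref{eq:circle_product_calc} and \eqref{eq:tensor_check_calc},'' and what you have written is precisely the unwinding of those formulas, with the empty-function observation $(B^{\tensorcheck t})[\mathbf 0]\iso B[\mathbf 0]^{\wedge t}$ as the key computational input. Your extra care with the $t=0$ term and the unit conventions is appropriate (and catches a notational wrinkle in the paper's use of $S$ there), but there is nothing beyond bookkeeping going on in either argument.
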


\begin{proof}
This follows from \eqref{eq:circle_product_calc} and \eqref{eq:tensor_check_calc}.
\end{proof}

\begin{defn}
\label{defn:operad}
An \emph{operad} in $\capR$-modules is a monoid object in $(\SymSeq,\circ,I)$ and a \emph{morphism of operads} is a morphism of monoid objects in $(\SymSeq,\circ,I)$.
\end{defn}

\begin{rem} If $\capO$ is an operad, then the associated functor $\function{\capO}{\ModR}{\ModR}$ inherits the structure of a monad.
\end{rem}

\begin{defn}
\label{defn:algebras_and_modules}
Let $\capO$ be an operad in $\capR$-modules.
\begin{itemize}
\item A \emph{left $\capO$-module} is an object in $(\SymSeq,\circ,I)$ with a left action of $\capO$ and a \emph{morphism of left $\capO$-modules} is a map that respects the left $\capO$-module structure. Denote by $\LtO$ the category of left $\capO$-modules and their morphisms.
\item An \emph{$\capO$-algebra} is an algebra for the monad $\functor{\capO}{\ModR}{\ModR}$ and a \emph{morphism of $\capO$-algebras} is a map in $\ModR$ that respects the $\capO$-algebra structure. Denote by $\AlgO$ the category of $\capO$-algebras and their morphisms.
\end{itemize}
\end{defn}

It follows easily from \eqref{eq:circ_product_and_evaluate_at_zero} that an $\capO$-algebra is the same as an $\capR$-module $Z$ with a left $\capO$-module structure on $\hat{Z}$, and if $Z$ and $Z'$ are $\capO$-algebras, then a morphism of $\capO$-algebras is the same as a map $\function{f}{Z}{Z'}$ in $\ModR$ such that $\function{\hat{f}}{\hat{Z}}{\hat{Z'}}$ is a morphism of left $\capO$-modules. In other words, an algebra over an operad $\capO$ is the same as a left $\capO$-module that is concentrated at $0$, and $\AlgO$ embeds in $\LtO$ as the full subcategory of left $\capO$-modules concentrated at $0$, via the functor $\function{\hat{-}}{\AlgO}{\LtO}$, $Z\longmapsto \hat{Z}$. Define the \emph{evaluation} functor $\function{\Ev_0}{\LtO}{\AlgO}$ objectwise by $\Ev_0(B):=B[\mathbf{0}]$.

\begin{prop}
\label{prop:basic_properties_LTO}
Let $\capO$ be an operad in $\capR$-modules. There are adjunctions
\begin{align}
\label{eq:free_forgetful_adjunction}
\xymatrix{
  \ModR\ar@<0.5ex>[r]^-{\capO\circ(-)} & \AlgO,\ar@<0.5ex>[l]^-{U}
}\quad\quad
\xymatrix{
  \SymSeq\ar@<0.5ex>[r]^-{\capO\circ-} & \LtO,\ar@<0.5ex>[l]^-{U}
}\quad\quad
\xymatrix{
  \AlgO\ar@<0.5ex>[r]^-{\hat{-}} & \LtO,\ar@<0.5ex>[l]^-{\Ev_0}
}
\end{align}
with left adjoints on top and $U$ the forgetful functor. All small colimits exist in $\AlgO$ and $\LtO$, and both reflexive coequalizers and filtered colimits are created by the forgetful functors. All small limits exist in $\AlgO$ and $\LtO$, and are created by the forgetful functors.
\end{prop}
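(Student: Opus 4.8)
The plan is to deduce everything from standard facts about categories of algebras over a monad. Indeed, $\AlgO$ is by definition the category of algebras for the monad $\function{\capO}{\ModR}{\ModR}$ (see the Remark following Definition~\ref{defn:operad}), and a left $\capO$-module is exactly an algebra for the monad $\function{\capO\circ(-)}{\SymSeq}{\SymSeq}$, whose multiplication and unit are induced by the operad structure maps of $\capO$ and the monoidal structure of $(\SymSeq,\circ,I)$. Granting this, the first two adjunctions in \eqref{eq:free_forgetful_adjunction} are precisely the free--forgetful adjunctions attached to these monads, so there is nothing more to do. For the third adjunction I would use that $\function{\hat{-}}{\AlgO}{\LtO}$ is fully faithful, with essential image the left $\capO$-modules concentrated at $0$ (as explained after Definition~\ref{defn:algebras_and_modules}), and exhibit $\Ev_0$ as its right adjoint: the unit $Z\to\Ev_0\hat{Z}$ is the identity, and the counit $\widehat{\Ev_0 B}\to B$ is taken to be the counit of the underlying adjunction $\function{\hat{-}}{\ModR}{\SymSeq}$, $\function{\Ev_0}{\SymSeq}{\ModR}$, i.e.\ the map of symmetric sequences that is an isomorphism in degree $0$ and has initial source in positive degrees. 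The one non-formal point is to check that this counit respects left $\capO$-module structures; I would do this by applying $\Ev_0$ to the relevant compatibility square and using the natural isomorphisms \eqref{eq:circ_product_and_evaluate_at_zero} to identify it in degree $0$ with the square expressing that the $\capO$-algebra structure on $B[\mathbf{0}]=\Ev_0 B$ is the one induced from the left $\capO$-module structure on $B$, the square being trivial in positive degrees since its source is then initial. The triangle identities then follow from those of the underlying adjunction.

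For limits I would invoke the standard fact that the forgetful functor from the algebras of a monad on a category $\mathsf{C}$ creates all small limits that exist in $\mathsf{C}$. Since $\ModR$ is complete (being a category of modules over a commutative monoid in the complete category $\Spectra$) and $\SymSeq$ is complete by Proposition~\ref{prop:closed_monoidal_on_symmetric_sequences}, this immediately yields that $\AlgO$ and $\LtO$ have all small limits, created and hence preserved by the forgetful functors.

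For colimits the plan is to proceed in two steps. First, the forgetful functors create filtered colimits and reflexive coequalizers; by the usual monadicity criterion this reduces to checking that the monads $\capO\circ(-)$ preserve filtered colimits and reflexive coequalizers. Reading off the formulas \eqref{eq:circle_product_calc} and \eqref{eq:tensor_check_calc}, each such monad is built from coproducts $\coprod_t$, orbits $(-)_{\Sigma_t}$, and the smash and tensor powers $Z\mapsto Z^{\wedge t}$, $A\mapsto A^{\tensorcheck t}$. Coproducts and orbits are colimits, hence commute with all colimits; and the smash (resp.\ tensor) powers preserve filtered colimits and reflexive coequalizers because $\Smash$ (resp.\ $\tensorcheck$) preserves colimits in each variable separately, being part of a closed symmetric monoidal structure, so that the $t$-fold power preserves filtered colimits (finality of the diagonal $\mathsf{D}\to\mathsf{D}^{\times t}$ for $\mathsf{D}$ filtered) and reflexive coequalizers (the standard fact that a functor of the form $Z\mapsto Z^{\otimes t}$ preserves reflexive coequalizers when $\otimes$ does so in each variable). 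Second, with the free--forgetful adjunctions in hand, together with reflexive coequalizers in $\AlgO$ and $\LtO$ and the cocompleteness of $\ModR$ and $\SymSeq$, I would build arbitrary small colimits by the usual recipe, presenting the colimit of a diagram of $\capO$-algebras (resp.\ left $\capO$-modules) as a reflexive coequalizer of free objects on colimits formed in the underlying category. The step I expect to be the main obstacle is the first one---carefully verifying that the smash and tensor powers commute with reflexive coequalizers---since the rest is formal once the monadic framework is in place.
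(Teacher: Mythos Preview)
Your proposal is correct and is exactly the standard argument. Note, however, that the paper does not actually give a proof of this proposition: it is stated in the preliminaries section as a known result, with the reader referred to \cite{Harper_Spectra, Harper_Modules} for details. Your sketch is essentially what one finds in those references (and in the general monad literature): identify $\AlgO$ and $\LtO$ as categories of algebras over the monads $\capO\circ(-)$ on $\ModR$ and $\SymSeq$, read off the free--forgetful adjunctions, create limits by general monadicity, verify that the monads preserve filtered colimits and reflexive coequalizers (the only nontrivial ingredient being, as you correctly flag, that $t$-fold smash/tensor powers preserve reflexive coequalizers when the tensor preserves colimits in each variable), and then build arbitrary colimits as reflexive coequalizers of free objects. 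The third adjunction is handled exactly as you describe, using \eqref{eq:circ_product_and_evaluate_at_zero}. There is nothing to compare here beyond noting that you have supplied the proof the paper omits.
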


Throughout this paper, we use the following model structures on the categories of $\capO$-algebras and left $\capO$-modules.

\begin{defn}
\label{defn:stable_flat_positive_model_structures}
Let $\capO$ be an operad in $\capR$-modules. The \emph{positive flat stable model structure} on $\AlgO$ (resp. $\LtO$) has as weak equivalences the stable equivalences (resp. objectwise stable equivalences) and  as fibrations the positive flat stable fibrations (resp. objectwise positive flat stable fibrations).
\end{defn}

The model structures in Definition \ref{defn:stable_flat_positive_model_structures} are established in \cite{Harper_Spectra, Harper_Spectra_Corrigendum, Harper_Hess}. For a description of the cofibrations, see \cite[Section 4]{Harper_Spectra} and \cite[Section 7]{Harper_Hess}. For ease of notation, we have followed Schwede \cite{Schwede_book_project} in using the term \emph{flat} (e.g., flat stable model structure) for what is called $S$ (e.g., stable $S$-model structure) in \cite{Hovey_Shipley_Smith, Schwede, Shipley_comm_ring}. For some of the good properties of the flat stable model structure, see \cite[5.3.7 and 5.3.10]{Hovey_Shipley_Smith}.

\section{Homotopical Analysis of Cubical Diagrams}
\label{sec:cubical_diagrams}

In this section we prove the main results of the paper. The following definitions and constructions appear in Goodwillie \cite{Goodwillie_calc2} in the context of spaces, and will also be useful in our context of structured ring spectra.

\begin{defn}[Indexing categories for cubical diagrams]
Let $W$ be a finite set and $\M$ a category.
\begin{itemize}
\item Denote by $\powerset(W)$ the poset of all subsets of $W$, ordered by inclusion $\subset$ of sets. We will often regard $\powerset(W)$ as the category associated to this partial order in the usual way; the objects are the elements of $\powerset(W)$, and there is a morphism $U\rarrow V$ if and only if $U\subset V$.
\item Denote by $\powerset_0(W)\subset\powerset(W)$ the poset of all nonempty subsets of $W$; it is the full subcategory of $\powerset(W)$ containing all objects except the initial object $\emptyset$.
\item Denote by $\powerset_1(W)\subset\powerset(W)$ the poset of all subsets of $W$ not equal to $W$; it is the full subcategory of $\powerset(W)$ containing all objects except the terminal object $W$.
\item A \emph{$W$-cube} $\capX$ in $\M$ is a $\powerset(W)$-shaped diagram $\capX$ in $\M$; in other words, a functor $\function{\capX}{\powerset(W)}{\M}$.
\end{itemize}
\end{defn}

\begin{rem}
If $n=|W|$ and $\capX$ is a $W$-cube in $\M$, we will sometimes refer to $\capX$ simply as an \emph{$n$-cube} in $\M$. In particular, a $0$-cube is an object in $\M$ and a $1$-cube is a morphism in $\M$.
\end{rem}

\begin{defn}[Faces of cubical diagrams]
Let $W$ be a finite set and $\M$ a category. Let $\capX$ be a $W$-cube in $\M$ and consider any subsets $U\subset V\subset W$. Denote by $\partial_U^V\capX$ the $(V-U)$-cube defined objectwise by
\begin{align*}
  T\mapsto(\partial_U^V\capX)_T:=\capX_{T\cup U},\quad\quad T\subset V-U.
\end{align*}
In other words, $\partial_U^V\capX$ is the $(V-U)$-cube formed by all maps in $\capX$ between $\capX_U$ and $\capX_V$. We say that $\partial_U^V\capX$ is a \emph{face} of $\capX$ of \emph{dimension} $|V-U|$.
\end{defn}

\begin{defn}
\label{defn:cofibration_cubes_etc}
Let $\capO$ be an operad in $\capR$-modules and $W$ a finite set. Let $\capX$ be a $W$-cube in $\AlgO$ (resp. $\LtO$) or $\ModR$ (resp. $\SymSeq$) and $k\in\ZZ$.
\begin{itemize}
\item $\capX$ is a \emph{cofibration cube} if the map
$\colim_{\powerset_1(V)}\capX\rarrow\colim_{\powerset(V)}\capX\Iso\capX_V$ is a cofibration for each $V\subset W$; in particular, each $\capX_V$ is cofibrant.
\item $\capX$ is \emph{$k$-cocartesian} if the map
$\hocolim_{\powerset_1(W)}\capX\rarrow\hocolim_{\powerset(W)}\capX\wequiv\capX_W$ is $k$-connected.
\item $\capX$ is \emph{$\infty$-cocartesian} if the map
$\hocolim_{\powerset_1(W)}\capX\rarrow\hocolim_{\powerset(W)}\capX\wequiv\capX_W$ is a weak equivalence.
\item $\capX$ is \emph{strongly $\infty$-cocartesian} if each face of dimension $\geq 2$ is $\infty$-cocartesian.
\item $\capX$ is a \emph{pushout cube} if the map
$\colim_{\powerset_1(V)}\capX\rarrow\colim_{\powerset(V)}\capX\Iso\capX_V$  is an isomorphism for each $V\subset W$ with $|V| \geq 2$; i.e., if it is built by colimits in the usual way out of the maps $\capX_\emptyset\rarrow\capX_V$, $V\subset W$, $|V|=1$.
\end{itemize}
\end{defn}

These definitions and constructions dualize as follows. Note that when looking for the appropriate dual construction, it is useful to observe that $\capX=\partial_\emptyset^V\capX$ when restricted to $\powerset(V)$; for instance, $\colim_{\powerset_1(V)}\capX=\colim_{\powerset_1(V)}\partial_\emptyset^V\capX$.

\begin{defn}
\label{defn:fibration_cubes_etc}
Let $\capO$ be an operad in $\capR$-modules and $W$ a finite set. Let $\capX$ be a $W$-cube in $\AlgO$ (resp. $\LtO$) or $\ModR$ (resp. $\SymSeq$) and $k\in\ZZ$.
\begin{itemize}
\item $\capX$ is a \emph{fibration cube} if the map $\capX_V\Iso\lim_{\powerset(W-V)}\partial_V^W\capX\rarrow\lim_{\powerset_0(W-V)}\partial_V^W\capX$ is a fibration for each $V\subset W$; in particular, each $\capX_V$ is fibrant.
\item $\capX$ is \emph{$k$-cartesian} if the map $\capX_\emptyset\wequiv\holim_{\powerset(W)}\capX\rarrow\holim_{\powerset_0(W)}\capX$ is $k$-connected.
\item $\capX$ is \emph{$\infty$-cartesian} if the map $\capX_\emptyset\wequiv\holim_{\powerset(W)}\capX\rarrow\holim_{\powerset_0(W)}\capX$ is a weak equivalence.
\item $\capX$ is \emph{strongly $\infty$-cartesian} if each face of dimension $\geq 2$ is $\infty$-cartesian.
\item $\capX$ is a \emph{pullback cube} if the map $\capX_V\Iso\lim_{\powerset(W-V)}\partial_V^W\capX\rarrow\lim_{\powerset_0(W-V)}\partial_V^W\capX$ is an isomorphism for each $V\subset W$ with $|W-V| \geq 2$; i.e., if it is built by limits in the usual way out of the maps $\capX_V\rarrow\capX_W$, $V\subset W$, $|W-V|=1$.
\end{itemize}
\end{defn}

\begin{rem}
It is important to note that every $1$-cube in $\AlgO$, $\LtO$, $\ModR$, or $\SymSeq$ is strongly $\infty$-cocartesian (resp. strongly $\infty$-cartesian), since there are no faces of dimension $\geq 2$, but only the $1$-cubes that are weak equivalences are $\infty$-cocartesian (resp. $\infty$-cartesian).
\end{rem}

\begin{prop}
\label{prop:connectivity_estimates_for_composition_of_maps}
Let $k\in\ZZ$. Consider any maps $X\rarrow Y\rarrow Z$ in $\AlgO$ (resp. $\LtO$) or $\ModR$ (resp. $\SymSeq$).
\begin{itemize}
\item[(a)] If $X\rarrow Y$ and $Y\rarrow Z$ are $k$-connected, then $X\rarrow Z$ is $k$-connected.
\item[(b)] If $X\rarrow Y$ is $(k-1)$-connected and $X\rarrow Z$ is $k$-connected, then $Y\rarrow Z$ is $k$-connected.
\item[(c)] If $X\rarrow Z$ is $k$-connected and $Y\rarrow Z$ is $(k+1)$-connected, then $X\rarrow Y$ is $k$-connected.

\end{itemize}
\end{prop}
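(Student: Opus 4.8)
The plan is to reduce (a)--(c) to a single statement about a homotopy fiber sequence of $\capR$-modules and then read off all three parts from the associated long exact sequence of homotopy groups. First I would reduce to the case of a composable pair in $\ModR$: by definition a map of $\capO$-algebras (resp.\ left $\capO$-modules) is $k$-connected precisely when its underlying map in $\ModR$ (resp.\ $\SymSeq$) is $k$-connected, and a map of symmetric sequences is $k$-connected precisely when $X[\mathbf{t}]\rarrow Y[\mathbf{t}]$ is $k$-connected for every $\mathbf{t}$; hence each of (a)--(c) in $\AlgO$, $\LtO$, or $\SymSeq$ is, termwise, an instance of the corresponding statement for a composable pair $X\rarrow Y\rarrow Z$ in $\ModR$, so it suffices to treat that case. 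I will use the standard dictionary (with homotopy groups taken in the stable model structure): a map $f$ of $\capR$-modules is $m$-connected if and only if $\hofib(f)$ is $(m-1)$-connected, equivalently $\pi_i f$ is an isomorphism for $i<m$ and an epimorphism for $i=m$, where an $\capR$-module $A$ is $n$-connected if and only if $\pi_i A=0$ for $i\leq n$.

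Next I would invoke the homotopy fiber sequence attached to the composable pair: a routine pasting of homotopy pullback squares identifies $\hofib(X\rarrow Y)$ with the homotopy fiber of the natural map $\hofib(X\rarrow Z)\rarrow\hofib(Y\rarrow Z)$ induced by $X\rarrow Y$ over $Z$ (with $X\rarrow Z$ the composite), giving a homotopy fiber sequence
\begin{align*}
  \hofib(X\rarrow Y)\ \rarrow\ \hofib(X\rarrow Z)\ \rarrow\ \hofib(Y\rarrow Z)
\end{align*}
in $\ModR$, and hence a long exact sequence of homotopy groups
\begin{align*}
  \dotsb\ \rarrow\ \pi_i\hofib(X\rarrow Y)\ \rarrow\ \pi_i\hofib(X\rarrow Z)\ \rarrow\ \pi_i\hofib(Y\rarrow Z)\ \rarrow\ \pi_{i-1}\hofib(X\rarrow Y)\ \rarrow\ \dotsb
\end{align*}

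Finally, each part is a one-line diagram chase in this long exact sequence. For (a): the hypotheses say $\pi_i\hofib(X\rarrow Y)$ and $\pi_i\hofib(Y\rarrow Z)$ vanish for $i\leq k-1$, so exactness forces $\pi_i\hofib(X\rarrow Z)=0$ for $i\leq k-1$, i.e.\ $X\rarrow Z$ is $k$-connected. For (b): the hypotheses give $\pi_i\hofib(X\rarrow Y)=0$ for $i\leq k-2$ and $\pi_i\hofib(X\rarrow Z)=0$ for $i\leq k-1$, so in the exact segment $\pi_i\hofib(X\rarrow Z)\rarrow\pi_i\hofib(Y\rarrow Z)\rarrow\pi_{i-1}\hofib(X\rarrow Y)$ the outer terms vanish for $i\leq k-1$, whence $\pi_i\hofib(Y\rarrow Z)=0$ there and $Y\rarrow Z$ is $k$-connected. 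For (c): the hypotheses give $\pi_i\hofib(X\rarrow Z)=0$ for $i\leq k-1$ and $\pi_i\hofib(Y\rarrow Z)=0$ for $i\leq k$, so in the exact segment $\pi_{i+1}\hofib(Y\rarrow Z)\rarrow\pi_i\hofib(X\rarrow Y)\rarrow\pi_i\hofib(X\rarrow Z)$ the outer terms vanish for $i\leq k-1$, whence $\pi_i\hofib(X\rarrow Y)=0$ there and $X\rarrow Y$ is $k$-connected.

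There is no genuine obstacle here: this is exactly the ``exercise left to the reader,'' and the only real content is keeping the connectivity conventions straight---in particular the shift between ``$f$ is $m$-connected'' and ``$\hofib(f)$ is $(m-1)$-connected,'' and the fact that $k$-connectedness in $\AlgO$, $\LtO$, and $\SymSeq$ is by definition inherited from $\ModR$, which is what makes the reduction legitimate. One could equally well bypass the fiber sequence and chase the maps $\pi_i X\rarrow\pi_i Y\rarrow\pi_i Z$ directly for each $i$, using only that composites of isomorphisms and of epimorphisms are again such; the fiber-sequence formulation merely packages all three parts uniformly.
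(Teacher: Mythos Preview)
Your proposal is correct. The paper does not give its own proof---it simply states that the proposition is ``an exercise left to the reader''---so there is nothing to compare against; your reduction to $\ModR$ followed by the long exact sequence on homotopy groups of the fiber sequence $\hofib(X\rarrow Y)\rarrow\hofib(X\rarrow Z)\rarrow\hofib(Y\rarrow Z)$ is the standard and expected way to carry out that exercise.
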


\begin{proof}
This is because the forgetful functor to symmetric spectra creates $k$-connected maps.
\end{proof}

Versions of the following connectivity estimates are proved in Goodwillie \cite[1.6--1.8]{Goodwillie_calc2} in the context of spaces, and exactly the same arguments give a proof of Propositions \ref{prop:map_of_cubical_diagrams} and \ref{prop:composed_map_of_cubical_diagrams} below in the context of structured ring spectra; this is an exercise left to the reader.

\begin{prop}
\label{prop:map_of_cubical_diagrams}
Let $W$ be a finite set and $k\in\ZZ$. Consider any map $\capX\rarrow\capY$ of $W$-cubes in $\AlgO$ (resp. $\LtO$) or $\ModR$ (resp. $\SymSeq$).
\begin{itemize}
\item[(a)] If $\capX\rarrow\capY$ and $\capX$ are $k$-cocartesian, then $\capY$ is $k$-cocartesian.
\item[(b)] If $\capX$ is $(k-1)$-cocartesian and $\capY$ is $k$-cocartesian, then $\capX\rarrow\capY$ is $k$-cocartesian.
\item[(c)] If $\capX\rarrow\capY$ and $\capY$ are $k$-cartesian, then $\capX$ is $k$-cartesian.
\item[(d)] If $\capX$ is $k$-cartesian and $\capY$ is $(k+1)$-cartesian, then $\capX\rarrow\capY$ is $k$-cartesian.

\end{itemize}
\end{prop}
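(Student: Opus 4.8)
The plan is to follow Goodwillie's proofs of \cite[1.6--1.8]{Goodwillie_calc2} essentially verbatim, using as the only substantive inputs the pointwise connectivity estimates of Proposition~\ref{prop:connectivity_estimates_for_composition_of_maps} together with the facts that homotopy base change preserves $k$-connected maps, and that homotopy cobase change preserves $k$-connected maps (the latter being immediate in $\ModR$ and $\SymSeq$, which are stable, and for $\AlgO$ and $\LtO$ a consequence of the pushout filtration analysis of \cite{Harper_Spectra, Harper_Modules}). It is convenient to first dispose of parts (c) and (d) for $\AlgO$ and $\LtO$: the forgetful functors $\AlgO\rarrow\ModR$ and $\LtO\rarrow\SymSeq$ preserve all small limits (Proposition~\ref{prop:basic_properties_LTO}) and all fibrations (Definition~\ref{defn:stable_flat_positive_model_structures}) and detect weak equivalences, hence preserve homotopy limits; so a $W$-cube of $\capO$-algebras (resp.\ left $\capO$-modules), or the $(|W|{+}1)$-cube associated to a map of such, is $k$-cartesian if and only if its underlying cube in $\ModR$ (resp.\ $\SymSeq$) is. This reduces (c) and (d) to the cases of $\ModR$ and $\SymSeq$; since these are stable, all four parts there are immediate by passing to total homotopy (co)fibers, using that the total (co)fiber of a cube iterates along any chosen direction and that in a (co)fiber sequence of spectra the connectivity of two of the three terms bounds that of the third.

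For the remaining cases --- (a) and (b) in $\AlgO$ and $\LtO$ --- recall the standard reduction: a map $\capX\rarrow\capY$ of $W$-cubes is the same datum as a single $W'$-cube $\capZ$, where $W':=W\sqcup\{w_0\}$ adjoins a formal index and $\capX=\partial_\emptyset^W\capZ$, $\capY=\partial_{\{w_0\}}^{W'}\capZ$ are the faces of $\capZ$ not containing, resp.\ containing, $w_0$, the natural transformation being recovered from the $w_0$-direction edges. A short analysis of the poset $\powerset_1(W')$ yields a natural weak equivalence
\begin{align*}
  \hocolim_{\powerset_1(W')}\capZ \wequiv
  \hocolim\Bigl(\capX_W \larrow \hocolim_{\powerset_1(W)}\capX \rarrow \hocolim_{\powerset_1(W)}\capY\Bigr);
\end{align*}
that is, writing $A:=\hocolim_{\powerset_1(W)}\capX$ and $B:=\hocolim_{\powerset_1(W)}\capY$, the object $\hocolim_{\powerset_1(W')}\capZ$ is the homotopy pushout $\capX_W\cup_A^h B$, with $\capX_W=\hocolim_{\powerset(W)}\capX$. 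Unwinding Definition~\ref{defn:cofibration_cubes_etc}, this translates into: $\capX$ is $k$-cocartesian iff $A\rarrow\capX_W$ is $k$-connected; $\capY$ is $k$-cocartesian iff $B\rarrow\capY_W$ is $k$-connected; and the cube $\capZ$ (i.e.\ the map $\capX\rarrow\capY$) is $k$-cocartesian iff $\capX_W\cup_A^h B\rarrow\capY_W$ is $k$-connected. (Read with $n$-cubes viewed as maps of $(n{-}1)$-cubes, the same formula would organize an induction on $|W|$; but in fact the translation just given reduces everything directly to the estimates below, with no induction needed.)

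With these translations, (a) and (b) are immediate. For (a): since $\capX$ is $k$-cocartesian, $A\rarrow\capX_W$ is $k$-connected, hence so is its homotopy cobase change $B\rarrow\capX_W\cup_A^h B$; composing with $\capX_W\cup_A^h B\rarrow\capY_W$, which is $k$-connected because $\capZ$ is $k$-cocartesian, Proposition~\ref{prop:connectivity_estimates_for_composition_of_maps}(a) gives that $B\rarrow\capY_W$ is $k$-connected, i.e.\ $\capY$ is $k$-cocartesian. For (b): $\capX$ being $(k{-}1)$-cocartesian makes $A\rarrow\capX_W$, and hence its cobase change $B\rarrow\capX_W\cup_A^h B$, $(k{-}1)$-connected; since the composite $B\rarrow\capY_W$ is $k$-connected ($\capY$ is $k$-cocartesian), Proposition~\ref{prop:connectivity_estimates_for_composition_of_maps}(b) shows $\capX_W\cup_A^h B\rarrow\capY_W$ is $k$-connected, i.e.\ $\capZ$ is $k$-cocartesian. (Parts (c), (d) in $\AlgO$, $\LtO$ were already reduced to the stable cases; alternatively they follow from the same bookkeeping applied to the dual description of cartesianness, using parts (c) and (d) of Proposition~\ref{prop:connectivity_estimates_for_composition_of_maps} and homotopy base change along fibrations in place of homotopy cobase change.)

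The one step that is not a formal manipulation of cubical homotopy (co)limits is the input, used above, that homotopy cobase change preserves $k$-connected maps of $\capO$-algebras and of left $\capO$-modules; this is where the monad $\capO\circ(-)$ genuinely enters, through the filtration of pushouts of algebras over operads established in \cite{Harper_Spectra, Harper_Modules}, and it is the counterpart of the elementary fact for spaces that Goodwillie uses without comment. I expect that to be the main obstacle; the rest --- the passage from a map of cubes to a cube of one higher dimension, the decomposition of $\powerset_1(W')$, the reduction of the cartesian statements via the forgetful functor, and the connectivity arithmetic --- is routine and identical to \cite[1.6--1.8]{Goodwillie_calc2}.
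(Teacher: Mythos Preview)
Your approach is exactly the paper's: its proof of this proposition is the single sentence ``exactly the same arguments [as Goodwillie, 1.6--1.8] give a proof \dots\ this is an exercise left to the reader,'' and you have carried out that exercise, with the standard pushout decomposition of $\hocolim_{\powerset_1(W')}\capZ$ for (a)--(b) and the reduction to the underlying stable category via the limit-preserving forgetful functor for (c)--(d). Your identification of the one non-formal input---that homotopy cobase change in $\AlgO$ and $\LtO$ preserves $k$-connected maps---is on the mark; note only that the filtration route you point to really runs through Proposition~\ref{prop:factorization_into_n_connected_cofibration} (replace a $k$-connected cofibration by one built from $k$-connected generating cells, then push out cell by cell), and that factorization carries the standing hypotheses that $\capR$ and $\capO_A$ are $(-1)$-connected. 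Those hypotheses are absent from the proposition as stated but are present in every place the paper actually invokes parts (a) and (b), so nothing downstream is affected; the paper is silent on this point as well.
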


\begin{prop}
\label{prop:composed_map_of_cubical_diagrams}
Let $W$ be a finite set and $k\in\ZZ$. Consider any map $\capX\rarrow\capY\rarrow\capZ$ of $W$-cubes in $\AlgO$ (resp. $\LtO$) or $\ModR$ (resp. $\SymSeq$).
\begin{itemize}
\item[(a)] If $\capX\rarrow\capY$ and $\capY\rarrow\capZ$ are $k$-cocartesian, then $\capX\rarrow\capZ$ is $k$-cocartesian.
\item[(b)] If $\capX\rarrow\capY$ is $(k-1)$-cocartesian and $\capX\rarrow\capZ$ is $k$-cocartesian, then $\capY\rarrow\capZ$ is $k$-cocartesian.
\item[(c)] If $\capX\rarrow\capY$ and $\capY\rarrow\capZ$ are $k$-cartesian, then $\capX\rarrow\capZ$ is $k$-cartesian.
\item[(d)] If $\capX\rarrow\capZ$ is $k$-cartesian and $\capY\rarrow\capZ$ is $(k+1)$-cartesian, then $\capX\rarrow\capY$ is $k$-cartesian.
\end{itemize}
\end{prop}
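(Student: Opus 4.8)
We only sketch the plan, which follows Goodwillie \cite[1.6--1.8]{Goodwillie_calc2} closely. The starting point is that a map $\function{f}{\capX}{\capY}$ of $W$-cubes is the same datum as a $(|W|{+}1)$-cube indexed on $\powerset(W\sqcup\{*\})$ (with $*\notin W$): restricting to the subsets not containing $*$ recovers $\capX$, restricting to those containing $*$ (reindexed by deleting $*$) recovers $\capY$, and the value at the terminal object $W\sqcup\{*\}$ is $\capY_W$. A composable pair $\capX\rarrow\capY\rarrow\capZ$ of $W$-cubes is then a diagram on $[2]\times\powerset(W)$, and the maps $\capX\rarrow\capY$, $\capY\rarrow\capZ$, $\capX\rarrow\capZ$ correspond to restriction to the three two-element full subposets of $[2]=\{0<1<2\}$ (each a copy of $\powerset(\{*\})$). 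The plan is to reduce each clause to the two-out-of-three estimates of Proposition~\ref{prop:connectivity_estimates_for_composition_of_maps} by means of a dimension-reduction statement for such a $(|W|{+}1)$-cube.

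For the cartesian clauses (c) and (d): since the forgetful functors $U$ are right Quillen and connectivity is measured on underlying objects, $k$-cartesianness in any of the four categories is detected after applying $U$, in $\ModR$ or $\SymSeq$; and there, in a stable setting, the total (iterated) homotopy fiber $\mathrm{tfib}$ of a $(|W|{+}1)$-cube regarded as a map $\function{f}{\capX}{\capY}$ of $W$-cubes is $\hofib\bigl(\mathrm{tfib}(\capX)\rarrow\mathrm{tfib}(\capY)\bigr)$ (a Fubini argument for homotopy limits). Unwinding Definition~\ref{defn:fibration_cubes_etc}, the $(|W|{+}1)$-cube is $k$-cartesian exactly when this is $(k{-}1)$-connected, i.e.\ when $\mathrm{tfib}(\capX)\rarrow\mathrm{tfib}(\capY)$ is $k$-connected. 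Clauses (c) and (d) then follow by applying Proposition~\ref{prop:connectivity_estimates_for_composition_of_maps}(a) and (c) to the composable pair $\mathrm{tfib}(\capX)\rarrow\mathrm{tfib}(\capY)\rarrow\mathrm{tfib}(\capZ)$; no connectivity hypothesis on $\capO,\capR$ enters here.

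For the cocartesian clauses (a) and (b) the reduction is dual but stays inside $\AlgO$ (resp.\ $\LtO$). First I would record that a $(|W|{+}1)$-cube $\function{f}{\capX}{\capY}$ is $k$-cocartesian if and only if the square $\mathcal{Q}_f$ with rows the structure maps $\hocolim_{\powerset_1(W)}\capX\rarrow\capX_W$ and $\hocolim_{\powerset_1(W)}\capY\rarrow\capY_W$ and vertical maps induced by $f$ is $k$-cocartesian: indeed $\powerset_1(W\sqcup\{*\})$ is the union of the sieve of subsets avoiding $*$ (a copy of $\powerset(W)$, with homotopy colimit $\capX_W$ since $W$ is terminal) and the cosieve of proper subsets containing $*$ (a copy of $\powerset_1(W)$ carrying $\capY$), glued along $\powerset_1(W)$ carrying $\capX$, so a standard homotopy-colimit computation identifies the structure map of the $(|W|{+}1)$-cube with that of $\mathcal{Q}_f$. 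Writing $P_X:=\hocolim_{\powerset_1(W)}\capX$ and similarly $P_Y,P_Z$, the squares $\mathcal{Q}_{\capX\rarrow\capY}$ and $\mathcal{Q}_{\capY\rarrow\capZ}$ share the middle row $P_Y\rarrow\capY_W$ and stack with outer rectangle $\mathcal{Q}_{\capX\rarrow\capZ}$; by iterated homotopy-pushout pasting, the source of the structure map of $\mathcal{Q}_{\capX\rarrow\capZ}$ is $\hocolim\bigl(Q\larrow P_Y\rarrow P_Z\bigr)$ where $Q:=\hocolim(\capX_W\larrow P_X\rarrow P_Y)$ is the source for $\mathcal{Q}_{\capX\rarrow\capY}$, and this structure map factors as a cobase change along $P_Y\rarrow P_Z$ of the structure map of $\mathcal{Q}_{\capX\rarrow\capY}$ followed by the structure map of $\mathcal{Q}_{\capY\rarrow\capZ}$. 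Using that a cobase change of a $j$-connected map is again $j$-connected --- immediate in $\ModR,\SymSeq$, and in $\AlgO,\LtO$ part of the homotopical analysis of pushouts of algebras and modules over operads (cf.\ \cite{Harper_Spectra, Harper_Hess}, using as elsewhere that $\capR,\capO$ are $(-1)$-connected) --- clauses (a) and (b) follow from Proposition~\ref{prop:connectivity_estimates_for_composition_of_maps}(a) and (b) respectively.

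I expect the main obstacle to be the cocartesian reduction: correctly identifying the structure map of the $(|W|{+}1)$-cube with that of the square $\mathcal{Q}_f$, with all homotopy colimits and limits understood as derived (formed on levelwise cofibrant, resp.\ fibrant, replacements), via the standard homotopy-pushout decomposition of a cube into its two opposite codimension-one faces. The only genuinely non-formal ingredient beyond Proposition~\ref{prop:connectivity_estimates_for_composition_of_maps} is that cobase change preserves connectivity of maps in $\AlgO$ and $\LtO$; in the classical setting of spaces this is elementary, whereas here it rests on the structure of pushouts of free algebras over operads. With these in hand, what remains is bookkeeping.
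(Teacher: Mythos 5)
Your cartesian halves (c),(d) are fine: cartesian-ness in $\AlgO$ and $\LtO$ is detected in $\ModR$ (resp.\ $\SymSeq$) because limits and fibrations are created by the forgetful functors, and there the identification of the total homotopy fiber of the $(|W|{+}1)$-cube with $\hofib\bigl(\mathrm{tfib}(\capX)\rarrow\mathrm{tfib}(\capY)\bigr)$ reduces (c),(d) to Proposition~\ref{prop:connectivity_estimates_for_composition_of_maps}(a),(c). Since the paper's own proof is only a pointer to Goodwillie's space-level arguments, this stable shortcut is a perfectly acceptable substitute. Your cocartesian reduction is also structurally correct: the punctured $(W\cup\{*\})$-colimit is $\hocolim\bigl(\capX_W\larrow\hocolim_{\powerset_1(W)}\capX\rarrow\hocolim_{\powerset_1(W)}\capY\bigr)$, and the structure map for $\capX\rarrow\capZ$ does factor as a cobase change of the structure map for $\capX\rarrow\capY$ followed by the one for $\capY\rarrow\capZ$; this mirrors the space-level proof.

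The gap is exactly the ingredient you wave at in one clause: that cobase change preserves connectivity of maps in $\AlgO$ and $\LtO$. This is not available in the generality in which the proposition is stated, and it cannot be had from \cite{Harper_Spectra,Harper_Hess} by assuming only that $\capR,\capO$ are $(-1)$-connected (hypotheses the proposition does not even contain). The filtration of a pushout has subquotients $\capO_C[\mathbf{t}]\wedge_{\Sigma_t}(Y/X)^{\wedge t}$, so one needs $\capO_C$ --- in effect the algebra $C$ being pushed into --- to be $(-1)$-connected as well. Without that the claim fails: over the operad for non-unital ring spectra with $\capR=S$, the $k$-connected map $\ast\rarrow B$ with $B$ free on $S^{k+1}$ has cobase change along $\ast\rarrow C$, $C$ free on $\Sigma^{-N}S$, equal to $C\rarrow C\amalg B$, whose cofiber contains wedge summands $S^{k+1}\wedge(\Sigma^{-N}S)^{\wedge m}$ for all $m$ and so is not $k$-connected; assembling this into squares ($\capX=(\ast\rarrow\ast)$, $\capY=(\ast\rarrow B)$, $\capZ=(C\rarrow C\amalg B)$) shows that parts (a),(b) themselves genuinely require connectivity hypotheses in $\AlgO$, $\LtO$ (in $\ModR$, $\SymSeq$ stability makes everything painless). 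So your cocartesian half cannot be completed as a proof of the statement as written; what it can prove, and what the paper actually uses, is the version with the standing assumptions in force ($\capR$, $\capO$ and the relevant algebras $(-1)$-connected, connectivities $\geq -1$). To patch it in that setting you must (i) check that the punctured-cube colimits $P_Y$, $P_Z$ and $Q$ are again $(-1)$-connected cofibrant objects, and (ii) extract the cobase-change lemma from Propositions~\ref{prop:factorization_into_n_connected_cofibration} and \ref{prop:pushout_of_n_connected_cofibration} (factor the $k$-connected map as a transfinite composition of pushouts of $k$-connected generating (acyclic) cofibrations and push out cell by cell), or deduce it from Theorem~\ref{thm:homotopy_excision}(a) together with a cofiber comparison in $\ModR$. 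These verifications are where the content lies --- they are precisely the point at which ``the same argument as for spaces,'' where cobase change preserving connectivity is elementary, stops being formal. (Alternatively, (a), (b), (d) follow formally from Proposition~\ref{prop:map_of_cubical_diagrams} by regarding $\capX\rarrow\capY\rarrow\capZ$ as a $(W\cup\{*,**\})$-cube and slicing it two ways, but that only relocates the same issue into the cocartesian parts of that proposition.)
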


The following results depend on the fact that the model structures on $\ModR$ and $\SymSeq$ are stable, so that fibration and cofibration sequences coincide. Note that these do not hold, in general, for $\AlgO$ and $\LtO$.

\begin{prop}
\label{prop:comparing_cocartesian_and_cartesian_estimates_in_ModR}
Let $W$ be a finite set and $k\in\ZZ$. Let $\capX$ be a $W$-cube in $\ModR$ (resp. $\SymSeq$).
\begin{itemize}
\item[(a)] $\capX$ is $k$-cocartesian if and only if $\capX$ is $(k-|W|+1)$-cartesian.
\item[(b)] $\capX$ is $k$-cartesian if and only if $\capX$ is $(k+|W|-1)$-cocartesian.
\end{itemize}
\end{prop}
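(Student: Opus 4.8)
\subsection*{Proof proposal}

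The plan is to compare the two conditions through the \emph{total homotopy cofiber} $\operatorname{tcofib}(\capX):=\operatorname{hocofib}\bigl(\hocolim_{\powerset_1(W)}\capX\rarrow\capX_W\bigr)$ and the \emph{total homotopy fiber} $\operatorname{tfib}(\capX):=\hofib\bigl(\capX_\emptyset\rarrow\holim_{\powerset_0(W)}\capX\bigr)$ of the $W$-cube $\capX$. Since the model structures on $\ModR$ and $\SymSeq$ are stable, fibration and cofibration sequences coincide, so a map $f$ is $k$-connected if and only if $\hofib(f)$ is $(k-1)$-connected, equivalently $\operatorname{hocofib}(f)$ is $k$-connected; applying this to the structure maps appearing in the definitions, $\capX$ is $k$-cocartesian if and only if $\operatorname{tcofib}(\capX)$ is $k$-connected, and $\capX$ is $k$-cartesian if and only if $\operatorname{tfib}(\capX)$ is $(k-1)$-connected. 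With this reformulation in hand, both (a) and (b) reduce to the single claim that there is a natural weak equivalence
\begin{align*}
  \operatorname{tcofib}(\capX)\wequiv\Sigma^{|W|}\operatorname{tfib}(\capX),
\end{align*}
where $\Sigma$ denotes the suspension equivalence of the stable homotopy category of $\capR$-modules: indeed $\Sigma^{|W|}\operatorname{tfib}(\capX)$ is $k$-connected exactly when $\operatorname{tfib}(\capX)$ is $(k-|W|)$-connected, which gives (a), and (b) is then (a) with $k$ replaced by $k+|W|-1$.

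To prove the displayed equivalence I would induct on $|W|$. When $|W|=1$ the cube $\capX$ is a single map $f$, with $\operatorname{tcofib}(\capX)=\operatorname{hocofib}(f)$ and $\operatorname{tfib}(\capX)=\hofib(f)$, so the claim is precisely the identification $\operatorname{hocofib}(f)\wequiv\Sigma\hofib(f)$ supplied by stability. For the inductive step, fix $w\in W$ and regard $\capX$ as the natural map of $(W\setminus\{w\})$-cubes $\partial_\emptyset^{W\setminus\{w\}}\capX\rarrow\partial_{\{w\}}^W\capX$. The total homotopy cofiber of a cube can be computed one coordinate at a time: $\operatorname{tcofib}(\capX)$ is the homotopy cofiber of the induced map $\operatorname{tcofib}(\partial_\emptyset^{W\setminus\{w\}}\capX)\rarrow\operatorname{tcofib}(\partial_{\{w\}}^W\capX)$ between the total homotopy cofibers of these two codimension-$1$ faces, and dually $\operatorname{tfib}(\capX)$ is the homotopy fiber of the corresponding map $g\colon\operatorname{tfib}(\partial_\emptyset^{W\setminus\{w\}}\capX)\rarrow\operatorname{tfib}(\partial_{\{w\}}^W\capX)$. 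Applying the inductive hypothesis naturally to both $(W\setminus\{w\})$-cubes identifies the first map with $\Sigma^{|W|-1}g$, and since $\Sigma$ is an equivalence commuting with homotopy cofibers while $\operatorname{hocofib}(g)\wequiv\Sigma\hofib(g)$ by stability, we obtain $\operatorname{tcofib}(\capX)\wequiv\operatorname{hocofib}(\Sigma^{|W|-1}g)\wequiv\Sigma^{|W|-1}\operatorname{hocofib}(g)\wequiv\Sigma^{|W|}\hofib(g)\wequiv\Sigma^{|W|}\operatorname{tfib}(\capX)$, completing the induction. The same argument works verbatim in $\SymSeq$, where homotopy (co)limits and connectivity are detected objectwise and the model structure is again stable.

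The one point requiring genuine care is the iterated description of the total homotopy (co)fiber used in the inductive step: decomposing $\powerset_1(W)$ along the sieve $\powerset(W\setminus\{w\})$ and its complementary cosieve so as to present $\hocolim_{\powerset_1(W)}\capX$ as a homotopy pushout, and thereby exhibit $\operatorname{tcofib}(\capX)$ as the homotopy cofiber of the map of total homotopy cofibers of opposite faces (and dually for $\operatorname{tfib}$). This is a standard fact about cubical diagrams --- the same manipulation of homotopy colimits over punctured cubes as in Goodwillie \cite{Goodwillie_calc2} --- and is where essentially all of the work sits; the rest of the argument is connectivity bookkeeping together with one application of stability per coordinate.
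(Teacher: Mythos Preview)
Your proposal is correct and takes essentially the same approach as the paper: both reduce the statement to the weak equivalence $\operatorname{tcofib}(\capX)\wequiv\Sigma^{|W|}\operatorname{tfib}(\capX)$ in the stable setting. The paper simply asserts this equivalence with a reference to Goodwillie \cite[1.1a, 1.4]{Goodwillie_calc2}, whereas you supply the standard inductive argument for it.
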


\begin{proof}
This is because the total homotopy cofiber of $\capX$ (see Goodwillie \cite[1.4]{Goodwillie_calc2}) is weakly equivalent to the $|W|$-th suspension, usually denoted $\Sigma^{|W|}$, of the total homotopy fiber of $\capX$ (see \cite[1.1a]{Goodwillie_calc2}).
\end{proof}

\subsection{Proof of higher homotopy excision for $\AlgO$ and $\LtO$}

The purpose of this section is to prove Theorem~\ref{thm:higher_homotopy_excision}. At the heart of our proof is a homotopical analysis of the construction $\capO_A$ described in Proposition~\ref{prop:coproduct_modules}. We deduce Theorem~\ref{thm:higher_homotopy_excision} from a more general result about the effect of the construction $A \mapsto \capO_A$ on strongly $\infty$-cocartesian cubes. 

\begin{defn}\label{def:symmetric_array}
Consider symmetric sequences in $\ModR$. A \emph{symmetric array} in $\ModR$ is a symmetric sequence in $\SymSeq$; i.e., a functor $\functor{A}{\Sigma^\op}{\SymSeq}$. Denote by $\SymArray:=\SymSeq^{\Sigma^\op}$ the category of symmetric arrays in $\ModR$ and their natural transformations.
\end{defn}

A first step in analyzing the pushouts in \eqref{eq:small_arg_pushout_modules} below is an analysis of certain coproducts. The following proposition is motivated by Goerss-Hopkins \cite[Section 2.3]{Goerss_Hopkins_moduli} and Mandell \cite[Section 13]{Mandell}; a proof is given in \cite[4.7]{Harper_Spectra}. The $\capO_A$ construction that arises here is crucial to our arguments.

\begin{prop}
\label{prop:coproduct_modules}
Let $\capO$ be an operad in $\ModR$, $A\in\AlgO$ (resp. $A\in\LtO$), and $Y\in\ModR$ (resp. $Y\in\SymSeq$). Consider any coproduct in $\AlgO$ (resp. $\LtO$) of the form $A\amalg\capO\circ(Y)$ (resp. $A\amalg(\capO\circ Y)$). There exists a symmetric sequence $\capO_A$ (resp. symmetric array $\capO_A$) and natural isomorphisms
\begin{align*}
  A\amalg\capO\circ(Y) \Iso
  \coprod\limits_{q\geq 0}\capO_A[\mathbf{q}]
  \Smash_{\Sigma_q}Y^{\wedge q}\quad
  \Bigl(\text{resp.}\quad
  A\amalg(\capO\circ Y) \Iso
  \coprod\limits_{q\geq 0}\capO_A[\mathbf{q}]
  \tensorcheck_{\Sigma_q}Y^{\tensorcheck q}
    \Bigr)
\end{align*}
in the underlying category $\ModR$ (resp. $\SymSeq$). For any $q\geq 0$, then $\capO_A[\mathbf{q}]$ is naturally isomorphic to a colimit of the form
\begin{align*}
  \capO_A[\mathbf{q}]&\Iso
  \colim\biggl(
  \xymatrix{
    \coprod\limits_{p\geq 0}\capO[\mathbf{p}\boldsymbol{+}\mathbf{q}]
    \Smash_{\Sigma_p}A^{\wedge p} &
    \coprod\limits_{p\geq 0}\capO[\mathbf{p}\boldsymbol{+}\mathbf{q}]
    \Smash_{\Sigma_p}(\capO\circ (A))^{\wedge p}\ar@<-0.5ex>[l]^-{d_1}
    \ar@<-1.5ex>[l]_-{d_0}
  }
  \biggl),\\
  \text{resp.}\quad
  \capO_A[\mathbf{q}]&\Iso
  \colim\biggl(
  \xymatrix{
    \coprod\limits_{p\geq 0}\capO[\mathbf{p}\boldsymbol{+}\mathbf{q}]
    \Smash_{\Sigma_p}A^{\tensorcheck p} &
    \coprod\limits_{p\geq 0}\capO[\mathbf{p}\boldsymbol{+}\mathbf{q}]
    \Smash_{\Sigma_p}(\capO\circ A)^{\tensorcheck p}\ar@<-0.5ex>[l]^-{d_1}
    \ar@<-1.5ex>[l]_-{d_0}
  }
  \biggl),
\end{align*}
in $\ModR^{\Sigma_q^\op}$ (resp. $\SymSeq^{\Sigma_q^\op}$), with $d_0$ induced by operad multiplication and $d_1$ induced by the left $\capO$-action map $\function{m}{\capO\circ (A)}{A}$ (resp. $\function{m}{\capO\circ A}{A}$).
\end{prop}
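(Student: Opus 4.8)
The plan is to realize $A\amalg\capO\circ(Y)$ as a reflexive coequalizer of free objects, compute it in the underlying category $\ModR$ (resp.\ $\SymSeq$) where reflexive coequalizers are created by the forgetful functor (Proposition~\ref{prop:basic_properties_LTO}), and then reorganize the answer by its ``weight'' in the variable $Y$. Since $\AlgO$ (resp.\ $\LtO$) is the category of algebras over the monad $\capO\circ(-)$ on $\ModR$ (resp.\ $\SymSeq$), the object $A$, with structure map $m\colon\capO\circ(A)\rarrow A$, sits in a reflexive coequalizer
\begin{align*}
  \capO\circ\bigl(\capO\circ(A)\bigr)\rightrightarrows\capO\circ(A)\rarrow A
\end{align*}
in $\AlgO$ (resp.\ $\LtO$), where the two maps are the monad multiplication and $\capO\circ(m)$, the last map is $m$, and the common section is the monad unit; reflexivity is just the monad unit axiom together with $m\circ\eta_A=\id$. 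The endofunctor $(-)\amalg\capO\circ(Y)$ preserves connected colimits, in particular coequalizers, and the free functor $\capO\circ(-)$ carries coproducts in $\ModR$ (resp.\ $\SymSeq$) to coproducts in $\AlgO$ (resp.\ $\LtO$); applying $(-)\amalg\capO\circ(Y)$ to the resolution above and using $\capO\circ(B)\amalg\capO\circ(Y)\iso\capO\circ(B\amalg Y)$ therefore presents $A\amalg\capO\circ(Y)$ as the reflexive coequalizer
\begin{align*}
  \capO\circ\bigl(\capO\circ(A)\amalg Y\bigr)\rightrightarrows\capO\circ(A\amalg Y)\rarrow A\amalg\capO\circ(Y),
\end{align*}
which, being reflexive, is computed in $\ModR$ (resp.\ $\SymSeq$).

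Next I would expand $\capO\circ(B\amalg Y)$ by the number of $Y$-inputs. Using distributivity of $\Smash$ over $\amalg$ in $\ModR$ together with the tensor-power calculations recalled above, one obtains a natural isomorphism
\begin{align*}
  \capO\circ(B\amalg Y)\Iso\coprod_{q\geq 0}\Bigl(\,\coprod_{p\geq 0}\capO[\mathbf{p}\boldsymbol{+}\mathbf{q}]\Smash_{\Sigma_p}B^{\wedge p}\Bigr)\Smash_{\Sigma_q}Y^{\wedge q},
\end{align*}
and likewise with $\Smash$ replaced by $\tensorcheck$ in the left $\capO$-module case. The decisive observation is that the two maps in the coequalizer above leave the $Y$-variables untouched: each is $\capO\circ(-)$ applied to a map $\capO\circ(A)\amalg Y\rarrow\capO\circ(A\amalg Y)$ in $\ModR$ that sends the summand $Y$ into the weight-one part, hence each preserves the weight-$q$ summands. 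Since $\coprod_q$, $(-)\Smash_{\Sigma_q}Y^{\wedge q}$, and orbits are all colimits, the coequalizer splits as a coproduct over $q$ of the coequalizers of the weight-$q$ pieces, each then smashed with $Y^{\wedge q}$. Taking $\capO_A[\mathbf{q}]$ to be the coequalizer of the weight-$q$ pieces produces exactly the stated colimit, with $d_0$ induced by operad multiplication and $d_1$ by $m$; naturality and the $\Sigma_q$-equivariance of everything in sight make $\capO_A$ a symmetric sequence (resp.\ a symmetric array in $\SymArray$).

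The hard part will be the last step: one must carry the $\Sigma_p\times\Sigma_q$-actions carefully through the distributivity isomorphisms and through the coend defining operad multiplication, both to verify that the weight grading is preserved by the two coequalizer maps and to identify those maps, on the weight-$q$ level, with operad multiplication and with $\capO\circ(m)$. Once this bookkeeping is in place the proposition follows formally, the remaining ingredients being soft facts about algebras over a monad and the preservation properties recorded in Proposition~\ref{prop:basic_properties_LTO}.
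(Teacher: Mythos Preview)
Your approach is correct and is the standard one. Note, however, that the paper itself does not prove this proposition: it simply records the statement and cites \cite[4.7]{Harper_Spectra} for the proof. The argument there is exactly the one you outline---resolve $A$ by the canonical reflexive coequalizer of free algebras, push forward along $(-)\amalg\capO\circ(Y)$, compute the resulting reflexive coequalizer in the underlying category via Proposition~\ref{prop:basic_properties_LTO}, expand by $Y$-weight, and read off $\capO_A[\mathbf{q}]$---so there is nothing to compare.

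One small imprecision worth flagging: you assert that each of $d_0,d_1$ is ``$\capO\circ(-)$ applied to a map $\capO\circ(A)\amalg Y\rarrow\capO\circ(A\amalg Y)$.'' Taken literally this is not right for either map: $d_1$ is $\capO\circ(m\amalg\id_Y)$, whose underlying map has target $A\amalg Y$; and $d_0$ is not of the form $\capO\circ(f)$ at all, but rather $\mu_{A\amalg Y}\circ\capO\circ(g)$ where $g$ is the canonical inclusion $\capO\circ(A)\amalg Y\hookrightarrow\capO\circ(A\amalg Y)$. What is true, and what you presumably intend, is that both $d_0$ and $d_1$ are $\capO$-algebra maps out of a free algebra and hence are adjoint to maps $\capO\circ(A)\amalg Y\rarrow\capO\circ(A\amalg Y)$ in $\ModR$, both of which send the summand $Y$ into weight one. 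The weight-preservation you need then follows once you check that extending such a map over the free algebra (i.e., applying $\mu\circ\capO\circ(-)$) again preserves $Y$-weight; this is precisely the $\Sigma_p\times\Sigma_q$ bookkeeping you correctly identify as the crux.
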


\begin{rem}
Other possible notations for $\capO_A$ include $\U_\capO(A)$ or $\U(A)$; these are closer to the notation used in \cite{Elmendorf_Mandell, Mandell} and are not to be confused with the forgetful functors. It is interesting to note---although we will not use it in this paper---that in the context of $\capO$-algebras the symmetric sequence $\capO_A$ has the structure of an operad; it parametrizes $\capO$-algebras under $A$ and is sometimes called the enveloping operad for $A$. It is for this purpose that the $\capO_A$ construction appears in \cite{Fresse_lie_theory}.
\end{rem}

Recall from \cite{Harper_Hess} the following proposition.

\begin{prop}
\label{prop:OA_commutes_with_certain_colimits}
Let $\capO$ be an operad in $\ModR$ and let $q\geq 0$. Then the functor
$
  \function{\capO_{(-)}[\mathbf{q}]}{\AlgO}{\ModR^{\Sigma_q^\op}}
$ (resp.
$
  \function{\capO_{(-)}[\mathbf{q}]}{\LtO}{\SymSeq^{\Sigma_q^\op}}
$) preserves reflexive coequalizers and filtered colimits.
\end{prop}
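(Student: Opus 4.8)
The plan is to reduce the statement to the fact that $\Sigma_t$-orbits (smash products, coproducts) commute with reflexive coequalizers and filtered colimits in $\ModR$ (resp. $\SymSeq$), together with the formula for $\capO_A[\mathbf{q}]$ from Proposition~\ref{prop:coproduct_modules}. First I would fix $q\geq 0$ and recall that $\capO_A[\mathbf{q}]$ is, by that proposition, a reflexive coequalizer (in $\ModR^{\Sigma_q^{\op}}$, resp.\ $\SymSeq^{\Sigma_q^{\op}}$) of a diagram whose two terms are
\begin{align*}
  \coprod_{p\geq 0}\capO[\mathbf{p}\boldsymbol{+}\mathbf{q}]\Smash_{\Sigma_p}A^{\wedge p}
  \quad\text{and}\quad
  \coprod_{p\geq 0}\capO[\mathbf{p}\boldsymbol{+}\mathbf{q}]\Smash_{\Sigma_p}(\capO\circ(A))^{\wedge p},
\end{align*}
with $d_0$ induced by operad multiplication and $d_1$ by the $\capO$-action on $A$ (and similarly in the left module case with $\tensorcheck$ in place of $\wedge$). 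Each of these terms is built from $A$ (resp.\ from $\capO\circ(A)$, which is again a colimit built from $A$) by applying finite smash powers, forming $\Sigma_p$-orbits, and taking a coproduct over $p$; all of these operations are left adjoints (or are computed from left adjoints) and hence preserve all colimits, in particular reflexive coequalizers and filtered colimits. The key point is then that a colimit over $\AlgO$ of such a reflexive coequalizer diagram can be commuted: one has a bisimplicial-type manipulation in which the ``outer'' colimit over the indexing category $\sC$ (reflexive or filtered) is exchanged with the ``inner'' reflexive coequalizer defining $\capO_A[\mathbf{q}]$. This is legitimate because colimits commute with colimits.

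More precisely, the key steps, in order, are: (1) Observe that for fixed $p$, the assignment $A\mapsto \capO[\mathbf{p}\boldsymbol{+}\mathbf{q}]\Smash_{\Sigma_p}A^{\wedge p}$ need \emph{not} preserve arbitrary colimits (smash powers do not), but the assignment $A\mapsto \capO[\mathbf{p}\boldsymbol{+}\mathbf{q}]\Smash_{\Sigma_p}(\capO\circ(A))^{\wedge p}$ together with the whole reflexive coequalizer does — this is the familiar phenomenon for monadic categories, where the free-algebra resolution converts smash powers into something that does preserve the relevant colimits. (2) Invoke Proposition~\ref{prop:basic_properties_LTO}: reflexive coequalizers and filtered colimits in $\AlgO$ (resp.\ $\LtO$) are created by the forgetful functor $U$ to $\ModR$ (resp.\ $\SymSeq$), so a reflexive coequalizer or filtered colimit $\colim_{\sC}A_c$ has underlying object $\colim_{\sC}U(A_c)$. (3) For such a colimit, use that in $\ModR$ (and $\SymSeq$) finite smash powers $(-)^{\wedge p}$ commute with reflexive coequalizers and filtered colimits — this is exactly the standard lemma underlying the construction of pushouts of operad algebras (see \cite{Harper_Spectra}, and the analogous statement in \cite{Harper_Hess}); similarly $\Sigma_p$-orbits and coproducts over $p$ commute with these colimits since they are themselves colimits. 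Hence each of the two terms in the coequalizer diagram, as a functor of $A$, commutes with reflexive coequalizers and filtered colimits. (4) Since $\capO\circ(-)$ also commutes with these colimits (the forgetful functor creates them and smash powers commute with them), the whole parallel-pair diagram commutes with the colimit over $\sC$. (5) Finally, colimits commute with colimits: the reflexive coequalizer $\capO_A[\mathbf{q}]$ of the parallel pair, formed levelwise in $\ModR^{\Sigma_q^{\op}}$ (resp.\ $\SymSeq^{\Sigma_q^{\op}}$), commutes with $\colim_{\sC}$, giving the natural isomorphism $\colim_{\sC}\capO_{A_c}[\mathbf{q}]\iso\capO_{\colim_{\sC}A_c}[\mathbf{q}]$.

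The main obstacle — and the only step requiring genuine care rather than formal nonsense — is step (3): verifying that finite smash powers $(-)^{\wedge t}$ in $\ModR$ (and the analogous $\tensorcheck$-powers in $\SymSeq$), together with the $\Sigma_t$-orbit construction, commute with reflexive coequalizers and with filtered colimits. Filtered colimits are straightforward since $\ModR$ and $\SymSeq$ are locally finitely presentable and $\wedge$ (resp.\ $\tensorcheck$) preserves filtered colimits in each variable, so a diagonal argument handles the $t$-fold power. The reflexive coequalizer case is the classical subtlety: one uses that a reflexive coequalizer, regarded via its common section, can be analyzed as an iterated pushout/filtered-type colimit, and that $\wedge$ preserves pushouts and filtered colimits in each variable; this is precisely the argument appearing in \cite[proof of 4.7 and surrounding lemmas]{Harper_Spectra} and \cite{Harper_Hess}, so I would cite it rather than reproduce it. Once step (3) is granted, steps (1), (2), (4), (5) are purely formal, and the proof is complete in both the $\AlgO$ and the $\LtO$ cases by the same argument with $\wedge$ replaced throughout by $\tensorcheck$.
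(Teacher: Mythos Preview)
The paper does not give a proof of this proposition: it is introduced with ``Recall from \cite{Harper_Hess} the following proposition'' and no argument is supplied. Your sketch is correct and is the standard argument one would expect to find in the cited reference: write $\capO_A[\mathbf{q}]$ as the reflexive coequalizer of Proposition~\ref{prop:coproduct_modules}, use Proposition~\ref{prop:basic_properties_LTO} to compute the relevant colimits in $\AlgO$ (resp.\ $\LtO$) on underlying objects, and then invoke the key lemma that finite smash powers (resp.\ $\tensorcheck$-powers) commute with reflexive coequalizers and filtered colimits, after which everything is formal commutation of colimits. One small comment: your step~(1) is phrased a bit misleadingly---it is not that ``the whole reflexive coequalizer'' repairs the failure of $A\mapsto A^{\wedge p}$ to preserve general colimits, but rather (as you correctly say in step~(3)) that $A\mapsto A^{\wedge p}$ already preserves the specific colimits in question, namely reflexive coequalizers and filtered ones. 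With that clarification the argument is complete.
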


\begin{rem}
Reflexive coequalizers and filtered colimits are the two main examples of sifted colimits. Although we will not need this generalization here, the functor $\capO_{(-)}[\mathbf{q}]$ commutes with all sifted colimits.
 \end{rem}

\begin{defn}\label{def:filtration_setup_modules}
Let $\function{i}{X}{Y}$ be a morphism in $\ModR$ (resp. $\SymSeq$) and $t\geq 1$. Define $Q_0^t:=X^{\wedge t}$ (resp. $Q_0^t:=X^{\tensorcheck t}$) and $Q_t^t:=Y^{\wedge t}$ (resp. $Q_t^t:=Y^{\tensorcheck t}$). For $0<q<t$ define $Q_q^t$ inductively by the left-hand (resp. right-hand) pushout diagrams
\begin{align*}
\xymatrix{
  \Sigma_t\cdot_{\Sigma_{t-q}\times\Sigma_{q}}X^{\wedge(t-q)}
  \Smash Q_{q-1}^q\ar[d]^{i_*}\ar[r]^-{\pr_*} & Q_{q-1}^t\ar[d]\\
  \Sigma_t\cdot_{\Sigma_{t-q}\times\Sigma_{q}}X^{\wedge(t-q)}
  \Smash Y^{\wedge q}\ar[r] & Q_q^t
}\quad
\xymatrix{
  \Sigma_t\cdot_{\Sigma_{t-q}\times\Sigma_{q}}X^{\tensorcheck(t-q)}
  \tensorcheck Q_{q-1}^q\ar[d]^{i_*}\ar[r]^-{\pr_*} & Q_{q-1}^t\ar[d]\\
  \Sigma_t\cdot_{\Sigma_{t-q}\times\Sigma_{q}}X^{\tensorcheck(t-q)}
  \tensorcheck Y^{\tensorcheck q}\ar[r] & Q_q^t
}
\end{align*}
in $\ModR^{\Sigma_t}$ (resp. $\SymSeq^{\Sigma_t}$). We sometimes denote $Q_q^t$ by $Q_q^t(i)$ to emphasize in the notation the map $\function{i}{X}{Y}$. The maps $\pr_*$ and $i_*$ are the obvious maps induced by $i$ and the appropriate projection maps; see, for instance, \cite[4.15]{Harper_Spectra} for a useful elaboration of this construction in low dimensions.
\end{defn}

The following filtrations of Elmendorf-Mandell \cite{Elmendorf_Mandell} provide one of the key technical tools needed for establishing the main results in this paper; their construction is motivated by \cite[Section 11]{Elmendorf_Mandell} where they appear for the case $r=0$ in the context of simplicial multifunctors of symmetric spectra. The refinement to $r\geq 0$ is motivated by comparing \cite[Section 11]{Elmendorf_Mandell} with Mandell \cite[13.7]{Mandell}. These filtrations have been exploited as a kind of secret weapon in \cite{Goerss_Hopkins_moduli, Harper_Spectra, Harper_Bar, Harper_Modules, Harper_Hess}; for other approaches to these types of filtrations compare \cite{Fresse_modules, Schwede_Shipley}. A proof of the Elmendorf-Mandell filtrations, in their refined form as described by the following proposition, is given in \cite{Harper_Hess}.

\begin{prop}
\label{prop:filtering_OA}
Let $\capO$ be an operad in $\ModR$, $A\in\AlgO$ (resp. $A\in\LtO$), and $\function{i}{X}{Y}$ in $\ModR$ (resp. $\SymSeq$). Consider any pushout diagram in $\AlgO$ (resp. $\LtO$) of the form
\begin{align}
\label{eq:small_arg_pushout_modules}
\xymatrix{
  \capO\circ (X)\ar[r]^-{f}\ar[d]^{\id\circ (i)} & A\ar[d]^{j}\\
  \capO\circ (Y)\ar[r] & B
}\quad\quad\text{resp.}\quad
\xymatrix{
  \capO\circ X\ar[r]^-{f}\ar[d]^{\id\circ i} & A\ar[d]^{j}\\
  \capO\circ Y\ar[r] & B
}
\end{align}
For each $r\geq 0$, $\capO_B[\mathbf{r}]$ is naturally isomorphic to a filtered colimit of the form
\begin{align}
\label{eq:filtered_colimit_modules_refined}
  \capO_B[\mathbf{r}]\Iso
  \colim\Bigl(
  \xymatrix{
    \capO_A^0[\mathbf{r}]\ar[r]^{j_1} &
    \capO_A^1[\mathbf{r}]\ar[r]^{j_2} &
    \capO_A^2[\mathbf{r}]\ar[r]^{j_3} & \dotsb
  }
  \Bigr)
\end{align}
in $\ModR^{\Sigma_r^\op}$ (resp. $\SymSeq^{\Sigma_r^\op}$), with $\capO_A^0[\mathbf{r}]:=\capO_A[\mathbf{r}]$ and $\capO_A^t[\mathbf{r}]$ defined inductively by pushout diagrams in $\ModR^{\Sigma_r^\op}$ (resp. $\SymSeq^{\Sigma_r^\op}$) of the form
\begin{align}
\label{eq:good_filtration_modules_refined}
\xymatrix{
  \capO_A[\mathbf{t}\boldsymbol{+}\mathbf{r}]\Smash_{\Sigma_t}Q_{t-1}^t\ar[d]^{\id\wedge_{\Sigma_t}i_*}
  \ar[r]^-{f_*} & \capO_A^{t-1}[\mathbf{r}]\ar[d]^{j_t}\\
  \capO_A[\mathbf{t}\boldsymbol{+}\mathbf{r}]\Smash_{\Sigma_t}Y^{\wedge t}\ar[r]^-{\xi_t} & \capO_A^t[\mathbf{r}]
}\quad\quad
\text{resp.}\quad
\xymatrix{
  \capO_A[\mathbf{t}\boldsymbol{+}\mathbf{r}]\tensorcheck_{\Sigma_t}Q_{t-1}^t\ar[d]^{\id\tensorcheck_{\Sigma_t}i_*}
  \ar[r]^-{f_*} & \capO_A^{t-1}[\mathbf{r}]\ar[d]^{j_t}\\
  \capO_A[\mathbf{t}\boldsymbol{+}\mathbf{r}]\tensorcheck_{\Sigma_t}Y^{\tensorcheck t}\ar[r]^-{\xi_t} & \capO_A^t[\mathbf{r}]
}
\end{align}
\end{prop}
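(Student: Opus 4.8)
The statement to prove is Proposition~\ref{prop:filtering_OA}, describing the filtration of $\capO_B[\mathbf{r}]$ associated to a pushout of the form \eqref{eq:small_arg_pushout_modules}.

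\medskip

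The plan is to reduce the pushout \eqref{eq:small_arg_pushout_modules} to the $\capO_A$-description of Proposition~\ref{prop:coproduct_modules}, and then to run the standard ``small object'' filtration of a pushout of a free map along the pushout-product filtration $Q_q^t$ of Definition~\ref{def:filtration_setup_modules}. First I would observe that the pushout $B$ in \eqref{eq:small_arg_pushout_modules} can be computed as a reflexive coequalizer: $B$ is the coequalizer of the two natural maps $\capO\circ(Y)\amalg \bigl(\capO\circ(X)\amalg A\bigr)\rightrightarrows \capO\circ(Y)\amalg A$ (resp. the $\LtO$-version), where one map uses $f$ and the other uses $\id\circ(i)$ together with the coproduct structure maps. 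Applying Proposition~\ref{prop:coproduct_modules} to rewrite $\capO\circ(Y)\amalg A$ as $\coprod_{q\geq 0}\capO_A[\mathbf q]\Smash_{\Sigma_q}Y^{\wedge q}$, and using Proposition~\ref{prop:OA_commutes_with_certain_colimits} (so that $\capO_{(-)}[\mathbf r]$ commutes with the reflexive coequalizer defining $B$) together with the analogous rewriting of $\capO_A[\mathbf p\boldsymbol{+}\mathbf q]$-type expressions, one gets a formula for $\capO_B[\mathbf r]$ as a reflexive coequalizer built from $\capO_A[\mathbf{t}\boldsymbol{+}\mathbf{r}]\Smash_{\Sigma_t}(\cdots)$ terms.

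\medskip

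The key step is then to identify this coequalizer with the colimit of the tower \eqref{eq:filtered_colimit_modules_refined}. Here I would proceed exactly as in the classical free-algebra filtration: the functor $Y\mapsto Y^{\wedge t}$ (resp. $Y^{\tensorcheck t}$) applied to the pushout $X\xrightarrow{i}Y$ has a natural filtration $Q_0^t\to Q_1^t\to\dotsb\to Q_t^t$ with layers $\Sigma_t\cdot_{\Sigma_{t-q}\times\Sigma_q}X^{\wedge(t-q)}\Smash (Y^{\wedge q}/Q_{q-1}^q)$, coming iteratively from the pushout squares of Definition~\ref{def:filtration_setup_modules}. Smashing with $\capO_A[\mathbf t\boldsymbol{+}\mathbf r]$ over $\Sigma_t$ and assembling over all $t$, one obtains a tower whose $t$-th stage is glued to the $(t-1)$-st stage along precisely the pushout square \eqref{eq:good_filtration_modules_refined}; the map $f_*$ records the ``collapsing'' of the $X$-variables into $A$ via $f$ and the operad/module structure, exactly the identification forced by the two face maps $d_0,d_1$ of Proposition~\ref{prop:coproduct_modules}. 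Checking that the colimit of this tower agrees with the coequalizer formula for $\capO_B[\mathbf r]$ is a matter of matching generators and relations degree by degree in the internal multiplicity $t$; this is routine but notationally heavy, and is essentially the content of \cite[Section 4]{Harper_Spectra} and \cite{Harper_Hess}, so I would cite those for the bookkeeping.

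\medskip

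The main obstacle is purely organizational rather than conceptual: one must carefully track the $\Sigma_r$-equivariance (so that every isomorphism and pushout square lives in $\ModR^{\Sigma_r^\op}$, resp. $\SymSeq^{\Sigma_r^\op}$, not just in $\ModR$), and one must verify that the many colimits involved — the reflexive coequalizer computing $B$, the filtered colimit \eqref{eq:filtered_colimit_modules_refined}, and the colimits internal to the $\capO_A$-construction — can all be interchanged; this is exactly what Proposition~\ref{prop:OA_commutes_with_certain_colimits} is for on the coequalizer side, and for the filtered colimit one uses that $\Smash_{\Sigma_t}$ and finite coproducts commute with filtered colimits. The $\LtO$/$\SymSeq$ case runs verbatim with $\Smash$ replaced by $\tensorcheck$. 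Since a full proof of an essentially identical statement appears in \cite{Harper_Hess}, I would present the argument as a recollection and refer there for the details.
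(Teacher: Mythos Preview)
Your proposal is correct and aligns with the paper's treatment: the paper does not give an independent proof of this proposition but simply recalls it from \cite{Harper_Hess}, which is exactly where you land after your sketch. Your outline of the reflexive-coequalizer rewriting via Proposition~\ref{prop:coproduct_modules} and the $Q_q^t$ pushout-product filtration is indeed the argument carried out in that reference, so nothing more is needed here.
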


\begin{rem}
It is important to note (see \cite{Harper_Hess}) that for $r=0$ the filtration \eqref{eq:filtered_colimit_modules_refined} specializes to a filtered colimit of the pushout in \eqref{eq:small_arg_pushout_modules} of the form
\begin{align}
\label{eq:good_filtration_modules}
  B\Iso
  \capO_B[\mathbf{0}]\Iso
  \colim\bigl(
  \xymatrix@1{
    A_0\ar[r]^{j_1} & A_1\ar[r]^{j_2} & A_2\ar[r]^{j_3} & \dotsb
  }
  \bigr)
\end{align}
in the underlying category $\ModR$ (resp. $\SymSeq$), with $A_0:=\capO_A[\mathbf{0}]\Iso A$ and $A_t:=\capO_A^t[\mathbf{0}]$.
\end{rem}

\begin{prop}\label{prop:pushout_of_n_connected_cofibration}
Let $n\geq -1$. If the map $\function{i}{X}{Y}$ in Proposition \ref{prop:filtering_OA} is an $n$-connected generating cofibration or generating acyclic cofibration in $\ModR$ (resp. $\SymSeq$) with the positive flat stable model structure, and $\capR,\capO_A$ are $(-1)$-connected, then each map $j_t$ in \eqref{eq:good_filtration_modules} and \eqref{eq:good_filtration_modules_refined} is an $n$-connected monomorphism. In particular, the map $j$ in \eqref{eq:small_arg_pushout_modules} is an $n$-connected monomorphism in the underlying category $\ModR$ (resp. $\SymSeq$).
\end{prop}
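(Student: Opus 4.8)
The plan is to analyze the filtration \eqref{eq:good_filtration_modules} and \eqref{eq:good_filtration_modules_refined} step by step, showing inductively that each $j_t$ is an $n$-connected monomorphism, which will then imply (via Proposition \ref{prop:connectivity_estimates_for_composition_of_maps}(a) together with the fact that connectivity is preserved under sequential colimits of monomorphisms) that the composite map $j$ is an $n$-connected monomorphism. The key observation is that $j_t$ is built by cobase change along the map
\[
  \id\wedge_{\Sigma_t} i_*\colon \capO_A[\mathbf{t}\boldsymbol{+}\mathbf{r}]\Smash_{\Sigma_t}Q_{t-1}^t \rarrow \capO_A[\mathbf{t}\boldsymbol{+}\mathbf{r}]\Smash_{\Sigma_t}Y^{\wedge t},
\]
so it suffices to prove that this map is itself an $n$-connected monomorphism (both properties are preserved under cobase change in the stable model structures, since monomorphisms of $\capR$-modules and of symmetric sequences are closed under pushout, and $n$-connected maps are stable under cobase change because the positive flat stable model structure is left proper and the cofiber only goes up in connectivity).

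So the main work is a connectivity estimate for the map $i_*\colon Q_{t-1}^t \rarrow Y^{\wedge t}$ (resp. $Q_{t-1}^t \rarrow Y^{\tensorcheck t}$), and then an analysis of what happens after smashing with $\capO_A[\mathbf{t}\boldsymbol{+}\mathbf{r}]$ over $\Sigma_t$. First I would recall (or reprove) that since $i\colon X\rarrow Y$ is an $n$-connected cofibration, the map $i_*\colon Q_{t-1}^t\rarrow Q_t^t=Y^{\wedge t}$ is built out of the cubical cross-effect of the $t$-fold smash of the cofibration $i$; in the stable setting, the relevant statement is that $Q_{t-1}^t\rarrow Y^{\wedge t}$ is an $n$-connected cofibration — in fact its cofiber is the $\Sigma_t$-orbits of $(Y/X)^{\wedge t}$, which is $n$-connected once $Y/X$ is $n$-connected and the other smash factors are $(-1)$-connected (this uses that $\capR$, hence everything in sight, is $(-1)$-connected). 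The pushout-product/filtration structure of Definition \ref{def:filtration_setup_modules} is exactly the iterated-pushout-product decomposition of $i^{\Smash t}$, so each stage is a cobase change of $\Sigma_t\cdot_{\Sigma_{t-q}\times\Sigma_q} X^{\Smash(t-q)}\Smash (\text{cofiber-type term})$, and $n$-connectivity propagates.

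Next I would address the effect of $\capO_A[\mathbf{t}\boldsymbol{+}\mathbf{r}]\Smash_{\Sigma_t}(-)$. Since $\capO_A$ is $(-1)$-connected by hypothesis and $\capR$ is $(-1)$-connected, smashing an $n$-connected map of $(-1)$-connected objects with a $(-1)$-connected object keeps it $n$-connected — this is the standard connectivity estimate for the smash product of symmetric spectra (using the positive flat/$S$ model structure so that the smash product has the correct homotopy type without cofibrancy hypotheses on all factors), and the relevant point for the $\Sigma_t$-quotient is that in the positive flat stable model structure $(-)\Smash_{\Sigma_t}(-)$ applied to suitably free/flat $\Sigma_t$-objects computes the homotopy orbits, and homotopy orbits of an $n$-connected object are $n$-connected. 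Monomorphy is easier: smashing with a fixed object and taking $\Sigma_t$-orbits (a colimit) preserves monomorphisms of $\capR$-modules and of symmetric sequences, and pushouts preserve monomorphisms, so each $j_t$ is a monomorphism and hence so is the colimit $j$.

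The main obstacle I anticipate is the bookkeeping around cofibrancy and flatness: to run the connectivity estimates for the smash product and for $\Sigma_t$-orbits without extra hypotheses, one must know that the objects $X^{\Smash(t-q)}$, $Q_{q-1}^q$, and $\capO_A[\mathbf{t}\boldsymbol{+}\mathbf{r}]$ are flat enough (i.e. cofibrant or at least flat in the $S$-model-structure sense) that derived and underived smash products and orbits agree; this is precisely why the statement is phrased for $i$ a \emph{generating (acyclic) cofibration} of the positive flat stable model structure, where $X$ and $Y$ are built from shifted free/flat cells, and where the filtration terms $Q_q^t$ are correspondingly flat $\Sigma_t$-objects. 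I would therefore be careful to invoke the relevant flatness/cofibrancy properties of $\ModR$ and $\SymSeq$ with the positive flat stable model structure — citing \cite{Harper_Spectra, Harper_Spectra_Correction, Harper_Hess} and the good properties of the flat stable model structure \cite[5.3.7, 5.3.10]{Hovey_Shipley_Smith} — at each point where I pass between the pointset-level constructions in the filtration and their homotopical meaning. Once that is in place, the connectivity and monomorphy claims follow by induction on $t$ as sketched, and the ``in particular'' for $j$ follows since a sequential colimit along $n$-connected monomorphisms is again an $n$-connected monomorphism.
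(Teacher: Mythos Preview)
Your proposal is correct and follows essentially the same approach as the paper: identify the cofiber of $j_t$ as $\capO_A[\mathbf{t}\boldsymbol{+}\mathbf{r}]\tensorcheck_{\Sigma_t}(Y/X)^{\tensorcheck t}$, use that $Y/X$ is $n$-connected and $\capO_A$ is $(-1)$-connected to get the connectivity estimate, and use cofibrancy of the domains of generating (acyclic) cofibrations for the monomorphism claim. The paper's proof is simply terser, delegating your detailed flatness/connectivity bookkeeping to the citations \cite[4.28*]{Harper_Spectra_Correction} and \cite[4.40]{Harper_Hess}.
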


\begin{proof}
It suffices to consider the case of left $\capO$-modules. The generating cofibrations (see \cite[4.2(b)]{Harper_Spectra} or \cite[7.10(b)]{Harper_Hess}) and acyclic cofibrations in $\SymSeq$ have cofibrant domains. Hence by \cite[4.28*]{Harper_Spectra_Corrigendum}, each $j_t$ in \eqref{eq:good_filtration_modules} is a monomorphism. We know that $A_t/A_{t-1}\Iso \capO_A[\mathbf{t}]\tensorcheck_{\Sigma_t}(Y/X)^{\tensorcheck t}$ and $*\rarrow Y/X$ is an $n$-connected cofibration in $\SymSeq$. It follows from \cite[4.40]{Harper_Hess} that each $j_t$ in \eqref{eq:good_filtration_modules} is $n$-connected. The case for each map $j_t$ in \eqref{eq:good_filtration_modules_refined} is similar. Here, it is important to note that since $\Smash$ denotes the smash product of $\capR$-modules (see \cite[7.4]{Harper_Hess}), it is given by $\Smash=(\tensor_S)_\capR$ first tensoring over the sphere spectrum $S$ and then further dividing out by the $\capR$-action; in particular, we have used the assumption that $\capR$ is $(-1)$-connected to ensure that the indicated smash powers behave as desired with respect to connectivity.
\end{proof}

The following proposition is closely related to Dugger-Shipley \cite[A.3, A.4]{Dugger_Shipley}.

\begin{prop}
\label{prop:factorization_into_n_connected_cofibration}
Let $\capO$ be an operad in $\capR$-modules and $n\geq -1$. If $\function{f}{A}{C}$ is an $n$-connected map in $\AlgO$ (resp. $\LtO$) and $\capR,\capO_A$ are $(-1)$-connected, then $f$ factors in $\AlgO$ (resp. $\LtO$) as
\begin{align}
\label{eq:factoring_n_connected_maps}
\xymatrix{
  A\ar[r]^-{j} & B\ar[r]^-{p} & C
}
\end{align}
a nice $n$-connected cofibration followed by an acyclic fibration. Here, ``nice'' means that $j$ is a (possibly transfinite) composition of pushouts of $n$-connected generating cofibrations and generating acyclic cofibrations in $\AlgO$ (resp. $\LtO$). This factorization is functorial in all such $f$.
\end{prop}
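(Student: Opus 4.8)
The plan is to build the factorization \eqref{eq:factoring_n_connected_maps} by a variant of Quillen's small object argument, but using only \emph{$n$-connected} generating (acyclic) cofibrations as the cells we attach, and then checking afterwards that the resulting map $j$ is $n$-connected while $p$ is an acyclic fibration. Concretely, I would first recall from \cite{Harper_Spectra, Harper_Hess} that the free-forgetful adjunction $\ModR \rightleftarrows \AlgO$ (resp. $\SymSeq \rightleftarrows \LtO$) transports generating cofibrations $I$ and generating acyclic cofibrations $J$ to sets $\capO\circ I$ and $\capO\circ J$ that detect, respectively, acyclic fibrations and fibrations between fibrant objects. Let $I_n\subset I$ and $J_n\subset J$ be the subsets consisting of $n$-connected maps (recall that for $n\geq -1$ the cells $S^m\wedge(\ModR\text{-generators})$ with $m\geq n+1$ are $n$-connected, so $I_n, J_n$ are cofinal enough to still detect what we need among sufficiently connected maps). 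Then factor $f$ functorially as $A\xrightarrow{j} B\xrightarrow{p} C$ by running the small object argument relative to the set $\capO\circ(I_n)\cup\capO\circ(J_n)$: $j$ is, by construction, a transfinite composite of pushouts of maps in $\capO\circ(I_n)\cup\capO\circ(J_n)$, i.e.\ ``nice'' in the stated sense.

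The two things left to verify are that $p$ is an acyclic fibration and that $j$ is $n$-connected. For $p$: by construction $p$ has the right lifting property against $\capO\circ(I_n)\cup\capO\circ(J_n)$. I would argue that $j$ is in particular a cofibration and a weak equivalence. It is a cofibration because each attached cell is a cofibration in $\AlgO$ (resp.\ $\LtO$) — here one uses that $n$-connected generating (acyclic) cofibrations in $\ModR$ (resp.\ $\SymSeq$) have cofibrant domains and Proposition~\ref{prop:pushout_of_n_connected_cofibration} shows their pushouts along the $\capO_A$-filtration are monomorphisms, hence cofibrations in the underlying category — and cofibrations are closed under pushout and transfinite composition. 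For the weak equivalence claim: since $f$ is $n$-connected with $n\geq -1$, the obstruction to lifting against $f$ (equivalently, the relative homotopy/homology in degrees $\leq n$) can be killed using only cells of connectivity $\geq n$, so the resulting $j$ is $n$-connected; combined with $2$-out-of-$3$ and the fact (Proposition~\ref{prop:map_of_cubical_diagrams}-style, or just direct connectivity estimates, Proposition~\ref{prop:connectivity_estimates_for_composition_of_maps}) that $p$ is then at least $n$-connected, an inspection of the small object argument (which also attaches acyclic cells via $\capO\circ(J_n)$ to fix higher homotopy) shows $p$ is a weak equivalence, hence an acyclic fibration between fibrant objects. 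The $n$-connectivity of $j$ itself follows from Proposition~\ref{prop:pushout_of_n_connected_cofibration}: each stage $A_s\rarrow A_{s+1}$ is an $n$-connected monomorphism (applying that proposition with $\capO_{A_s}$ in place of $\capO_A$, which is $(-1)$-connected by induction since $\capO_A$ is and each cell is $(-1)$-connected), and $n$-connected maps are closed under the relevant transfinite composites and pushouts by the connectivity estimates.

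The main obstacle I anticipate is the bookkeeping that makes ``using only $n$-connected cells suffices to build a weak equivalence'' precise: one must show that the small object argument, restricted to $\capO\circ(I_n)\cup\capO\circ(J_n)$, still produces a $p$ with the \emph{full} right lifting property needed to conclude it is an acyclic fibration — a priori restricting the cells could leave $p$ only ``$n$-co-detecting.'' The resolution is that $f$ being $n$-connected means all the lifting problems against $I$ that $f$ fails already have connectivity in the range where $I_n$-cells see them; so attaching $I_n$-cells is enough to make the map an acyclic fibration. I would package this either by citing the analogous argument of Dugger--Shipley \cite[A.3, A.4]{Dugger_Shipley} (adapted from $\ModR$-algebras to $\capO$-algebras and left $\capO$-modules via Propositions~\ref{prop:filtering_OA} and \ref{prop:pushout_of_n_connected_cofibration}), or by a direct induction: given $n$-connectivity of $f$, the cofiber $C/A$ is $n$-connected, so one can choose generators of its homotopy (homology) in degrees $\geq n+1$ as attaching maps, building $B$ so that $B/A$ still has this connectivity and $C/B\simeq *$. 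Functoriality is automatic from the functorial form of the small object argument. Everything else is routine: cofibrancy of cells, closure of monomorphisms/cofibrations/$n$-connected maps under the colimits involved, and the translation between $\AlgO$ and $\LtO$ (it suffices to treat $\LtO$, since $\AlgO$ embeds as the left $\capO$-modules concentrated at $0$).
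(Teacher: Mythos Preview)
Your overall strategy is the same as the paper's: run the small object argument with the set of $n$-connected generating cofibrations together with the generating acyclic cofibrations, then verify separately that $j$ is $n$-connected (via Proposition~\ref{prop:pushout_of_n_connected_cofibration}, iterated along the transfinite composite) and that $p$ is an acyclic fibration. The citation of Dugger--Shipley is apt; the paper invokes it too.

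There are two issues to clean up. First, a slip: you write ``$j$ is in particular a cofibration and a weak equivalence,'' but $j$ is \emph{not} a weak equivalence---it is merely $n$-connected. It is $p$ that must be the weak equivalence, and your subsequent argument in fact aims at that; the sentence as written is wrong.

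Second, and more substantively, your argument that $p$ is an acyclic fibration is too vague at exactly the point you flag as the ``main obstacle.'' The paper's resolution is crisp and you are missing its two key observations. (a)~Every generating acyclic cofibration is a weak equivalence, hence $\infty$-connected, so your $J_n$ is all of $J$; thus $p$ automatically has the RLP against \emph{all} generating acyclic cofibrations and is therefore a fibration. Combined with the hypothesis that $C$ is fibrant (which you never actually use), $p$ is a fibration between fibrant objects. (b)~Since $j$ and $f$ are $n$-connected, $p$ is $n$-connected; the paper then invokes the fact that an $n$-connected fibration between fibrant objects in the positive flat stable model structure is a \emph{positive levelwise} $n$-connected fibration. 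This is what allows lifts against the low-dimensional generating cofibrations in $I\setminus I_n$, and together with the RLP against $I_n$ (by construction) yields RLP against all of $I$, i.e.\ $p$ is an acyclic fibration. Your phrase ``an inspection of the small object argument (which also attaches acyclic cells via $\capO\circ(J_n)$ to fix higher homotopy) shows $p$ is a weak equivalence'' misidentifies the role of the $J$-cells (they make $p$ a fibration, not an equivalence) and does not supply the levelwise-connectivity step that actually closes the argument.
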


\begin{proof}[Proof of Proposition \ref{prop:factorization_into_n_connected_cofibration}]
It suffices to consider the case of left $\capO$-modules. Let $\function{i}{X}{Y}$ be an $n$-connected generating cofibration or generating acyclic cofibration in $\SymSeq$ with the positive flat stable model structure, and consider the pushout diagram
\begin{align}
\label{eq:gluing_on_cells}
\xymatrix{
  \capO\circ X\ar[r]\ar[d] & Z_0\ar[d]^{i_0}\\
  \capO\circ Y\ar[r] & Z_1
}
\end{align}
in $\LtO$. Assume $\capO_{Z_0}$ is $(-1)$-connected; let's verify that $\capO_{Z_1}$ is $(-1)$-connected and $i_0$ is an $n$-connected monomorphism in $\SymSeq$. Let $A:=Z_0$. By Proposition \ref{prop:filtering_OA}, we know $\capO_{Z_1}[\mathbf{r}]$ is naturally isomorphic to a filtered colimit of the form
\begin{align*}
  \capO_{Z_1}[\mathbf{r}]&\Iso
  \colim\bigl(
  \xymatrix{
    \capO_A^0[\mathbf{r}]\ar[r]^{j_1} &
    \capO_A^1[\mathbf{r}]\ar[r]^{j_2} &
    \capO_A^2[\mathbf{r}]\ar[r]^{j_3} & \dotsb
  }
  \bigr)
\end{align*}
in $\SymSeq^{\Sigma_r^\op}$, and Proposition \ref{prop:pushout_of_n_connected_cofibration} verifies that each $j_t$ is an $n$-connected monomorphism. Since $\capO_A^0=\capO_{Z_0}$ is $(-1)$-connected by assumption, it follows that $\capO_{Z_1}$ is $(-1)$-connected, and taking $r=0$ (or using Proposition \ref{prop:pushout_of_n_connected_cofibration} again) finishes the argument that $i_0$ is an $n$-connected monomorphism in $\SymSeq$.

Consider a sequence
$
\xymatrix@1{
  Z_0\ar[r] & Z_1\ar[r] & Z_2\ar[r] & \dotsb
}
$
of pushouts of maps as in \eqref{eq:gluing_on_cells}, and let $Z_\infty:=\colim_k Z_k$. Consider the naturally occurring map $Z_0\rarrow Z_\infty$, and assume $\capO_{Z_0}$ is $(-1)$-connected. By the argument above we know that
\begin{align*}
\xymatrix{
  \capO_{Z_0}[\mathbf{r}]\ar[r] &
  \capO_{Z_1}[\mathbf{r}]\ar[r] &
  \capO_{Z_2}[\mathbf{r}]\ar[r] & \dotsb
}
\end{align*}
is a sequence of $n$-connected monomorphisms, hence $\capO_{Z_\infty}$ is $(-1)$-connected, and taking $r=0$ verifies that $Z_0\rarrow Z_\infty$ is an $n$-connected monomorphism in $\SymSeq$.

The small object argument (see \cite[7.12]{Dwyer_Spalinski} for a useful introduction) produces a factorization \eqref{eq:factoring_n_connected_maps} of $f$ such that $p$, and hence its fiber $F\rightarrow *$, has the right lifting property with respect to the $n$-connected generating cofibrations and generating acyclic cofibrations in $\LtO$, and $j$ is a (possibly transfinite) composition of pushouts of maps as in \eqref{eq:gluing_on_cells}, starting with $Z_0=A$. It follows from the latter lifting property that both $p$ and its pullback $F\rightarrow *$ are fibrations in $\LtO$; in particular, $F[\mathbf{u}]$ is stably fibrant and hence is an $\Omega$-spectrum \cite[1.4]{Hovey_Shipley_Smith} for each $u\geq 0$. By the argument above, it follows that $j$ is $n$-connected. Since $f$ is $n$-connected by assumption, it follows that $p$ is $n$-connected and therefore $F\rightarrow *$ is $n$-connected.  Since $F\rightarrow *$, furthermore, has the right lifting property with respect to the $n$-connected generating cofibrations, it follows that the homotopy groups $\pi_kF[\mathbf{u}]=0$ are trivial for each $k\geq n$, $u\geq 0$, and hence $F\rightarrow *$ is a weak equivalence; therefore $p$ is a weak equivalence which completes the proof.
\end{proof}

\begin{rem}
To keep the statement of Proposition \ref{prop:factorization_into_n_connected_cofibration} as simple and non-technical as possible, we have been conservative in our choice for the set of maps used in the small object argument. In other words, running the small object argument with the set of $n$-connected generating cofibrations and generating acyclic cofibrations in $\AlgO$ (resp. $\LtO$) is sufficient for our purposes and makes for an attractive and simple statement, but one can obtain the desired factorizations using a smaller set of maps; this is an exercise left to the reader.
\end{rem}

\begin{prop}
\label{prop:OA_construction_on_n_connected_cofibrations}
Let $\capO$ be an operad in $\capR$-modules and $n\geq -1$.  Let $\function{j}{A}{B}$ be a cofibration in $\AlgO$ (resp. $\LtO$). Assume that $\capR,\capO_A$ are $(-1)$-connected. If $j$ is $n$-connected, then $\capO_A[\mathbf{r}]\rarrow\capO_B[\mathbf{r}]$ is an $n$-connected monomorphism for each $r\geq 0$.
\end{prop}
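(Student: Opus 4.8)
\medskip

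The plan is to reduce the whole statement to a single factorization of $j$ coming from Proposition~\ref{prop:factorization_into_n_connected_cofibration}, together with one homotopy-invariance input for the construction $A\mapsto\capO_A$.

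First I would reduce to the case of left $\capO$-modules, exactly as in the proofs of Propositions~\ref{prop:pushout_of_n_connected_cofibration} and \ref{prop:factorization_into_n_connected_cofibration}, and fix $r\geq 0$. Choose an acyclic cofibration $\function{\iota}{B}{\ol{B}}$ with $\ol{B}$ fibrant. Since $j$ is $n$-connected and $\iota$ is a weak equivalence, the composite $\function{\iota j}{A}{\ol{B}}$ is $n$-connected by Proposition~\ref{prop:connectivity_estimates_for_composition_of_maps}(a); as $\ol{B}$ is fibrant, Proposition~\ref{prop:factorization_into_n_connected_cofibration} applies and factors $\iota j$ as
\begin{align*}
\xymatrix{A\ar[r]^-{j'} & B'\ar[r]^-{p'} & \ol{B}}
\end{align*}
with $j'$ a nice $n$-connected cofibration and $p'$ an acyclic fibration. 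Since $j$ is a cofibration and $p'$ an acyclic fibration, the commutative square with top $j'$, left $j$, bottom $\iota$, and right $p'$ admits a lift $\function{h}{B}{B'}$ with $hj=j'$ and $p'h=\iota$; by the two-out-of-three property, $h$ is a weak equivalence.

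Now I would apply the functor $\function{\capO_{(-)}[\mathbf{r}]}{\LtO}{\SymSeq^{\Sigma_r^\op}}$. Functoriality and $hj=j'$ give $\capO_h[\mathbf{r}]\circ\capO_j[\mathbf{r}]=\capO_{j'}[\mathbf{r}]$, and the argument in the proof of Proposition~\ref{prop:factorization_into_n_connected_cofibration} (this is where the hypotheses that $\capR$ and $\capO_A$ are $(-1)$-connected enter) shows that $\capO_{j'}[\mathbf{r}]\colon\capO_A[\mathbf{r}]\rarrow\capO_{B'}[\mathbf{r}]$ is an $n$-connected monomorphism. Being a left factor of a monomorphism, $\capO_j[\mathbf{r}]$ is a monomorphism. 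It remains to see that $\capO_j[\mathbf{r}]$ is $n$-connected; granting that $\capO_h[\mathbf{r}]$ is a weak equivalence, hence $(n+1)$-connected, this follows from Proposition~\ref{prop:connectivity_estimates_for_composition_of_maps}(c) applied to $\capO_A[\mathbf{r}]\rarrow\capO_B[\mathbf{r}]\rarrow\capO_{B'}[\mathbf{r}]$ (whose composite is $\capO_{j'}[\mathbf{r}]$), connectivity of maps in $\SymSeq^{\Sigma_r^\op}$ being that of the underlying maps of symmetric sequences.

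The one step still to be justified, and the main obstacle, is that $\capO_{(-)}[\mathbf{r}]$ sends the weak equivalence $h$ to a weak equivalence $\capO_h[\mathbf{r}]$ --- that is, the homotopy invariance of $A\mapsto\capO_A$. For weak equivalences between cofibrant left $\capO$-modules this is part of the homotopical analysis of $\capO_{(-)}$ in \cite{Harper_Hess}, and since $B$ and $B'$ receive cofibrations from $A$ they are cofibrant as soon as $A$ is, which settles that case directly; the general case rests on the corresponding homotopy invariance of $\capO_{(-)}[\mathbf{r}]$ without a cofibrancy hypothesis, and it is here that the assumptions that $\capR$ and $\capO$ (and not merely $\capO_A$) are $(-1)$-connected are genuinely used. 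Everything else in the argument is routine bookkeeping with the connectivity estimates of Proposition~\ref{prop:connectivity_estimates_for_composition_of_maps} and with the filtration of Proposition~\ref{prop:filtering_OA} that underlies Proposition~\ref{prop:factorization_into_n_connected_cofibration}.
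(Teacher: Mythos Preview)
Your reduction via Proposition~\ref{prop:factorization_into_n_connected_cofibration} and the lifting argument producing $h$ are fine, and you correctly identify the remaining obstacle: that $\capO_{(-)}[\mathbf{r}]$ sends the weak equivalence $h$ to a weak equivalence. But this is a genuine gap, not a routine citation. The homotopy invariance you need is not proved anywhere in the paper, and in the generality required here (no cofibrancy hypothesis on $A$, and the statement only assumes $\capR,\capO_A$ are $(-1)$-connected, not $\capO$) it is essentially equivalent to the special case of the very proposition you are proving for acyclic cofibrations, combined with a Ken Brown argument---so invoking it outright is close to circular. Your fallback that it holds when $A$ is cofibrant does not rescue the argument, since the proposition makes no such assumption.

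The paper's proof avoids homotopy invariance altogether by a retract trick. In part~(a) it assumes $B$ is fibrant: then the factorization of Proposition~\ref{prop:factorization_into_n_connected_cofibration} applied directly to $j$ has an acyclic fibration as its second factor, and lifting $j$ against that acyclic fibration exhibits $j$ as a retract of the nice cofibration $A\to Z_\infty$. Since $\capO_{(-)}[\mathbf{r}]$ is a functor, it preserves retracts, and a retract of an $n$-connected monomorphism is again one. For general $B$ (part~(b)), one takes an acyclic cofibration $B\to C$ with $C$ fibrant and applies part~(a) to both $A\to C$ and $B\to C$; the composite $\capO_A[\mathbf{r}]\to\capO_B[\mathbf{r}]\to\capO_C[\mathbf{r}]$ is then an $n$-connected monomorphism whose second map is a monomorphism and a weak equivalence, and the conclusion for the first map follows. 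The retract step is exactly what lets the paper stay entirely within the filtration analysis of Propositions~\ref{prop:pushout_of_n_connected_cofibration} and \ref{prop:factorization_into_n_connected_cofibration} without ever needing $\capO_{(-)}$ to be homotopy invariant in advance.
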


\begin{proof}
It suffices to consider the case of left $\capO$-modules. Proceed exactly as in the proof of Proposition \ref{prop:factorization_into_n_connected_cofibration}, and consider the pushout diagram \eqref{eq:gluing_on_cells}. Assume $\capO_{Z_0}$ is $(-1)$-connected; let's verify that $\capO_{Z_0}[\mathbf{r}]\rarrow\capO_{Z_1}[\mathbf{r}]$ is an $n$-connected monomorphism for each $r\geq 0$. This follows by arguing exactly as in the proof of Proposition \ref{prop:factorization_into_n_connected_cofibration}. Consider a sequence
\begin{align*}
\xymatrix@1{
  Z_0\ar[r] & Z_1\ar[r] & Z_2\ar[r] & \dotsb
}
\end{align*}
of pushouts of maps as in \eqref{eq:gluing_on_cells}, and let $Z_\infty:=\colim_k Z_k$. Consider the naturally occurring map $Z_0\rarrow Z_\infty$, and assume $\capO_{Z_0}$ is $(-1)$-connected. By the argument above we know that
\begin{align*}
\xymatrix{
  \capO_{Z_0}[\mathbf{r}]\ar[r] &
  \capO_{Z_1}[\mathbf{r}]\ar[r] &
  \capO_{Z_2}[\mathbf{r}]\ar[r] & \dotsb
}
\end{align*}
is a sequence of $n$-connected monomorphisms, hence $\capO_{Z_0}[\mathbf{r}]\rarrow \capO_{Z_\infty}[\mathbf{r}]$ is an $n$-connected monomorphism. Noting that every $n$-connected cofibration of the form $A\rarrow B$ in $\LtO$ is a retract of a (possibly transfinite) composition of pushouts of maps as in \eqref{eq:gluing_on_cells}, starting with $Z_0=A$, finishes the proof.
\end{proof}

\begin{prop}
Let $\capO$ be an operad in $\capR$-modules. If $A$ is a cofibrant $\capO$-algebra (resp. left $\capO$-module) and $\capR,\capO,A$ are $(-1)$-connected, then $\capO_A$ is $(-1)$-connected.
\end{prop}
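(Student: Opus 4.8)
The plan is to reduce to the case of left $\capO$-modules and then deduce the statement from Proposition~\ref{prop:OA_construction_on_n_connected_cofibrations}, applied not to a cell presentation of $A$ but to the single map out of the initial object. First I would note that $\function{\hat{-}}{\AlgO}{\LtO}$ is a left Quillen functor for the positive flat stable model structures (its right adjoint $\Ev_0$ preserves fibrations and acyclic fibrations essentially by definition, since fibrations in $\LtO$ are objectwise and $\Ev_0$ just reads off the $\mathbf{0}$-component), so $\hat A$ is a cofibrant left $\capO$-module whenever $A$ is a cofibrant $\capO$-algebra, and $\hat A$ is $(-1)$-connected whenever $A$ is. Comparing the defining natural isomorphisms of Proposition~\ref{prop:coproduct_modules} in the two cases -- using $\widehat{\capO(Y)}\Iso\capO\circ\hat Y$ together with the fact that $\hat{-}$ preserves coproducts -- yields $\capO_A[\mathbf{q}]\Iso(\capO_{\hat A}[\mathbf{q}])[\mathbf{0}]$ for each $q\geq 0$, so the connectivity of $\capO_A$ follows from that of the symmetric array $\capO_{\hat A}$. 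Hence it suffices to treat the left $\capO$-module case.

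So let $A_0$ denote the initial left $\capO$-module, for which $A_0\amalg(\capO\circ Y)\Iso\capO\circ Y$ naturally in $Y$; comparing this with Proposition~\ref{prop:coproduct_modules} identifies $\capO_{A_0}[\mathbf{q}]$ with the symmetric sequence concentrated at $0$ with value $\capO[\mathbf{q}]$, so that $\capO_{A_0}$ is $(-1)$-connected (using that $\capO$ and the zero object are $(-1)$-connected). Now suppose $A$ is a cofibrant left $\capO$-module with $\capR,\capO,A$ all $(-1)$-connected. Cofibrancy of $A$ means the canonical map $\function{j}{A_0}{A}$ is a cofibration in $\LtO$, and $j$ is a $(-1)$-connected map because both its source and target are $(-1)$-connected. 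I would then apply Proposition~\ref{prop:OA_construction_on_n_connected_cofibrations} with $n=-1$ -- its hypotheses hold since $\capR$ is $(-1)$-connected and $\capO_{A_0}$ is $(-1)$-connected by the previous sentence -- to conclude that $\capO_{A_0}[\mathbf{r}]\rarrow\capO_A[\mathbf{r}]$ is a $(-1)$-connected monomorphism for every $r\geq 0$. Since $\capO_{A_0}[\mathbf{r}]$ is $(-1)$-connected, so is $\capO_A[\mathbf{r}]$, and therefore $\capO_A$ is $(-1)$-connected.

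The point requiring care is that one cannot simply induct over a cell presentation of $A$: the cells $\capO\circ(Y/X)$ used to build a cofibrant object are in general only $(-m)$-connected, since for a generating cofibration $\function{i}{X}{Y}$ the quotient $Y/X$ is only as highly connected as the sphere from which it is built, so the intermediate stages of such a presentation need not be $(-1)$-connected. The device that circumvents this is to exploit the hypothesis that $A$ itself is $(-1)$-connected, which is exactly what makes $A_0\rarrow A$ a single $(-1)$-connected cofibration suitable for feeding into Proposition~\ref{prop:OA_construction_on_n_connected_cofibrations}; everything else is routine bookkeeping with the $\capO_{(-)}$ construction and elementary connectivity estimates.
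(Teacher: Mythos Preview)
Your proposal is correct and follows essentially the same approach as the paper: reduce to left $\capO$-modules, identify $\capO_{A_0}[\mathbf{r}]$ for the initial object $A_0=\capO\circ\emptyset$ as $\widehat{\capO[\mathbf{r}]}$, and apply Proposition~\ref{prop:OA_construction_on_n_connected_cofibrations} with $n=-1$ to the cofibration $A_0\rarrow A$. Your treatment is more detailed than the paper's (which dispatches the reduction in a single clause and cites \cite[5.31]{Harper_Hess} for the identification of $\capO_{\capO\circ\emptyset}[\mathbf{r}]$), and your closing remark about why a naive cell-by-cell induction fails is a helpful gloss but not part of the argument proper.
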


\begin{proof}
It suffices to consider the case of left $\capO$-modules. This follows from Proposition \ref{prop:OA_construction_on_n_connected_cofibrations} by considering the map $\capO\circ\emptyset\rarrow A$ in $\LtO$, together with the natural isomorphisms $\capO\circ\emptyset\Iso\widehat{\capO[\mathbf{0}]}$ and $\capO_{\capO\circ\emptyset}[\mathbf{r}]\Iso\widehat{\capO[\mathbf{r}]}$ for each $r\geq 0$ (see \cite[5.31]{Harper_Hess}).
\end{proof}

\begin{rem}
\label{rem:map_of_n_cubes}
Any $3$-cube $\mathcal{X}$ of $\capO$-algebras (resp. left $\capO$-modules) may be regarded as a map of $2$-cubes $A\rarrow B$ with $A=\partial_\emptyset^{\{1,2\}}\capX$ (the top face of $\capX$) and $B=\partial_{\{3\}}^{\{1,2,3\}}\capX$ (the bottom face of $\capX$) as follows:
\begin{align*}
\xymatrix@!0{
\capX_\emptyset\ar[rr]\ar[dd]\ar[dr] &&
\capX_{\{1\}}\ar[dr]\ar'[d][dd]\\
&\capX_{\{2\}}\ar[rr]\ar[dd] &&
\capX_{\{1,2\}}\ar[dd]\\
\capX_{\{3\}}\ar[dr]\ar'[r][rr] &&
\capX_{\{1,3\}}\ar[dr]\\
&\capX_{\{2,3\}}\ar[rr] &&
\capX_{\{1,2,3\}}
}
\quad\quad
\xymatrix@!0{
A_\emptyset\ar[rr]\ar[dd]\ar[dr] &&
A_{\{1\}}\ar[dr]\ar'[d][dd]\\
&A_{\{2\}}\ar[rr]\ar[dd] &&
A_{\{1,2\}}\ar[dd]\\
B_\emptyset\ar[dr]\ar'[r][rr] &&
B_{\{1\}}\ar[dr]\\
&B_{\{2\}}\ar[rr] &&
B_{\{1,2\}}
}
\end{align*}
More generally, we may regard an $(n+1)$-cube of $\capO$-algebras (resp. left $\capO$-modules) as a map of $n$-cubes $A\rarrow B$ with $A=\partial_\emptyset^{\{1,\dots,n\}}\capX$ and $B=\partial_{\{n+1\}}^{\{1,\dots,n+1\}}\capX$, for each $n\geq 0$. In particular, the map $\capX_\emptyset\rarrow\capX_{\{n+1\}}$ in $\capX$ is the map $A_\emptyset\rarrow B_\emptyset$ in $A\rarrow B$.
\end{rem}

We now prove the following result of which Theorem~\ref{thm:higher_homotopy_excision} is the special case $r = 0$. 

\begin{thm}[Homotopical analysis of $\capO_\capX$ for a pushout cofibration cube $\capX$]
\label{thm:pushout_cofibration_cube_homotopical_analysis}
Let $\capO$ be an operad in $\capR$-modules and $n\geq 1$. Let $\capX$ be a pushout $(\mathbf{n}\boldsymbol{+}\mathbf{1})$-cube of $\capO$-algebras (resp. left $\capO$-modules) regarded as a map of pushout $\mathbf{n}$-cubes $A\rarrow B$ as in Remark \ref{rem:map_of_n_cubes}. Assume that $\capR,\capO_{A_\emptyset}$ are $(-1)$-connected. Let $k_1,\dots,k_{n+1}\geq -1$. Assume that each $A_\emptyset\rarrow A_{\{i\}}$ and $A_\emptyset\rarrow B_\emptyset$ are cofibrations between cofibrant objects in $\AlgO$ (resp. $\LtO$) $(1\leq i\leq n)$. Consider the associated left-hand diagram of the form
\begin{align}
\label{eq:colim_punctured_cube_associated_diagrams}
\xymatrix{
  \colim\limits_{\ \ \powerset_1(\mathbf{n})}A\ar[d]\ar[r] & \tilde{A}\ar[d]\\
  \colim\limits_{\ \ \powerset_1(\mathbf{n})}B\ar[r] & \tilde{B}
}\quad\quad
\xymatrix{
  \colim\limits_{\ \ \powerset_1(\mathbf{n})}\capO_A[\mathbf{r}]\ar[d]\ar[r] &
  \capO_{\tilde{A}}[\mathbf{r}]\ar[d]\\
  \colim\limits_{\ \ \powerset_1(\mathbf{n})}\capO_B[\mathbf{r}]\ar[r] &
  \capO_{\tilde{B}}[\mathbf{r}]
}
\end{align}
in the underlying category $\ModR$ (resp. $\SymSeq$), and more generally, the associated right-hand diagrams $(r\geq 0)$ in $\ModR^{\Sigma_r^\op}$ (resp. $\SymSeq^{\Sigma_r^\op}$). If each $A_\emptyset\rarrow A_{\{i\}}$ is $k_i$-connected $(1\leq i\leq n)$ and $A_\emptyset\rarrow B_\emptyset$ is $k_{n+1}$-connected, then the diagrams \eqref{eq:colim_punctured_cube_associated_diagrams} are $(k_1+\dots + k_{n+1}+n)$-cocartesian; here, $\tilde{A}:=A_\mathbf{n}$ and $\tilde{B}:=B_{\mathbf{n}}$.
\end{thm}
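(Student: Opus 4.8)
The plan is to argue by induction on $n$, deducing the case $n$ from the case $n-1$ (for $n\geq 2$) and from Proposition~\ref{prop:OA_construction_on_n_connected_cofibrations} (for $n=1$). It suffices to treat left $\capO$-modules. A commutative square is $l$-cocartesian exactly when its total homotopy cofiber is $l$-connected, and the total homotopy cofiber of \eqref{eq:colim_punctured_cube_associated_diagrams} is the cofiber of the induced map $\mathrm{cofiber}(\phi_A)\rarrow\mathrm{cofiber}(\phi_B)$, where $\phi_A\colon\colim_{\powerset_1(\mathbf{n})}\capO_A[\mathbf{r}]\rarrow\capO_{\tilde A}[\mathbf{r}]$ is the comparison map in the underlying category (so that $\mathrm{cofiber}(\phi_A)$ is the total homotopy cofiber of the $\mathbf{n}$-cube $\capO_A[\mathbf{r}]$) and $\phi_B$ is its analog for $B$. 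So the task is to estimate the connectivity of this iterated cofiber.

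\emph{Reductions.} As $\capX$ is a pushout cube it is determined by its spider of legs $\capX_\emptyset\rarrow\capX_{\{w\}}$. Since $l$-cocartesianness is preserved under retracts and under weak equivalences of cubes (using that $\capO_{(-)}[\mathbf{q}]$ preserves weak equivalences between cofibrant objects), a fibrant-replacement-then-retract argument based on Proposition~\ref{prop:factorization_into_n_connected_cofibration} lets us assume that every leg is a transfinite composite of pushouts of $k_i$- (resp.\ $k_{n+1}$-) connected generating cofibrations and generating acyclic cofibrations. Writing $A_\emptyset\rarrow B_\emptyset$ as such a composite $A_\emptyset=B_\emptyset^{(0)}\rarrow B_\emptyset^{(1)}\rarrow\dotsb$ produces a filtration $\capX=\colim_\alpha\capX^{(\alpha)}$ by pushout cofibration cubes, with $\capX^{(0)}$ the degenerate cube $A\rarrow A$ (whose square \eqref{eq:colim_punctured_cube_associated_diagrams} has zero total cofiber). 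As total homotopy cofiber commutes with filtered homotopy colimits, the total homotopy cofiber of \eqref{eq:colim_punctured_cube_associated_diagrams} is $\colim_\alpha C^{(\alpha)}$, where $C^{(\alpha)}$ denotes the total homotopy cofiber of \eqref{eq:colim_punctured_cube_associated_diagrams} for $\capX^{(\alpha)}$; since $C^{(0)}=0$ it suffices to bound each $C^{(\alpha+1)}/C^{(\alpha)}$. Proposition~\ref{prop:pushout_of_n_connected_cofibration} guarantees that throughout the filtration $\capR,\capO_{B_\emptyset^{(\alpha)}}$ stay $(-1)$-connected and each leg $B_\emptyset^{(\alpha)}\rarrow B_{\{i\}}^{(\alpha)}$ stays $k_i$-connected (it attaches to $B_\emptyset^{(\alpha)}$ the same $k_i$-connected cells as $A_\emptyset\rarrow A_{\{i\}}$ attaches to $A_\emptyset$), so $B^{(\alpha)}$ is a pushout cofibration $\mathbf{n}$-cube meeting the hypotheses of the theorem in dimension $n-1$.

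\emph{A single cell attachment.} Fix $\alpha$ and let $\capO\circ X\rarrow\capO\circ Y$ be the $k_{n+1}$-connected cell with $B_\emptyset^{(\alpha+1)}=B_\emptyset^{(\alpha)}\amalg_{\capO\circ X}\capO\circ Y$; because $\capX$ is a pushout cube this cell is attached at every vertex. Proposition~\ref{prop:filtering_OA}, read naturally in $V$, presents the $\mathbf{n}$-cube $\capO_{B^{(\alpha+1)}}[\mathbf{r}]$ as a filtered colimit of $\mathbf{n}$-cubes starting from $\capO_{B^{(\alpha)}}[\mathbf{r}]$ whose successive cofibers are the $\mathbf{n}$-cubes $\capO_{B^{(\alpha)}}[\mathbf{t}\boldsymbol{+}\mathbf{r}]\tensorcheck_{\Sigma_t}(Y/X)^{\tensorcheck t}$, $t\geq 1$ (using $Y^{\tensorcheck t}/Q_{t-1}^t(i)\iso(Y/X)^{\tensorcheck t}$). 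Applying $\colim_{\powerset_1(\mathbf{n})}$ — which commutes with filtered colimits, with $-\tensorcheck_{\Sigma_t}(Y/X)^{\tensorcheck t}$, and, these being cofibration cubes, with the relevant homotopy colimits — and then passing to total homotopy cofibers, $C^{(\alpha+1)}/C^{(\alpha)}$ acquires a filtration whose $t$-th associated graded ($t\geq 1$) is
\[
  \big(\,\text{total homotopy cofiber of the }\mathbf{n}\text{-cube }\capO_{B^{(\alpha)}}[\mathbf{t}\boldsymbol{+}\mathbf{r}]\,\big)\ \tensorcheck_{\Sigma_t}\ (Y/X)^{\tensorcheck t}.
\]
This is where the induction is used: regarding the pushout $\mathbf{n}$-cube $B^{(\alpha)}$ as a map of pushout $(n-1)$-cubes by splitting off the direction $n$, the case $n-1$ of the theorem — or Proposition~\ref{prop:OA_construction_on_n_connected_cofibrations} when $n=1$ — shows that the total homotopy cofiber of the $\mathbf{n}$-cube $\capO_{B^{(\alpha)}}[\mathbf{q}]$ is $(k_1+\dotsb+k_n+n-1)$-connected for every $q\geq 0$. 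Since $(Y/X)^{\tensorcheck t}$ is $(t(k_{n+1}+1)-1)$-connected and $\Sigma_t$-homotopy orbits do not decrease connectivity, the displayed term is $\big(t(k_{n+1}+1)+k_1+\dotsb+k_n+n-1\big)$-connected, whose minimum over $t\geq 1$ is attained at $t=1$ and equals $k_1+\dotsb+k_{n+1}+n$. Hence each $C^{(\alpha+1)}/C^{(\alpha)}$, and therefore $\colim_\alpha C^{(\alpha)}$ — the total homotopy cofiber of \eqref{eq:colim_punctured_cube_associated_diagrams} — is $(k_1+\dotsb+k_{n+1}+n)$-connected, proving the theorem.

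I expect the main obstacle to be the cofibrancy bookkeeping that all of the above takes for granted: that $\capO_A[\mathbf{q}]$ is a cofibration cube whenever $A$ is a pushout cofibration cube (so that $\colim_{\powerset_1(\mathbf{n})}$ models the homotopy colimit and total homotopy cofibers are computed by iterated mapping cones), that the filtration maps of Proposition~\ref{prop:filtering_OA} are homotopically harmless, and that $-\tensorcheck_{\Sigma_t}(Y/X)^{\tensorcheck t}$ preserves total homotopy cofibers; these rest on the cofibrancy analysis of the positive flat stable model structure in \cite{Harper_Spectra, Harper_Spectra_Correction, Harper_Hess}. The reduction steps also need a little care since $B_\emptyset$ need not be fibrant, but fibrant replacement circumvents this.
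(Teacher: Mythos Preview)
Your proposal is correct and follows essentially the same strategy as the paper: induction on $n$ starting from Proposition~\ref{prop:OA_construction_on_n_connected_cofibrations}, reduction via Proposition~\ref{prop:factorization_into_n_connected_cofibration} to the case where $A_\emptyset\rarrow B_\emptyset$ is a transfinite composite of cell attachments, and then for a single cell the filtration of Proposition~\ref{prop:filtering_OA}, whose associated graded is analyzed using the induction hypothesis applied to the $\mathbf{n}$-cube $\capO_{B^{(\alpha)}}[\mathbf{t}\boldsymbol{+}\mathbf{r}]$ together with the connectivity of $(Y/X)^{\tensorcheck t}$. The only real difference is packaging: you phrase everything in terms of total homotopy cofibers and filtered colimits thereof, whereas the paper works directly with the comparison maps $\colim_{\powerset_1(\mathbf{n})}\capO_A[\mathbf{r}]\rarrow\capO_{\tilde{A}}[\mathbf{r}]$ and tracks connectivity of the induced maps $(*)_t$ through an explicit ladder of cofiber sequences; the paper also separates out the non-fibrant $B_\emptyset$ case as a short part~(b) rather than folding it into the initial reduction.
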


\begin{rem}
In other words, this theorem shows that the $(\mathbf{n}\boldsymbol{+}\mathbf{1})$-cube $\capO_\capX[\mathbf{r}]$ is $(k_1+\dots + k_{n+1}+n)$-cocartesian for each $r\geq 0$, or equivalently, it shows that the $(\mathbf{n}\boldsymbol{+}\mathbf{1})$-cube $\capO_\capX$ is $(k_1+\dots + k_{n+1}+n)$-cocartesian. The left-hand diagram in (\ref{eq:colim_punctured_cube_associated_diagrams}) is the case $r = 0$ and the result here is precisely that needed for Theorem~\ref{thm:higher_homotopy_excision}.
\end{rem}

\begin{proof}
It suffices to consider the case of left $\capO$-modules. The argument is by induction on $n$. It is convenient to start the induction at $n=0$ in which case the diagrams in \eqref{eq:colim_punctured_cube_associated_diagrams} are maps (i.e., $1$-cubes) of the form $\tilde{A}\rarrow\tilde{B}$ and $\capO_{\tilde{A}}[\mathbf{r}]\rarrow\capO_{\tilde{B}}[\mathbf{r}]$. Hence the case $n=0$ is verified by Proposition \ref{prop:OA_construction_on_n_connected_cofibrations}. Let $N\geq 1$ and assume the proposition is true for each $0\leq n\leq N-1$. Consider part (a); let's verify it remains true for $n=N$. Let $\function{i}{X}{Y}$ be a $k_{n+1}$-connected generating cofibration or generating acyclic cofibration in $\SymSeq$ with the positive flat stable model structure, $Z_0$ a pushout $n$-cube in $\LtO$, and consider any left-hand pushout diagram of the
form
\begin{align}
\label{eq:gluing_on_cells_higher_cube}
\xymatrix{
  \capO\circ X\ar[d]_-{\id\circ i}\ar[r] &
  {Z_0}_\emptyset\ar[d]\\
  \capO\circ Y\ar[r] &
  {Z_1}_\emptyset
}\quad\quad
\xymatrix{
  Z_0\ar[d] \\
  Z_1
}\quad\quad
\xymatrix{
  \capO_{Z_0}[\mathbf{r}]\ar[d]\\
  \capO_{Z_1}[\mathbf{r}]
}
\end{align}
in $\LtO$ with the middle map of pushout $n$-cubes the associated pushout $(n+1)$-cube in $\LtO$. Assume each ${Z_0}_\emptyset\rarrow {Z_0}_{\{i\}}$ is a $k_i$-connected cofibration between cofibrant objects in $\LtO$ $(1\leq i\leq n)$ and $\capO_{{Z_0}_\emptyset}$ is $(-1)$-connected; let's verify that the associated right-hand maps  of $n$-cubes $(r\geq 0)$, each regarded as an $(n+1)$-cube in $\SymSeq^{\Sigma_r^\op}$, are $(k_1+\dots+k_{n+1}+n)$-cocartesian. If $A:=Z_0$ and $\tilde{A}:=A_{\{1,\dots,n\}}$, then by Proposition \ref{prop:filtering_OA} there are corresponding filtrations
\begin{align}
\label{eq:induced_filtration_diagram_for_studying_induced_map_higher_cubes}
\xymatrix{
  \colim\limits_{\ \ \powerset_1(\mathbf{n})}\capO_A^0[\mathbf{r}]\ar[dd]^{\xi_0}\ar[r] &
  \colim\limits_{\ \ \powerset_1(\mathbf{n})}\capO_A^1[\mathbf{r}]\ar@{.>}[d]^{\xi_1}\ar[r] &
  \colim\limits_{\ \ \powerset_1(\mathbf{n})}\capO_A^2[\mathbf{r}]\ar@{.>}[d]^{\xi_2}\ar[r] &
  \dots\ar[r] &
  \colim\limits_{\ \ \powerset_1(\mathbf{n})}\capO_A^\infty[\mathbf{r}]\ar[d]^{\xi_\infty}\\
  &
  \cdot\ar@{.>}[d]^{(*)_1}\ar[r] &
  \cdot\ar@{.>}[d]^{(*)_2}\ar[r] &
  \dots\ar[r] &
  \cdot\ar[d]^{(*)_\infty}\\
  \capO_{\tilde{A}}^0[\mathbf{r}]\ar[r]\ar@/_0.5pc/[ur] &
  \capO_{\tilde{A}}^1[\mathbf{r}]\ar[r] &
  \capO_{\tilde{A}}^2[\mathbf{r}]\ar[r] &
  \dots\ar[r] &
  \capO_{\tilde{A}}^\infty[\mathbf{r}]
}
\end{align}
together with induced maps $\xi_t$ and $(*)_t$ ($t\geq 1$) that make the diagram in $\SymSeq^{\Sigma_r^\op}$ commute; here, the upper diagrams are pushout diagrams and $\xi_\infty:=\colim_t\xi_t$, the maps $(*)_t$ are the obvious induced maps and $(*)_\infty:=\colim_t(*)_t$, the left-hand vertical map is naturally isomorphic to
\begin{align*}
  \colim\limits_{\ \ \powerset_1(\mathbf{n})}\capO_{Z_0}[\mathbf{r}]\longrightarrow
  \capO_{\tilde{Z}_0}[\mathbf{r}],
\end{align*}
and the right-hand vertical maps are naturally isomorphic to the diagram
\begin{align}
\label{eq:right_hand_vertical_maps_higher_cubes}
  \colim\limits_{\ \ \powerset_1(\mathbf{n})}\capO_{Z_1}[\mathbf{r}]\longrightarrow
  \bigl(\colim\limits_{\ \ \powerset_1(\mathbf{n})}\capO_{Z_1}[\mathbf{r}]\bigr)\cup
  \capO_{\tilde{Z}_0}[\mathbf{r}]\longrightarrow
  \capO_{\tilde{Z}_1}[\mathbf{r}];
\end{align}
here, $\tilde{Z}_0:={Z_0}_{\{1,\dots,n\}}$ and $\tilde{Z}_1:={Z_1}_{\{1,\dots,n\}}$. We want to show that the right-hand map in \eqref{eq:right_hand_vertical_maps_higher_cubes} is $(k_1+\dots+k_{n+1}+n)$-connected; since the horizontal maps in \eqref{eq:induced_filtration_diagram_for_studying_induced_map_higher_cubes} are monomorphisms, it suffices to verify each map $(*)_t$ is $(k_1+\dots+k_{n+1}+n)$-connected. The argument is by induction on $t$. The map $\xi_0$ factors as
\begin{align*}
  \colim\limits_{\ \ \powerset_1(\mathbf{n})}\capO_A^0[\mathbf{r}]\longrightarrow
  \bigl(\colim\limits_{\ \ \powerset_1(\mathbf{n})}\capO_A^0[\mathbf{r}]\bigr)\cup\capO_{\tilde{A}}^0[\mathbf{r}]\xrightarrow[\Iso]{(*)_0}
  \capO_{\tilde{A}}^0[\mathbf{r}]
\end{align*}
and since the right-hand map $(*)_0$ is an isomorphism, it is $(k_1+\dots+k_{n+1}+n)$-connected. Consider the commutative diagram
\begin{align}
\label{eq:filtration_quotients_diagram_for_analyzing_connectivity_higher_cubes}
\xymatrix{
  \colim\limits_{\ \ \powerset_1(\mathbf{n})}
  \capO_A^{t-1}[\mathbf{r}]\ar[d]^{\xi_{t-1}}\ar[r] &
  \colim\limits_{\ \ \powerset_1(\mathbf{n})}
  \capO_A^t[\mathbf{r}]\ar[d]^{\xi_t}\ar[r] &
  \bigl(\colim\limits_{\ \ \powerset_1(\mathbf{n})}
  \capO_A[\mathbf{t+r}]\bigr)\tensorcheck_{\Sigma_t}(Y/X)^{\tensorcheck t}\ar[d]_{\Iso}\ar@/^2pc/[dd]^{(\#)}\\
  \cdot\ar[d]^{(*)_{t-1}}\ar[r] &
  \cdot\ar[d]^{(*)_t}\ar[r] &
  \cdot\ar[d]_{(\#\#)}\\
  \capO_{\tilde{A}}^{t-1}[\mathbf{r}]\ar[r] &
  \capO_{\tilde{A}}^t[\mathbf{r}]\ar[r] &
  \capO_{\tilde{A}}[\mathbf{t+r}]\tensorcheck_{\Sigma_t}(Y/X)^{\tensorcheck t}
}
\end{align}
with rows cofiber sequences. Since we know $(Y/X)^{\tensorcheck t}$ is at least $k_{n+1}$-connected and
$
  \colim_{\powerset_1(\mathbf{n})}\capO_A[\mathbf{t+r}]\longrightarrow
  \capO_{\tilde{A}}[\mathbf{t+r}]
$
is $(k_1+\dots+k_{n}+n-1)$-connected by the induction hypothesis, it follows that $(\#)$ is $(k_1+\dots+k_{n+1}+n)$-connected, and hence $(\#\#)$ is also. Since the rows in \eqref{eq:filtration_quotients_diagram_for_analyzing_connectivity_higher_cubes} are cofiber sequences, it follows by induction on $t$ that $(*)_t$ is $(k_1+\dots+k_{n+1}+n)$-connected for each $t\geq 1$. This finishes the argument that the the right-hand maps of $n$-cubes $(r\geq 0)$ in \eqref{eq:gluing_on_cells_higher_cube}, each regarded as an $(n+1)$-cube in $\SymSeq^{\Sigma_r^\op}$, are $(k_1+\dots+k_{n+1}+n)$-cocartesian.

Consider a sequence $Z_0\rarrow Z_1\rarrow Z_2\rarrow\cdots$ of pushout $n$-cubes in $\LtO$ as in \eqref{eq:gluing_on_cells_higher_cube}, define $\tilde{Z}_n:={Z_n}_{\{1,\dots,n\}}$, $Z_\infty:=\colim_nZ_n$, and $\tilde{Z}_\infty:=\colim_n\tilde{Z}_n$, and consider the naturally occurring map $Z_0\rarrow Z_\infty$ of pushout $n$-cubes, regarded as a pushout $(n+1)$-cube in $\LtO$. Consider the associated left-hand diagram of the form
\begin{align}
\label{eq:gluing_on_cells_filtration_sequence_O_construction_higher_cubes}
\xymatrix{
  \colim\limits_{\ \ \powerset_1(\mathbf{n})}Z_0\ar[d]\ar[r] & \tilde{Z}_0\ar[d]\\
  \colim\limits_{\ \ \powerset_1(\mathbf{n})}Z_\infty\ar[r] & \tilde{Z}_\infty
}\quad\quad
\xymatrix{
  \colim\limits_{\ \ \powerset_1(\mathbf{n})}\capO_{Z_0}[\mathbf{r}]\ar[d]\ar[r] &
  \capO_{\tilde{Z}_0}[\mathbf{r}]\ar[d]\\
  \colim\limits_{\ \ \powerset_1(\mathbf{n})}\capO_{Z_\infty}[\mathbf{r}]\ar[r] &
  \capO_{\tilde{Z}_\infty}[\mathbf{r}]
}
\end{align}
in the underlying category $\SymSeq$ and the associated right-hand diagrams ($r\geq 0$) in $\SymSeq^{\Sigma_r^\op}$. Assume each ${Z_0}_\emptyset\rarrow {Z_0}_{\{i\}}$ is a $k_i$-connected cofibration between cofibrant objects in $\LtO$ $(1\leq i\leq n)$ and $\capO_{{Z_0}_\emptyset}$ is $(-1)$-connected. We want to show that the right-hand diagrams in \eqref{eq:gluing_on_cells_filtration_sequence_O_construction_higher_cubes} are $(k_1+\dots + k_{n+1}+n)$-cocartesian. Consider the associated commutative diagram
\begin{align}
\label{eq:filtration_quotients_diagram_for_analyzing_connectivity_for_Z_higher_cubes}
\xymatrix{
  \colim\limits_{\ \ \powerset_1(\mathbf{n})}
  \capO_{Z_0}[\mathbf{r}]\ar[dd]\ar[r] &
  \colim\limits_{\ \ \powerset_1(\mathbf{n})}
  \capO_{Z_1}[\mathbf{r}]\ar@{.>}[d]^{\eta_1}\ar[r] &
  \colim\limits_{\ \ \powerset_1(\mathbf{n})}
  \capO_{Z_2}[\mathbf{r}]\ar@{.>}[d]^{\eta_2}\ar[r] &
  \cdots\ar[r] &
  \colim\limits_{\ \ \powerset_1(\mathbf{n})}
  \capO_{Z_\infty}[\mathbf{r}]\ar[d]^{\eta_\infty}\\
  &
  \cdot
  \ar@{.>}[d]^{(\#)_1}\ar[r] &
  \cdot
  \ar@{.>}[d]^{(\#)_2}\ar[r] &
  \cdots\ar[r] &
  \cdot\ar[d]^{(\#)_\infty}\\
  \capO_{\tilde{Z}_0}[\mathbf{r}]\ar[r]\ar@/_0.5pc/[ur] &
  \capO_{\tilde{Z}_1}[\mathbf{r}]\ar[r] &
  \capO_{\tilde{Z}_2}[\mathbf{r}]\ar[r] &
  \cdots\ar[r] &
  \capO_{\tilde{Z}_\infty}[\mathbf{r}]
}
\end{align}
in $\SymSeq^{\Sigma_r^\op}$ and induced maps $\eta_t$ and $(\#)_t$ $(t\geq 1)$; here, the upper diagrams are pushout diagrams and $\eta_\infty:=\colim_t\eta_t$, the maps $(\#)_t$ are the obvious induced maps and $(\#)_\infty:=\colim_t(\#)_t$, and the right-hand vertical maps are naturally isomorphic to the diagram
\begin{align}
\label{eq:right_hand_vertical_maps_sequential_diagram_higher_cubes}
  \colim\limits_{\ \ \powerset_1(\mathbf{n})}
  \capO_{Z_\infty}[\mathbf{r}]
  \longrightarrow
  \bigl(
  \colim\limits_{\ \ \powerset_1(\mathbf{n})}
  \capO_{Z_\infty}[\mathbf{r}]
  \bigr)
  \cup\capO_{\tilde{Z}_0}[\mathbf{r}]
  \longrightarrow
  \capO_{\tilde{Z}_\infty}[\mathbf{r}]
\end{align}
We want to show that the right-hand map in \eqref{eq:right_hand_vertical_maps_sequential_diagram_higher_cubes} is $(k_1+\dots + k_{n+1}+n)$-connected; since the horizontal maps in \eqref{eq:filtration_quotients_diagram_for_analyzing_connectivity_for_Z_higher_cubes} are monomorphisms, it suffices to verify each map $(\#)_t$ is $(k_1+\dots + k_{n+1}+n)$-connected. The argument is by induction on $t$. The map $(\#)_t$ factors as
\begin{align}
\label{eq:factorization_of_maps_to_analyze_higher_cubes}
\xymatrix{
  \bigl(
  \colim\limits_{\ \ \powerset_1(\mathbf{n})}
  \capO_{Z_t}[\mathbf{r}]
  \bigr)
  \cup\capO_{\tilde{Z}_0}[\mathbf{r}]\ar[r] &
  \bigl(
  \colim\limits_{\ \ \powerset_1(\mathbf{n})}
  \capO_{Z_t}[\mathbf{r}]
  \bigr)
  \cup\capO_{\tilde{Z}_{t-1}}[\mathbf{r}]\ar[r] &
  \capO_{\tilde{Z}_t}[\mathbf{r}]
}
\end{align}
We know from above that $(\#)_1$ and the right-hand map in \eqref{eq:factorization_of_maps_to_analyze_higher_cubes} are $(k_1+\dots + k_{n+1}+n)$-connected for each $t\geq 1$, hence it follows by induction on $t$ that $(\#)_t$ is $(k_1+\dots + k_{n+1}+n)$-connected for each $t\geq 1$. This finishes the argument that the right-hand diagrams $(r\geq 0)$ in \eqref{eq:gluing_on_cells_filtration_sequence_O_construction_higher_cubes} are $(k_1+\dots + k_{n+1}+n)$-cocartesian in  $\SymSeq^{\Sigma_r^\op}$.

It follows from Proposition \ref{prop:factorization_into_n_connected_cofibration} that the pushout $(n+1)$-cube $A\rarrow B$ factors as $Z_0\xrightarrow{i_\lambda} Z_\lambda \xrightarrow{p} B$, a composition of pushout $(n+1)$-cubes in $\LtO$, starting with $Z_0=A$, where $i_\lambda$ is a (possibly transfinite) composition of pushout $n$-cubes as in \eqref{eq:gluing_on_cells_higher_cube} and $p$ is an objectwise weak equivalence. Consider the associated diagram
\begin{align*}
\xymatrix{
  \colim\limits_{\ \ \powerset_1(\mathbf{n})}
  \capO_{Z_0}[\mathbf{r}]\ar[d]\ar[r] &
  \capO_{\tilde{Z}_0}[\mathbf{r}]\ar[d]\ar@/^0.5pc/[dr]\\
  \colim\limits_{\ \ \powerset_1(\mathbf{n})}
  \capO_{Z_\lambda}[\mathbf{r}]\ar[d]^{\wequiv}\ar[r] &
  \bigl(
  \colim\limits_{\ \ \powerset_1(\mathbf{n})}
  \capO_{Z_\lambda}[\mathbf{r}]
  \bigr)
  \cup\capO_{\tilde{Z}_0}[\mathbf{r}]\ar[d]^{\wequiv}\ar[r]^-{(**)} &
  \capO_{\tilde{Z}_\lambda}[\mathbf{r}]\ar[d]^{\wequiv}\\
  \colim\limits_{\ \ \powerset_1(\mathbf{n})}
  \capO_B[\mathbf{r}]\ar[r] &
  \bigl(
  \colim\limits_{\ \ \powerset_1(\mathbf{n})}
  \capO_B[\mathbf{r}]
  \bigr)
  \cup\capO_{\tilde{Z}_0}[\mathbf{r}]\ar[r]^-{(*)} &
  \capO_{\tilde{B}}[\mathbf{r}]
}
\end{align*}
Noting that the bottom vertical arrows are weak equivalences, it follows that $(*)$ has the same connectivity as $(**)$, which finishes the proof that the right-hand diagrams $(r\geq 0)$ in \eqref{eq:colim_punctured_cube_associated_diagrams} are $(k_1+\dots + k_{n+1}+n)$-cocartesian in $\SymSeq^{\Sigma_r^\op}$.  In particular, taking $r=0$ verifies that the left-hand diagram in \eqref{eq:colim_punctured_cube_associated_diagrams} is $(k_1+\dots + k_{n+1}+n)$-cocartesian in $\SymSeq$.
\end{proof}

\begin{thm}[Theorem \ref{thm:higher_homotopy_excision} restated]
Let $\capO$ be an operad in $\capR$-modules and $W$ a nonempty finite set. Let $\capX$ be a strongly $\infty$-cocartesian $W$-cube of $\capO$-algebras (resp. left $\capO$-modules). Assume that $\capR,\capO,\capX_\emptyset$ are $(-1)$-connected. Let $k_i\geq -1$ for each $i\in W$. If each $\capX_\emptyset\rarrow\capX_{\{i\}}$ is $k_i$-connected ($i\in W$),  then
\begin{itemize}
\item[(a)] $\capX$ is $l$-cocartesian in $\ModR$ (resp. $\SymSeq$) with $l=|W|-1+\sum_{i\in W}k_i$,
\item[(b)] $\capX$ is $k$-cartesian with $k=\sum_{i\in W}k_i$.
\end{itemize}
\end{thm}

\begin{proof}
It suffices to consider the case of left $\capO$-modules. It is enough to treat the special case where $\capX$ is a pushout cofibration $W$-cube in $\LtO$. The case $|W|=1$ is trivial and the case $|W|\geq 2$ follows from Theorem \ref{thm:pushout_cofibration_cube_homotopical_analysis}.
\end{proof}

\begin{thm}[Theorem \ref{thm:homotopy_excision} restated]
Let $\capO$ be an operad in $\capR$-modules. Let $\capX$ be a homotopy pushout square of $\capO$-algebras (resp. left $\capO$-modules) of the form
\begin{align*}
\xymatrix{
  \capX_\emptyset\ar[r]\ar[d] & \capX_{\{1\}}\ar[d]\\
  \capX_{\{2\}}\ar[r] & \capX_{\{1,2\}}
}
\end{align*}
Assume that $\capR,\capO,\capX_\emptyset$ are $(-1)$-connected. Consider any $k_1,k_2\geq -1$. If each $\capX_\emptyset\rarrow \capX_{\{i\}}$ is $k_i$-connected ($i=1,2$), then
\begin{itemize}
\item[(a)] $\capX$ is $l$-cocartesian in $\ModR$ (resp. $\SymSeq$) with $l=k_1+k_2 +1$,
\item[(b)] $\capX$ is $k$-cartesian with $k=k_1+k_2$.
\end{itemize}
\end{thm}

\begin{proof}
This is the special case $|W| = 2$ of Theorem \ref{thm:higher_homotopy_excision}.
\end{proof}

\subsection{Proof of the higher Blakers-Massey theorem for $\AlgO$ and $\LtO$}

The purpose of this section is to prove the Blakers-Massey theorems \ref{thm:blakers_massey} and \ref{thm:higher_blakers_massey}. We first show that Blakers-Massey for square diagrams (Theorem \ref{thm:blakers_massey}) follows fairly easily from the higher homotopy excision result proved in the previous section.

\begin{thm}[Theorem \ref{thm:blakers_massey} restated]
Let $\capO$ be an operad in $\capR$-modules. Let $\capX$ be a commutative square of $\capO$-algebras (resp. left $\capO$-modules) of the form
\begin{align*}
\xymatrix{
  \capX_\emptyset\ar[r]\ar[d] & \capX_{\{1\}}\ar[d]\\
  \capX_{\{2\}}\ar[r] & \capX_{\{1,2\}}
}
\end{align*}
Assume that $\capR,\capO,\capX_\emptyset$ are $(-1)$-connected. Consider any $k_1,k_2\geq -1$, and  $k_{12}\in\ZZ$. If each $\capX_\emptyset\rarrow \capX_{\{i\}}$ is $k_i$-connected ($i=1,2$) and $\capX$ is $k_{12}$-cocartesian, then $\capX$ is $k$-cartesian, where $k$ is the minimum of $k_{12}-1$ and $k_{1}+k_{2}$.
\end{thm}

\begin{proof}
It suffices to consider the case of left $\capO$-modules. Let $W:=\{1,2\}$. It is enough to consider the special case where $\capX$ is a cofibration $W$-cube in $\LtO$. Consider the induced maps
\begin{align*}
\xymatrix{
  \colim\nolimits^{\SymSeq}_{\powerset_1(W)}\capX\ar[r]^-{(*)} &
  \colim\nolimits^{\LtO}_{\powerset_1(W)}\capX\ar[r]^-{(**)} &
  \colim\nolimits^{\LtO}_{\powerset(W)}\Iso\capX_W
}
\end{align*}
We know that $(*)$ is $(k_1+k_2+1)$-connected by homotopy excision (Theorem \ref{thm:homotopy_excision}) and $(**)$ is $k_{12}$-connected by assumption. Hence by Proposition \ref{prop:connectivity_estimates_for_composition_of_maps}(a) the composition is $l$-connected, where $l$ is the minimum of $k_1+k_2+1$ and $k_{12}$; in other words, we have verified that $\capX$ is $l$-cocartesian in $\SymSeq$, and Proposition \ref{prop:comparing_cocartesian_and_cartesian_estimates_in_ModR}(a) finishes the proof.

\end{proof}

We now turn to the proof of the higher Blakers-Massey result (Theorem~\ref{thm:higher_blakers_massey}). Our approach follows that used by Goodwillie at the corresponding point in \cite{Goodwillie_calc2}.

The following is an important warm-up calculation for Proposition \ref{prop:cocartesian_estimates_for_induction_argument}.

\begin{prop}
\label{prop:warmup_cocartesian_estimates_for_induction_argument}
Let $\capO$ be an operad in $\capR$-modules and $W$ a nonempty finite set. Let $\capX$ be a cofibration $W$-cube of $\capO$-algebras (resp. left $\capO$-modules). Assume that
\begin{itemize}
\item[(i)] for each nonempty subset $V\subset W$, the $V$-cube $\partial_\emptyset^V\capX$ (formed by all maps in $\capX$ between $\capX_\emptyset$ and $\capX_V$) is $k_V$-cocartesian,
\item[(ii)] $k_{U}\leq k_V$ for each $U\subset V$.
\end{itemize}
Then, for every $U\subsetneqq V\subset W$, the $(V-U)$-cube $\partial_U^V\capX$ is $k_{V-U}$-cocartesian.
\end{prop}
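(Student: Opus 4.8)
The plan is to induct on $|U|$. The base case $|U|=0$ is precisely hypothesis (i): for $\emptyset\subsetneq V\subset W$ the cube $\partial_\emptyset^V\capX$ is $k_V$-cocartesian, and $k_V=k_{V-\emptyset}$. So fix $m\geq 1$, suppose the conclusion holds for all $U'\subsetneq V'\subset W$ with $|U'|<m$, and consider $U\subsetneq V\subset W$ with $|U|=m$. Choose $w\in U$ and put $U':=U-\{w\}$, so $|U'|=m-1$.

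The key point is that the $(V-U')$-cube $\partial_{U'}^V\capX$, when we split off the coordinate direction indexed by $w$ (which lies in $V-U'$), is exactly the map of $(V-U)$-cubes
\[
  \partial_{U'}^{V-\{w\}}\capX \longrightarrow \partial_U^V\capX,
\]
its ``$w=0$'' face being $\partial_{U'}^{V-\{w\}}\capX$ and its ``$w=1$'' face being $\partial_{U'\cup\{w\}}^V\capX=\partial_U^V\capX$; here I am using the set-theoretic identities $U'\cup\{w\}=U$ and $(V-\{w\})-U'=V-U$, both of which hold because $w\in U$. I will transfer cocartesian-ness from the larger cube $\partial_{U'}^V\capX$ to its ``$w=1$'' face $\partial_U^V\capX$ via Proposition~\ref{prop:map_of_cubical_diagrams}(a).

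For that I need both the total cube and the source cube to be $k_{V-U}$-cocartesian. First, applying the induction hypothesis to $U'\subsetneq V$ (valid since $|U'|=m-1<m$ and $U'\subset U\subsetneq V$), the $(V-U')$-cube $\partial_{U'}^V\capX$ is $k_{V-U'}$-cocartesian; since $V-U\subset V-U'$, hypothesis (ii) gives $k_{V-U}\leq k_{V-U'}$, so this cube is in particular $k_{V-U}$-cocartesian. Second, applying the induction hypothesis to $U'\subsetneq V-\{w\}$ (valid since $|U'|<m$ and, as $w\notin U'\subset V$, $U'\subsetneq V-\{w\}$), the $(V-U)$-cube $\partial_{U'}^{V-\{w\}}\capX$ is $k_{(V-\{w\})-U'}=k_{V-U}$-cocartesian. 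Proposition~\ref{prop:map_of_cubical_diagrams}(a), applied to the displayed map of $(V-U)$-cubes — whose source cube and whose associated total $(V-U')$-cube are both $k_{V-U}$-cocartesian — then shows the target $\partial_U^V\capX$ is $k_{V-U}$-cocartesian, completing the induction.

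I do not expect a genuine obstacle here: this is Goodwillie's argument from \cite{Goodwillie_calc2} carried over verbatim, and the cited cubical connectivity estimates (Propositions~\ref{prop:connectivity_estimates_for_composition_of_maps}--\ref{prop:composed_map_of_cubical_diagrams}) do all the homotopical work. The one step that genuinely must be invoked, and should not be glossed over, is the monotonicity hypothesis (ii): without it one cannot bring the connectivity of $\partial_{U'}^V\capX$ down to the common value $k_{V-U}$ needed to apply Proposition~\ref{prop:map_of_cubical_diagrams}(a).
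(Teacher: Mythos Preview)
Your proof is correct and follows essentially the same approach as the paper: induct on $|U|$, remove an element $w\in U$, view $\partial_{U-\{w\}}^V\capX$ as the map of $(V-U)$-cubes $\partial_{U-\{w\}}^{V-\{w\}}\capX\rarrow\partial_U^V\capX$, apply the induction hypothesis to both the source and the total cube, and use Proposition~\ref{prop:map_of_cubical_diagrams}(a) together with the monotonicity hypothesis~(ii) to conclude. The only cosmetic difference is that the paper phrases the decomposition as a ``composition of cubes'' rather than a ``map of cubes,'' but the content is identical.
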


\begin{proof}
The argument is by induction on $|U|$. The case $|U|=0$ is true by assumption. Let $n\geq 1$ and assume the proposition is true for each $0\leq|U|<n$. Let's verify it remains true for $|U|=n$. Let $u\in U$ and note that $\partial_{U-\{u\}}^V\capX$ can be written as the composition of cubes
\begin{align*}
\xymatrix{
  \partial_{U-\{u\}}^{V-\{u\}}\capX\ar[r] &
  \partial_U^V\capX
}
\end{align*}
We know by the induction assumption that the composition of cubes is $k_{(V-U)\cup\{u\}}$-cocartesian and the left-hand cube is $k_{V-U}$-cocartesian. Since $k_{V-U}\leq k_{(V-U)\cup\{u\}}$ by assumption, it follows from Proposition \ref{prop:map_of_cubical_diagrams}(a) that the right-hand cube is $k_{V-U}$-cocartesian which finishes the proof.
\end{proof}

\begin{defn}
Let $\capO$ be an operad in $\capR$-modules and $W$ a nonempty finite set. Let $\capX$ be a $W$-cube of $\capO$-algebras (resp. left $\capO$-modules) and consider any subset $\capB\subset\powerset(W)$.
\begin{itemize}
\item A subset $\capA\subset\capB$ is \emph{convex} if every element of $\capB$ which is less than an element of $\capA$ is in $\capA$.
\item Define $\capA^W_\mathrm{min}:=\{V\subset W: |V|\leq 1\}$ and $\capA^W_\mathrm{max}:=\powerset(W)$.
\item For each convex subset $\capA\subset\powerset(W)$, the $W$-cube $\capX_\capA$ is defined objectwise by $(\capX_\capA)_U := \colim_{\capA\cap\powerset(U)}\capX\Iso\colim_{T\in\capA,\, T\subset U}\capX_T$.
\end{itemize}
\end{defn}

The following proposition will be needed in the proof of Proposition \ref{prop:properties_of_the_X_sub_A_construction} below.

\begin{prop}
\label{prop:colim_over_punctured_cube_of_X_sub_A_construction}
Let $\capO$ be an operad in $\capR$-modules and $W$ a finite set. Let $\capX$ be a $W$-cube of $\capO$-algebras (resp. left $\capO$-modules) and consider any convex subset $\capA\subset\powerset(W)$. Then for each nonempty subset $V\subset W$, there is a natural isomorphism
\begin{align*}
  \colim_{\powerset_1(V)}\capX_\capA\Iso
  \colim_{\capA\cap\powerset_1(V)}\capX=
  \colim_{T\in\capA,\,T\subsetneqq V}\capX_T
\end{align*}
\end{prop}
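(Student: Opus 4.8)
The plan is to identify the $W$-cube $\capX_\capA$ as a left Kan extension and then to use that left Kan extensions interact well with colimits. Let $\function{j}{\capA}{\powerset(W)}$ denote the inclusion of $\capA$ as a full subposet, and let $\capX|_\capA$ be the restriction of $\capX$ along $j$. For each $U\subset W$ the comma category $j\downarrow U$ is exactly the poset $\capA\cap\powerset(U)$, so the pointwise formula for left Kan extension gives $\bigl(\Lan_j(\capX|_\capA)\bigr)_U\Iso\colim_{\capA\cap\powerset(U)}\capX=(\capX_\capA)_U$, compatibly with the structure maps; hence $\capX_\capA\Iso\Lan_j(\capX|_\capA)$. (Convexity of $\capA$ is not actually needed here.)

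Fix a nonempty $V\subset W$ and let $\function{j'}{\capA\cap\powerset_1(V)}{\powerset_1(V)}$ be the inclusion. The key observation — and the only place the hypothesis enters — is that for every $U\in\powerset_1(V)$, i.e. every $U\subsetneqq V$, one has
\begin{align*}
  \{T\in\capA:T\subset U\}=\{T\in\capA\cap\powerset_1(V):T\subset U\},
\end{align*}
simply because $T\subset U\subsetneqq V$ forces $T\subsetneqq V$. Thus $j\downarrow U$ and $j'\downarrow U$ agree for all $U\in\powerset_1(V)$, and therefore restricting $\capX_\capA\Iso\Lan_j(\capX|_\capA)$ to the full subcategory $\powerset_1(V)\subset\powerset(W)$ yields $\Lan_{j'}\bigl(\capX|_{\capA\cap\powerset_1(V)}\bigr)$.

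Finally I would take $\colim_{\powerset_1(V)}$ of this identification. Since a left Kan extension along $j'$ followed by a colimit is the colimit of the original diagram — formally, $\colim_{\powerset_1(V)}\circ\,\Lan_{j'}\Iso\colim_{\capA\cap\powerset_1(V)}$, which one deduces either from pseudofunctoriality of $\Lan$ (writing $\colim_{\powerset_1(V)}=\Lan_{(\powerset_1(V)\rarrow *)}$ and $\Lan_g\circ\Lan_{j'}\Iso\Lan_{g\circ j'}$) or directly from the adjunction between $\Lan_{j'}$ and restriction along $j'$ (a cocone on $\Lan_{j'}G$ under a constant diagram is the same datum as a cocone on $G$) — this gives $\colim_{\powerset_1(V)}\capX_\capA\Iso\colim_{\capA\cap\powerset_1(V)}\capX$, as claimed. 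All the colimits here exist in $\AlgO$ and $\LtO$ by Proposition~\ref{prop:basic_properties_LTO}, and $\powerset(W)$ is finite, so there are no size issues.

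I do not expect a real obstacle: the statement is formal once $\capX_\capA$ is recognized as $\Lan_j(\capX|_\capA)$. The one genuinely load-bearing step is the set identity in the second paragraph, which is precisely the assertion that restricting the Kan extension to the \emph{punctured} cube over $V$ (rather than all of $\powerset(V)$) still produces a Kan extension. Someone wishing to avoid Kan-extension language can instead build mutually inverse comparison maps directly: in one direction, the structure maps $\capX_T\rarrow\colim_{\capA\cap\powerset_1(V)}\capX$ for $T\in\capA$ with $T\subset U\in\powerset_1(V)$ assemble over $\colim_{\powerset_1(V)}$ (legitimate since such $T$ satisfy $T\subsetneqq V$); in the other, one uses that $(\capX_\capA)_T\Iso\capX_T$ whenever $T\in\capA$ (as $T$ is then terminal in $\capA\cap\powerset(T)$) together with the canonical maps into $\colim_{\powerset_1(V)}\capX_\capA$.
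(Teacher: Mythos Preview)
Your proposal is correct. The paper's proof is a single sentence: it just notes that the indexing sets $\{T\in\capA:T\subset U,\ U\subsetneqq V\}$ and $\{T\in\capA:T\subsetneqq V\}$ coincide---which is exactly the set identity you isolate in your second paragraph. In effect you have embedded the paper's entire argument as one step inside a longer and more explicitly categorical treatment via left Kan extensions and their composition. Both routes are valid; yours makes the underlying mechanism (pointwise Kan extension, then $\colim\circ\Lan_{j'}\Iso\colim$) fully explicit, while the paper simply trusts the reader to see that an iterated colimit over a directed union of sub-posets equals the colimit over the union. Your observation that convexity of $\capA$ is never used is also correct: the paper imposes it only because $\capX_\capA$ is defined that way, but neither proof invokes it.
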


\begin{proof}
This is because the indexing sets $\{T\in\capA:T\subset U,\, U\subsetneqq V\}$ and $\{T\in\capA:T\subsetneqq V\}$ are the same.
\end{proof}

The following proposition explains the key properties of the $\capX_\capA$ construction and its relationship to $\capX$; it is through these properties that the $\capX_\capA$ construction is useful and meaningful.

\begin{prop}
\label{prop:properties_of_the_X_sub_A_construction}
Let $\capO$ be an operad in $\capR$-modules and $W$ a nonempty finite set. Let $\capX$ be a $W$-cube of $\capO$-algebras (resp. left $\capO$-modules) and consider any convex subset $\capA^W_\mathrm{min}\subset\capA\subset\capA^W_\mathrm{max}$.
\begin{itemize}
\item[(a)] There are natural isomorphisms $\partial_\emptyset^V\capX_\capA\Iso\partial_\emptyset^V\capX$ if $V\in\capA$,
\item[(b)] The $V$-cube $\partial_\emptyset^V\capX_\capA$ is $\infty$-cocartesian if $V\notin\capA$ and $\capX$ is a cofibration $W$-cube in $\AlgO$.
\end{itemize}
\end{prop}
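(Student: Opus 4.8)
The plan is to handle (a) by directly identifying indexing categories, and (b) by first upgrading $\capX_\capA$ to a cofibration cube and then reading off the $\infty$-cocartesian property from Proposition~\ref{prop:colim_over_punctured_cube_of_X_sub_A_construction}.

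For (a), suppose $V\in\capA$. Since $\capA$ is convex it is downward closed, so for every $U\subset V$ every subset $T\subset U$ also lies in $\capA$; hence $\capA\cap\powerset(U)=\powerset(U)$. Therefore the defining formula gives $(\capX_\capA)_U=\colim_{\capA\cap\powerset(U)}\capX=\colim_{\powerset(U)}\capX\cong\capX_U$, and these isomorphisms are natural in $U\subset V$ because the identifications of indexing posets are. This is precisely a natural isomorphism $\partial_\emptyset^V\capX_\capA\cong\partial_\emptyset^V\capX$ of $V$-cubes.

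For (b), the first step is to observe that $\capX_\capA$ is a cofibration $W$-cube whenever $\capX$ is. By Proposition~\ref{prop:colim_over_punctured_cube_of_X_sub_A_construction} together with the formula $(\capX_\capA)_U\cong\colim_{\capA\cap\powerset(U)}\capX$, the map $\colim_{\powerset_1(U)}\capX_\capA\rarrow(\capX_\capA)_U$ is naturally isomorphic to $\colim_{\capA\cap\powerset_1(U)}\capX\rarrow\colim_{\capA\cap\powerset(U)}\capX$. If $U\notin\capA$ then $\capA\cap\powerset_1(U)=\capA\cap\powerset(U)$ and this is an isomorphism, in particular a cofibration; if $U\in\capA$ then, as in (a), $\capA\cap\powerset(U)=\powerset(U)$ and $\capA\cap\powerset_1(U)=\powerset_1(U)$, so the map is the latching map $\colim_{\powerset_1(U)}\capX\rarrow\capX_U$ of $\capX$, which is a cofibration by hypothesis (for $U=\emptyset$ this just says $\capX_\emptyset$ is cofibrant, using $\emptyset\in\capA$). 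Hence $\capX_\capA$ is a cofibration cube, and so is each face $\partial_\emptyset^V\capX_\capA$.

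Now fix $V\notin\capA$. Since $\partial_\emptyset^V\capX_\capA$ is a cofibration cube, the canonical map $\hocolim_{\powerset_1(V)}\partial_\emptyset^V\capX_\capA\rarrow\colim_{\powerset_1(V)}\partial_\emptyset^V\capX_\capA$ is a weak equivalence: the strict colimit of a cofibration cube over its punctured cube computes the homotopy colimit. On the other hand $V\notin\capA$ forces $\capA\cap\powerset_1(V)=\capA\cap\powerset(V)$, so by Proposition~\ref{prop:colim_over_punctured_cube_of_X_sub_A_construction} the map $\colim_{\powerset_1(V)}\capX_\capA\rarrow(\capX_\capA)_V$ is the isomorphism $\colim_{\capA\cap\powerset_1(V)}\capX\cong\colim_{\capA\cap\powerset(V)}\capX$. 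Composing, $\hocolim_{\powerset_1(V)}\partial_\emptyset^V\capX_\capA\rarrow(\capX_\capA)_V$ is a weak equivalence, i.e.\ $\partial_\emptyset^V\capX_\capA$ is $\infty$-cocartesian. The only step that needs genuine care, rather than bookkeeping with the convex sets $\capA$, is this comparison of strict and homotopy colimits over punctured cubes; it is exactly where the cofibration-cube hypothesis on $\capX$---and the fact, established above, that it passes to $\capX_\capA$---is needed, which is why (b) requires that hypothesis while (a) does not.
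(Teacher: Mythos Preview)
Your proof is correct and follows essentially the same approach as the paper: both identify indexing posets via convexity for (a), and both use Proposition~\ref{prop:colim_over_punctured_cube_of_X_sub_A_construction} together with the equality $\capA\cap\powerset_1(V)=\capA\cap\powerset(V)$ for $V\notin\capA$ to see that the latching map of $\partial_\emptyset^V\capX_\capA$ is an isomorphism. Your version is in fact more careful than the paper's, since you explicitly verify that $\capX_\capA$ is a cofibration cube so that the strict colimit over $\powerset_1(V)$ computes the homotopy colimit; the paper's argument states only that the strict map is an isomorphism and leaves this comparison implicit.
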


\begin{proof}
Consider part (a). Let $V\in\capA$. Then we know that $\powerset(V)\subset\capA$, by $\capA$ convex, and hence $\capA\cap\powerset(V)=\powerset(V)$. It follows that $(\capX_\capA)_V=\colim_{\powerset(V)}\capX\Iso\capX_V$. The same argument shows that $(\capX_\capA)_{V'}\Iso\capX_{V'}$, for each $V'\subset V$. Consider part (b). Let $V\subset W$ and $V\notin\capA$. Then $\capA\cap\powerset(V)=\capA\cap\powerset_1(V)$ and hence the composition
\begin{align*}
  \colim_{\powerset_1(V)}\partial_\emptyset^V\capX_\capA=
  \colim_{\powerset_1(V)}\capX_\capA\Iso
  \colim_{\capA\cap\powerset_1(V)}\capX=
  \colim_{\capA\cap\powerset(V)}\capX=
  (\capX_\capA)_V=(\partial_\emptyset^V\capX_\capA)_V
\end{align*}
is an isomorphism which finishes the proof of part (b).
\end{proof}

The following proposition shows that $\capX_\capA$ inherits several of the homotopical properties of $\capX$.

\begin{prop}
\label{prop:cocartesian_estimates_for_induction_argument}
Let $\capO$ be an operad in $\capR$-modules and $W$ a nonempty finite set. Let $\capX$ be a cofibration $W$-cube of $\capO$-algebras (resp. left $\capO$-modules) and consider any convex subset $\capA^W_\mathrm{min}\subset\capA\subset\capA^W_\mathrm{max}$. Assume that
\begin{itemize}
\item[(i)] for each nonempty subset $V\subset W$, the $V$-cube $\partial_\emptyset^V\capX$ (formed by all maps in $\capX$ between $\capX_\emptyset$ and $\capX_V$) is $k_V$-cocartesian,
\item[(ii)] $k_{U}\leq k_V$ for each $U\subset V$.
\end{itemize}
Then, for each $U\subsetneqq V\subset W$, the $(V-U)$-cube $\partial_U^V\capX_\capA$ is $k_{V-U}$-cocartesian.
\end{prop}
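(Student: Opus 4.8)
The plan is to argue by induction on $|U|$, mimicking the proof of Proposition \ref{prop:warmup_cocartesian_estimates_for_induction_argument} in the inductive step and replacing its base case (which invoked hypothesis (i) directly) by an appeal to Proposition \ref{prop:properties_of_the_X_sub_A_construction}. As elsewhere it suffices to treat left $\capO$-modules, since everything below uses only formal properties of cubes common to $\AlgO$ and $\LtO$. The real content is the base case; the inductive step is essentially word-for-word the warm-up argument.

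The base case is $|U| = 0$: one must show that $\partial_\emptyset^V\capX_\capA$ is $k_V$-cocartesian for each nonempty $V \subset W$. Here I would split into two cases according to whether $V$ lies in $\capA$. If $V \in \capA$, then Proposition \ref{prop:properties_of_the_X_sub_A_construction}(a) supplies a natural isomorphism $\partial_\emptyset^V\capX_\capA \Iso \partial_\emptyset^V\capX$, and hypothesis (i) says the latter is $k_V$-cocartesian. If $V \notin \capA$, then---using that $\capX$ is a cofibration $W$-cube---Proposition \ref{prop:properties_of_the_X_sub_A_construction}(b) gives that $\partial_\emptyset^V\capX_\capA$ is $\infty$-cocartesian; since a weak equivalence is $k$-connected for every integer $k$, this is in particular $k_V$-cocartesian, for any value of $k_V$ (no sign hypothesis is needed).

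For the inductive step, fix $n \geq 1$, assume the claim whenever $|U| < n$, and let $U \subsetneqq V \subset W$ with $|U| = n$. Pick $u \in U$ and, exactly as in the proof of Proposition \ref{prop:warmup_cocartesian_estimates_for_induction_argument} (cf.\ Remark \ref{rem:map_of_n_cubes}), regard $\partial_{U-\{u\}}^V\capX_\capA$ as the map of $(V-U)$-cubes $\partial_{U-\{u\}}^{V-\{u\}}\capX_\capA \rarrow \partial_U^V\capX_\capA$. Each of the pairs $(U-\{u\},V)$ and $(U-\{u\},V-\{u\})$ is a proper inclusion of subsets of $W$ whose first term has size $n-1$, so the induction hypothesis applies to both: the composite cube $\partial_{U-\{u\}}^V\capX_\capA$ is $k_{(V-U)\cup\{u\}}$-cocartesian and the source cube $\partial_{U-\{u\}}^{V-\{u\}}\capX_\capA$ is $k_{V-U}$-cocartesian. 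Since $k_{V-U} \leq k_{(V-U)\cup\{u\}}$ by hypothesis (ii), the composite is a fortiori $k_{V-U}$-cocartesian, and Proposition \ref{prop:map_of_cubical_diagrams}(a) then yields that the target cube $\partial_U^V\capX_\capA$ is $k_{V-U}$-cocartesian, completing the induction. I expect no genuine obstacle here; the only step requiring a moment's care is the $V\notin\capA$ branch of the base case, where one uses that ``$\infty$-cocartesian'' is strictly stronger than ``$k_V$-cocartesian.''
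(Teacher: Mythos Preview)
Your proposal is correct and follows exactly the approach in the paper: reduce to the base case $|U|=0$ by the induction argument of Proposition~\ref{prop:warmup_cocartesian_estimates_for_induction_argument}, and handle the base case via Proposition~\ref{prop:properties_of_the_X_sub_A_construction} (splitting on whether $V\in\capA$). The paper's proof simply compresses all of this into two sentences, but the logic is identical.
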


\begin{proof}
By the induction argument in the proof of Proposition \ref{prop:warmup_cocartesian_estimates_for_induction_argument}, it suffices to verify that the $V$-cube $\partial_\emptyset^V\capX_\capA$ is $k_{V}$-cocartesian, for each nonempty subset $V\subset W$, and this follows immediately from Proposition \ref{prop:properties_of_the_X_sub_A_construction}.
\end{proof}

The following proposition is proved in Goodwillie \cite[2.8]{Goodwillie_calc2} in the context of spaces, and exactly the same argument gives a proof in the context of structured ring spectra; this is an exercise left to the reader.

\begin{prop}
\label{prop:cofibration_cube_properties}
Let $\capO$ be an operad in $\capR$-modules and $W$ a nonempty finite set. Let $\capX$ be a cofibration $W$-cube of $\capO$-algebras (resp. left $\capO$-modules) and consider any convex subset $\capA\subset\powerset(W)$.
\begin{itemize}
\item[(a)] For each inclusion $\capA'\subset\capA$ of convex subsets of $\powerset(W)$, the induced map $\colim_{\capA'}\capX\rarrow\colim_{\capA}\capX$ is a cofibration in $\AlgO$ (resp. $\LtO$),
\item[(b)] For any convex subsets $\capA,\capB$ of $\powerset(W)$, the diagram
\begin{align*}
\xymatrix{
  \colim_{\capA\cap\capB}\capX\ar[r]\ar[d] & \colim_{\capB}\capX\ar[d]\\
  \colim_{\capA}\capX\ar[r] & \colim_{\capA\cup\capB}\capX
}
\end{align*}
is a pushout diagram of cofibrations in $\AlgO$ (resp. $\LtO$).
\item[(c)] If $A\in\capA$ is maximal and the cofibration
$
  \colim_{\powerset_1(A)}\capX\rarrow\colim_{\powerset(A)}\capX\Iso\capX_A
$
is $k_A$-connected, then the cofibration
$$
  \colim_{\capA-\{A\}}\capX\rarrow\colim_{\capA}\capX
$$
is $k_A$-connected.
\end{itemize}
\end{prop}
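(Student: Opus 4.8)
The plan is to build every convex subset of $\powerset(W)$ — these are exactly the down-sets — out of smaller ones by adjoining maximal elements one at a time, and to control how $\colim_{(-)}\capX$ changes under each such adjunction. The engine is the following basic observation: if $\capC\subsetneq\capC'$ are convex subsets of $\powerset(W)$ with $\capC'=\capC\cup\{A\}$ for some $A$ maximal in $\capC'$ (equivalently, $A\notin\capC$ and $\powerset_1(A)\subset\capC$), then $\capC'=\capC\cup\powerset(A)$ with $\capC\cap\powerset(A)=\powerset_1(A)$ and $\colim_{\powerset(A)}\capX\Iso\capX_A$, so a direct comparison of cocones exhibits $\colim_{\capC'}\capX$ as the pushout of $\colim_{\capC}\capX\larrow\colim_{\powerset_1(A)}\capX\rarrow\capX_A$. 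Since $\capX$ is a cofibration cube, $\colim_{\powerset_1(A)}\capX\rarrow\capX_A$ is a cofibration, so $\colim_{\capC}\capX\rarrow\colim_{\capC'}\capX$ is a cobase change of a cofibration, hence a cofibration.

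For (a): given convex $\capA'\subset\capA$, enumerate $\capA\setminus\capA'$ in nondecreasing order of cardinality, producing a chain $\capA'=\capC_0\subsetneq\cdots\subsetneq\capC_m=\capA$ in which each $\capC_j=\capC_{j-1}\cup\{A_j\}$ with $A_j$ maximal in $\capC_j$ (its proper subsets lie in $\capA$ with smaller cardinality, so already appear in $\capC_{j-1}$); by the basic observation each step is a cofibration, and composites of cofibrations are cofibrations. For (b) I would induct on $|\capB\setminus\capA|$: the base case $\capB\subset\capA$ is immediate, and otherwise I pick $A\in\capB\setminus\capA$ maximal in $\capB$ — one exists, since passing from any element of $\capB\setminus\capA$ to a larger element of $\capB$ keeps it outside $\capA$ by convexity, and $W$ is finite. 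Then $\capB-\{A\}$ and $(\capA\cup\capB)-\{A\}=\capA\cup(\capB-\{A\})$ are convex, $A$ is maximal in both $\capB$ and $\capA\cup\capB$, $\powerset_1(A)\subset\capB-\{A\}$, and $\capA\cap(\capB-\{A\})=\capA\cap\capB$; applying the basic observation to $\capB-\{A\}\subset\capB$ and to $\capA\cup(\capB-\{A\})\subset\capA\cup\capB$, the induction hypothesis to the pair $(\capA,\capB-\{A\})$, and the pasting law for pushouts identifies $\colim_{\capA\cup\capB}\capX$ with $\colim_{\capA}\capX\cup_{\colim_{\capA\cap\capB}\capX}\colim_{\capB}\capX$; all four maps of the square are cofibrations by (a).

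For (c): since $A$ is maximal in $\capA$, the set $\capA-\{A\}$ is convex with $\powerset_1(A)\subset\capA-\{A\}$, so the basic observation presents $\colim_{\capA-\{A\}}\capX\rarrow\colim_{\capA}\capX$ as the cobase change of the $k_A$-connected cofibration $\colim_{\powerset_1(A)}\capX\rarrow\capX_A$. Thus (c) reduces to the assertion that, in $\AlgO$ (resp. $\LtO$), a cobase change of a $k_A$-connected cofibration along an arbitrary cofibration is again $k_A$-connected. This is the one step that is not purely categorical: it is the structured ring spectra counterpart of the elementary space-level fact that a cobase change of a relative CW inclusion with cells of dimension $>k_A$ is again such, and I would deduce it from the cell-attachment/filtration analysis of the preceding subsections — factoring the cofibration, as in Proposition~\ref{prop:factorization_into_n_connected_cofibration}, as a transfinite composite of pushouts of free maps $\capO\circ(i)$ on $k_A$-connected generating (acyclic) cofibrations $i$, and invoking the connectivity estimate of Proposition~\ref{prop:pushout_of_n_connected_cofibration} at each stage. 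I expect this last point — that cobase changes preserve $n$-connectivity of cofibrations in $\AlgO$ and $\LtO$ — to be the main obstacle; everything else is bookkeeping with convex subsets and pastings of pushout squares.
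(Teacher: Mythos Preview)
Your argument is essentially the one the paper defers to Goodwillie: build up a convex subset by adjoining maximal elements one at a time, and control the change in $\colim_{(-)}\capX$ via the pushout identification you call the basic observation. Parts (a) and (b) are purely categorical and your treatment is correct; the pasting-of-pushouts induction on $|\capB\setminus\capA|$ for (b) is exactly the right mechanism. For (c) you have correctly isolated the one non-formal step --- that cobase change in $\AlgO$ (resp.\ $\LtO$) preserves $k_A$-connectivity of cofibrations --- and your proposed route through Propositions~\ref{prop:factorization_into_n_connected_cofibration} and~\ref{prop:pushout_of_n_connected_cofibration} (factor as a retract of a ``nice'' $k_A$-connected cofibration, then observe that cobase change of such a thing is again a transfinite composite of pushouts of $\capO\circ(i)$ with $i$ $k_A$-connected) is the intended one.

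One point worth making explicit: Propositions~\ref{prop:factorization_into_n_connected_cofibration} and~\ref{prop:pushout_of_n_connected_cofibration} carry the standing hypothesis that $\capR$ and the relevant $\capO_A$ are $(-1)$-connected, so your argument for (c) really proves the statement under that hypothesis, not in the bare generality in which the proposition is stated. The paper is silent on this, but since Proposition~\ref{prop:cofibration_cube_properties} is only ever invoked inside Proposition~\ref{prop:cubical_induction_argument}, where the blanket assumption that $\capR,\capO,\capX_\emptyset$ are $(-1)$-connected is in force and propagates along cofibrations to all the $\capO_{\capX_V}$ and $\capO_{\colim\capX}$ one meets, the gap is harmless in practice. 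You were right to anticipate this as the genuine obstacle; everything else really is bookkeeping.
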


The purpose of the following induction argument is to leverage the higher homotopy excision result (Theorem \ref{thm:higher_homotopy_excision}) for structured ring spectra into a proof of the first main theorem in this paper (Theorem \ref{thm:higher_blakers_massey})---the higher Blakers-Massey theorem for structured ring spectra. Proposition \ref{prop:cubical_induction_argument} is motivated by Goodwillie \cite[2.12]{Goodwillie_calc2}; it is essentially Goodwillie's cubical induction argument, appropriately modified to our situation.

\begin{prop}[Cubical induction argument]
\label{prop:cubical_induction_argument}
Let $\capO$ be an operad in $\capR$-modules and $W$ a nonempty finite set. Suppose $\capA\subset\capA_\mathrm{max}^W$ is convex, $A\in\capA$ maximal, $|A|\geq 2$, and $\capA':=\capA-\{A\}$. Let $\capX$ be a cofibration $W$-cube of $\capO$-algebras (resp. left $\capO$-modules). Assume that $\capR,\capO,\capX_\emptyset$ are $(-1)$-connected, and suppose that
\begin{itemize}
\item[(i)] for each nonempty subset $V\subset W$, the $V$-cube $\partial_\emptyset^V\capX$ (formed by all maps in $\capX$ between $\capX_\emptyset$ and $\capX_V$) is $k_V$-cocartesian,
\item[(ii)] $-1\leq k_{U}\leq k_V$ for each $U\subset V$.
\end{itemize}
If $X_{\capA'}$ is $k$-cartesian, then $\capX_\capA$ is $k$-cartesian, where $k$ is the minimum of $-|W|+\sum_{V\in\lambda}(k_V+1)$ over all partitions $\lambda$ of $W$ by nonempty sets.
\end{prop}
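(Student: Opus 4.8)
The plan is to adapt Goodwillie's argument at the corresponding point of \cite[2.12]{Goodwillie_calc2}, using the structured ring spectra cube calculus (Propositions~\ref{prop:connectivity_estimates_for_composition_of_maps},~\ref{prop:map_of_cubical_diagrams},~\ref{prop:composed_map_of_cubical_diagrams}), homotopy excision (Theorem~\ref{thm:homotopy_excision}), and the properties of the $\capX_\capA$ construction from Propositions~\ref{prop:colim_over_punctured_cube_of_X_sub_A_construction}--\ref{prop:cofibration_cube_properties}. We may work throughout with a cofibration $W$-cube $\capX$. The starting point is to compare the $W$-cubes $\capX_{\capA'}$ and $\capX_\capA$: these agree at every vertex $U$ with $A\not\subset U$ and differ exactly at the vertices $U\supset A$. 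Applying Proposition~\ref{prop:cofibration_cube_properties}(b) vertexwise with the convex subsets $\capA'$ and $\powerset(A)$ (so $\capA'\cup\powerset(A)=\capA$ and $\capA'\cap\powerset(A)=\powerset_1(A)$) produces a pushout square of $W$-cubes
\begin{align*}
\xymatrix{
  \capX_{\powerset_1(A)}\ar[r]\ar[d] & \capX_{\powerset(A)}\ar[d]\\
  \capX_{\capA'}\ar[r] & \capX_\capA
}
\end{align*}
in $\LtO$ in which all four maps are cofibrations, where $\capX_{\powerset(A)}$ is the $W$-cube $U\mapsto\capX_{A\cap U}$ (constant in the $W-A$ directions), and where along the vertices $U\supset A$ the top horizontal map is the map $\colim_{\powerset_1(A)}\capX\rarrow\capX_A$; this last map is $k_A$-connected by hypothesis~(i) and Proposition~\ref{prop:cofibration_cube_properties}(c) (using that $\capX$ is a cofibration cube).

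The heart of the argument is a connectivity analysis of this square of cubes. Reindexing the up-set of $A$ in $\powerset(W)$ by $\powerset(W-A)$, one sees that the cartesianness of the map of $W$-cubes $\capX_{\capA'}\rarrow\capX_\capA$ is governed by that of the map of $(W-A)$-cubes $\partial_A^W\capX_{\capA'}\rarrow\partial_A^W\capX_\capA$ (together with the fact that the two $W$-cubes agree outside the up-set of $A$), and the latter map is a cobase change of the evident map between the constant $(W-A)$-cubes on $\colim_{\powerset_1(A)}\capX$ and on $\capX_A$. Applying homotopy excision (Theorem~\ref{thm:homotopy_excision}) vertexwise to the homotopy pushout squares that occur there, with inputs the connectivity $k_A$ of the attaching map and the cocartesian estimates for the faces of $\capX_{\capA'}$ and $\capX_\capA$ supplied by Propositions~\ref{prop:properties_of_the_X_sub_A_construction} and~\ref{prop:cocartesian_estimates_for_induction_argument}, and then assembling with the cube calculus (Propositions~\ref{prop:connectivity_estimates_for_composition_of_maps},~\ref{prop:map_of_cubical_diagrams},~\ref{prop:composed_map_of_cubical_diagrams}), produces a lower bound for how cartesian the map $\capX_{\capA'}\rarrow\capX_\capA$ is. Since $\capX_{\capA'}$ is $k$-cartesian by hypothesis, the fibre sequence of total homotopy fibres attached to the $(W{+}1)$-cube $[\,\capX_{\capA'}\rarrow\capX_\capA\,]$ then gives that $\capX_\capA$ is $k$-cartesian.

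The bound comes out to be exactly the stated minimum over partitions for combinatorial reasons: passing from $\capA'$ to $\capA$ adds only the single set $A$, so the only new contribution to the cartesian defect of $\capX_\capA$ is the $A$-cell attachment spread over the $W-A$ directions, which homotopy excision evaluates as the partition terms $-|W|+\sum_{V\in\lambda}(k_V+1)$ for precisely those partitions $\lambda$ of $W$ having $A$ as one block, with $W-A$ partitioned by the remaining blocks; these are exactly the partition terms not already forced at the stage $\capA'$, so the overall minimum is unchanged (the base case $\capA^W_\mathrm{min}$, established via Theorem~\ref{thm:higher_homotopy_excision}, realizes the partitions into singletons). The step I expect to be the main obstacle is this middle one: carefully tracking the connectivities of the several maps feeding into the pushout square of $W$-cubes through the inductive cocartesian estimates for $\capX_{\capA'}$ and $\capX_\capA$, and checking that their assembly via homotopy excision and the cube calculus reproduces the partition-indexed minimum on the nose rather than a weaker estimate. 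One should also keep in mind that colimits in $\AlgO$ and $\LtO$, unlike limits, are not created by the forgetful functors, so these homotopy pushout and excision arguments must be carried out inside $\AlgO$, $\LtO$ and cannot simply be reduced to the stable categories $\ModR$, $\SymSeq$.
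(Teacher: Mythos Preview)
Your setup is right---the pushout square of $W$-cubes, the identification of the map $\capX_{\capA'}\rarrow\capX_\capA$ as the identity off the up-set of $A$, and the $k_A$-connectivity of the attaching map are exactly the ingredients the paper uses. The combinatorial reading of the partition formula is also correct: the new partition terms introduced at the step $\capA'\rightsquigarrow\capA$ are precisely those with $A$ as a block.

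The gap is in the ``middle step'' you flag. You propose to obtain the cartesian estimate on $\partial_A^W\capX_{\capA'}\rarrow\partial_A^W\capX_\capA$ by applying homotopy excision (Theorem~\ref{thm:homotopy_excision}) vertexwise and then assembling via the cube calculus. That will not work as stated: Theorem~\ref{thm:homotopy_excision} is a statement about $2$-cubes, and vertexwise application only gives you a collection of square estimates with no mechanism for producing a cartesian estimate on the $\bigl((W-A)\cup\{*\}\bigr)$-cube as a whole. The cube calculus propositions let you transport estimates along compositions and maps of cubes, but they do not manufacture a higher-cube cartesian estimate from a family of $2$-cube ones.

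What the paper does instead is argue by \emph{induction on $|W|$}. Writing $\capY$ for the $\bigl((W-A)\cup\{*\}\bigr)$-cube $\partial_A^W\capX_{\capA'}\rarrow\partial_A^W\capX_\capA$, one checks (using Propositions~\ref{prop:cocartesian_estimates_for_induction_argument} and~\ref{prop:cofibration_cube_properties} and the pushout square you wrote down) that $\capY$ itself satisfies hypotheses (i)--(ii) of the proposition with explicit values $k'_V$: namely $k'_V=k_V$ for $\emptyset\neq V\subset W-A$, $k'_{\{*\}}=k_A$, and $k'_V=\infty$ when $V=U\cup\{*\}$ with $\emptyset\neq U\subset W-A$. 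Since $|A|\geq 2$ forces $|(W-A)\cup\{*\}|<|W|$, the induction hypothesis applies to $\capY$ and yields directly that $\capY$ is $(k+|A|-1)$-cartesian. From there a short downward induction (over $A'\subset A$) using Proposition~\ref{prop:map_of_cubical_diagrams}(d) propagates this to the full map $\capX_{\capA'}\rarrow\capX_\capA$, and a final decomposition of $\capX_\capA$ as a composition of cubes (your last paragraph, essentially) finishes. The base case $|W|=2$ is Theorem~\ref{thm:blakers_massey}. So the missing idea is not a sharper excision estimate but rather recognizing that the proposition is self-referential: it feeds the smaller cube $\capY$ back into itself.
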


\begin{rem}
\label{rem:3_cube_example_of_cubical_induction}
In the case that $\capX$ is a $3$-cube, the following diagram illustrates one of the cubical decompositions covered by Proposition \ref{prop:cubical_induction_argument}. It corresponds to the sequence of maximal elements: $\{1,2\}$, $\{1,3\}$, $\{2,3\}$, $\{1,2,3\}$; we benefitted from the discussion in Munson-Voli\'c \cite{Munson_Volic_book_project} where this cubical decomposition is used, in the context of spaces, to illustrate Goodwillie's cubical induction argument.

\begin{align*}
\xymatrix@!0{
\capX_\emptyset\ar[rr]\ar[dd]\ar[dr] &&
\capX_{\{1\}}\ar[dr]\ar'[d][dd]\ar@{=}[rr] &&
\capX_{\{1\}}\ar@{=}[rr]\ar[dr]\ar'[d][dd] &&
\capX_{\{1\}}\ar[dr]\ar[dd]|(0.5)\hole|(0.53)\hole\\
&\capX_{\{2\}}\ar[rr]\ar[dd] &&
\cdot\ar[dd]\ar[rr] &&
\capX_{\{1,2\}}\ar@{=}[rr]\ar[dd] &&
\capX_{\{1,2\}}\ar[dd]\\
\capX_{\{3\}}\ar[dr]\ar'[r][rr]\ar@{=}[dd] &&
\cdot\ar[dr]\ar@{=}'[r][rr] &&
\cdot\ar[dr]\ar'[r][rr] &&
\capX_{\{1,3\}}\ar[dr]\ar@{=}'[d][dd]\\
&\cdot\ar[rr]\ar[dd] &&
\cdot\ar[rr] &&
\cdot\ar[rr] &&
\cdot\ar[dd]\\
\capX_{\{3\}}\ar[dr]\ar'[r][rrrrrr]\ar@{=}[dd]&&&&&&
\capX_{\{1,3\}}\ar[dr]\ar@{=}'[d][dd]\\
&\capX_{\{2,3\}}\ar[rrrrrr]\ar@{=}[dd]&&&&&&
\cdot\ar[dd]\\
\capX_{\{3\}}\ar[dr]\ar'[r][rrrrrr] &&&&&&
\capX_{\{1,3\}}\ar[dr]\\
&\capX_{\{2,3\}}\ar[rrrrrr]&&&&&&\capX_{\{1,2,3\}}
}
\end{align*}
\end{rem}

\begin{proof}[Proof of Proposition \ref{prop:cubical_induction_argument}]
The argument is by induction on $|W|$. The case $|W|=2$ is verified by the proof of Theorem \ref{thm:blakers_massey}. Let $n\geq 3$ and assume the proposition is true for each $2\leq |W|<n$. Let's verify it remains true for $|W|=n$.

We know by assumption that $\capX_{\capA'}$ is $k$-cartesian. We want to verify that $\capX_\capA$ is $k$-cartesian (it might be helpful at this point to look ahead to  \eqref{eq:final_decomposition_for_induction_argument} for the decomposition of $\capX_\capA$ that we will use to finish the proof). Consider the induced map of $W$-cubes $\capX_{\capA'}\rarrow\capX_\capA$. Note that if $V\not\supset A$, then $\powerset(V)\not\ni A$ and hence $\capA'\cap\powerset(V)=\capA\cap\powerset(V)$; in particular, each of the maps
\begin{align}
\label{eq:subdiagram_of_identity_maps}
\xymatrix{
  (\capX_{\capA'})_V\ar[r]^-{\id} & (\capX_\capA)_V,
  & V\not\supset A
}
\end{align}
in $\capX_{\capA'}\rarrow\capX_\capA$ is the identity map. Note that if $V\supset U\supset A$, then the diagram
\begin{align}
\label{eq:pushout_diagram_in_induction_step}
\xymatrix{
  (\capX_{\capA'})_U\ar[r]\ar[d] & (\capX_\capA)_U\ar[d]\\
  (\capX_{\capA'})_V\ar[r] & (\capX_\capA)_V,&
  V\supset U\supset A
}
\end{align}
is a pushout diagram by Proposition \ref{prop:cofibration_cube_properties}(b); in particular, focusing on this case is the same as focusing on the subdiagram $\partial_A^W\capX_{\capA'}\rarrow\partial_A^W\capX_\capA$ of $\capX_{\capA'}\rarrow\capX_\capA$.

Let's first verify that $\partial_A^W\capX_{\capA'}\rarrow\partial_A^W\capX_\capA$ is $(k+|A|-1)$-cartesian. Let $\capY$ denote $\partial_A^W\capX_{\capA'}\rarrow\partial_A^W\capX_\capA$ regarded as a $((W-A)\cup\{*\})$-cube as follows:
\begin{align*}
  \capY_V&:=(\partial_A^W\capX_{\capA'})_V=(\capX_{\capA'})_{V\cup A},\quad\ V\subset W-A,\\
  \capY_V&:=(\partial_A^W\capX_\capA)_U=(\capX_\capA)_{U\cup A},\quad\quad U\subset W-A,\quad V=U\cup\{*\}.
\end{align*}
Since $|(W-A)\cup\{*\}|<|W|$, our induction assumption can be applied to $\capY$, provided that the appropriate $k'_V$-cocartesian estimates are satisfied. We claim that with the following definitions
\begin{align*}
  k'_V&:=k_V,\quad\quad \emptyset\neq V\subset W-A,\\
  k'_V&:=k_A,\quad\quad V=\{*\}\\
  k'_V&:=\infty,\quad\quad \emptyset\neq U\subset W-A,\quad V=U\cup\{*\},
\end{align*}
the cube $\partial_\emptyset^V\capY$ (formed by all maps in $\capY$ between $\capY_\emptyset$ and $\capY_V$) is $k'_V$-cocartesian, for each nonempty subset $V\subset (W-A)\cup\{*\}$. Let's verify this now: If $V\subset W-A$ is nonempty, then $\partial_\emptyset^V\capY$ is the cube $\partial_A^{A\cup V}\capX_{\capA'}$ which is $k_V$-cocartesian by Proposition \ref{prop:cocartesian_estimates_for_induction_argument}. If $V=\{*\}$, then $\partial_\emptyset^V\capY$ is the map $(\capX_{\capA'})_A\rarrow(\capX_\capA)_A$ which is $k_A$-connected by Proposition \ref{prop:cofibration_cube_properties}(c). Finally, if $V=U\cup\{*\}$ for any nonempty $U\subset W-A$, then $\partial_\emptyset^V\capY$ is the cube $\partial_A^{A\cup U}\capX_{\capA'}\rarrow\partial_A^{A\cup U}\capX_\capA$ which is $\infty$-cocartesian by
\eqref{eq:pushout_diagram_in_induction_step} and Proposition \ref{prop:map_of_cubical_diagrams}(b). Hence by our induction hypothesis applied to $\capY$: since the sum $\sum_{V\in\lambda'}(k'_V+1)$ for a partition $\lambda'$ of $(W-A)\cup\{*\}$ by nonempty sets is always either $\infty$, or the sum $\sum_{U\in\lambda}(k_U+1)$ for a partition $\lambda$ of $W$ by nonempty sets in which some $U$ is $A$, we know that $\capY$ is $k'$-cartesian, where $k'$ is the minimum of $-|(W-A)\cup\{*\}|+\sum_{U\in\lambda}(k_U+1)$ over all partitions $\lambda$ of $W$ by nonempty sets in which some $U$ is $A$. In particular, this implies that $\capY$, and hence $\partial_A^W\capX_{\capA'}\rarrow\partial_A^W\capX_\capA$, is $(k+|A|-1)$-cartesian.

We next want to verify that
\begin{align}
\label{eq:induction_argument_cartesian_estimate_for_certain_subdiagrams}
  \partial_{A'}^W\capX_{\capA'}\rarrow\partial_A^W\capX_\capA
  \quad \text{is $(k+|A'|-1)$-cartesian}
\end{align}
for each $A'\subset A$. We know that \eqref{eq:induction_argument_cartesian_estimate_for_certain_subdiagrams} is true for $A'=A$ by above. We will argue by downward induction on $|A'|$. Suppose that \eqref{eq:induction_argument_cartesian_estimate_for_certain_subdiagrams} is true for some nonempty $A'\subset A$. Let $a\in A'$ and note that the cube $\partial_{A'-\{a\}}^W\capX_{\capA'}\rarrow\partial_{A'-\{a\}}^W\capX_\capA$ can be written as the diagram of cubes
\begin{align}
\label{eq:downward_induction_cube_factorization}
\xymatrix{
  \partial_{A'-\{a\}}^{W-\{a\}}\capX_{\capA'}\ar[r]^{\id}\ar[d] &
  \partial_{A'-\{a\}}^{W-\{a\}}\capX_\capA\ar[d]\\
  \partial_{A'}^W\capX_{\capA'}\ar[r] &
  \partial_{A'}^W\capX_{\capA}
}
\end{align}
We know the top arrow is the identity by \eqref{eq:subdiagram_of_identity_maps}, and the bottom arrow is $(k+|A'|-1)$-cartesian by assumption. It follows from  Proposition \ref{prop:map_of_cubical_diagrams}(d) that  \eqref{eq:downward_induction_cube_factorization} is $(k+|A'|-2)$-cartesian, which finishes the argument that \eqref{eq:induction_argument_cartesian_estimate_for_certain_subdiagrams} is true for each $A'\subset A$.

To finish off the proof, let $a\in A$ and note by \eqref{eq:subdiagram_of_identity_maps} that $\capX_\capA$ can be written as the composition of cubes
\begin{align}
\label{eq:final_decomposition_for_induction_argument}
  \partial_\emptyset^{W-\{a\}}\capX_{\capA'}\rarrow
  \partial_{\{a\}}^W\capX_{\capA'}\rarrow
  \partial_{\{a\}}^W\capX_\capA.
\end{align}
The right-hand arrow is $k$-cartesian by \eqref{eq:induction_argument_cartesian_estimate_for_certain_subdiagrams} and the left-hand arrow is $\capX_{\capA'}$ which is $k$-cartesian by assumption, hence by Proposition \ref{prop:composed_map_of_cubical_diagrams}(c) it follows that $\capX_\capA$ is $k$-cartesian which finishes the proof.
\end{proof}

\begin{thm}[Theorem \ref{thm:higher_blakers_massey} restated]
Let $\capO$ be an operad in $\capR$-modules and $W$ a nonempty finite set. Let $\capX$ be a $W$-cube of $\capO$-algebras (resp. left $\capO$-modules). Assume that $\capR,\capO,\capX_\emptyset$ are $(-1)$-connected, and suppose that
\begin{itemize}
\item[(i)] for each nonempty subset $V\subset W$, the $V$-cube $\partial_\emptyset^V\capX$ (formed by all maps in $\capX$ between $\capX_\emptyset$ and $\capX_V$) is $k_V$-cocartesian,
\item[(ii)] $-1\leq k_{U}\leq k_V$ for each $U\subset V$.
\end{itemize}
Then $\capX$ is $k$-cartesian, where $k$ is the minimum of $-|W|+\sum_{V\in\lambda}(k_V+1)$ over all partitions $\lambda$ of $W$ by nonempty sets.
\end{thm}

\begin{proof}
It suffices to consider the case of left $\capO$-modules. It is enough to consider the special case where $\capX$ is a cofibration $W$-cube in $\LtO$. Let $\capA:=\capA_{\mathrm{max}}^W$, $\capA':=\capA_{\mathrm{max}}^W-\{W\}$, and note that $\capX_\capA$ is equal to $\capX$. Then it follows by induction from Proposition \ref{prop:cubical_induction_argument}, together with Theorem \ref{thm:higher_homotopy_excision} (to start the induction using $\capA'=\capA^W_{\mathrm{min}}$) that $\capX$ is $k$-cartesian.
\end{proof}

\subsection{Proof of higher dual homotopy excision for $\AlgO$ and $\LtO$}

We now turn to the dual versions of our main results. In this section we prove the dual homotopy excision results (Theorems \ref{thm:dual_homotopy_excision} and \ref{thm:higher_dual_homotopy_excision}). Notice that here we are leveraging the fact that cartesian-ness in the categories $\AlgO$ and $\LtO$ is detected in the underlying categories of $\capR$-modules and symmetric sequences, and that, in those underlying categories, there is a close relationship between cartesian-ness and cocartesian-ness, given by Proposition~\ref{prop:comparing_cocartesian_and_cartesian_estimates_in_ModR}.

\begin{thm}[Theorem \ref{thm:dual_homotopy_excision} restated]
Let $\capO$ be an operad in $\capR$-modules. Let $\capX$ be a homotopy pullback square of $\capO$-algebras (resp. left $\capO$-modules) of the form
\begin{align*}
\xymatrix{
  \capX_\emptyset\ar[r]\ar[d] & \capX_{\{1\}}\ar[d]\\
  \capX_{\{2\}}\ar[r] & \capX_{\{1,2\}}
}
\end{align*}
Assume that $\capR,\capO,\capX_\emptyset$ are $(-1)$-connected. Consider any $k_1,k_2\geq -1$. If $\capX_{\{2\}}\rarrow \capX_{\{1,2\}}$ is $k_1$-connected and $\capX_{\{1\}}\rarrow \capX_{\{1,2\}}$ is $k_2$-connected, then $\capX$ is $k$-cocartesian with $k=k_{1}+k_{2}+2$.
\end{thm}

\begin{proof}
It suffices to consider the case of left $\capO$-modules. Let $W:=\{1,2\}$. It is enough to consider the special case where $\capX$ is a cofibration $W$-cube in $\LtO$. Consider the induced maps
\begin{align*}
\xymatrix{
  \colim\nolimits^{\SymSeq}_{\powerset_1(W)}\capX\ar[r]^-{(*)} &
  \colim\nolimits^{\LtO}_{\powerset_1(W)}\capX\ar[r]^-{(**)} &
  \colim\nolimits^{\LtO}_{\powerset(W)}\Iso\capX_W
}
\end{align*}
We know that $(*)$ is $(k_1+k_2+1)$-connected by homotopy excision (Theorem \ref{thm:homotopy_excision}), and since $\capX$ is $\infty$-cocartesian in the underlying category $\SymSeq$, the composition is $\infty$-connected. Hence by Proposition \ref{prop:connectivity_estimates_for_composition_of_maps}(b) the map $(**)$ is $(k_1+k_2+2)$-connected which finishes the proof.
\end{proof}

\begin{thm}[Theorem \ref{thm:higher_dual_homotopy_excision} restated]
Let $\capO$ be an operad in $\capR$-modules and $W$ a finite set with $|W|\geq 2$. Let $\capX$ be a strongly $\infty$-cartesian $W$-cube of $\capO$-algebras (resp. left $\capO$-modules). Assume that $\capR,\capO,\capX_\emptyset$ are $(-1)$-connected. Let $k_i\geq -1$ for each $i\in W$. If each $\capX_{W-\{i\}}\rarrow\capX_W$ is $k_i$-connected ($i\in W$),  then $\capX$ is $k$-cocartesian with $k=|W|+\sum_{i\in W}k_i$.
\end{thm}

\begin{proof}
It suffices to consider the case of left $\capO$-modules. The argument is by induction on $|W|$. The case $|W|=2$ is verified by Theorem \ref{thm:dual_homotopy_excision}. Let $n\geq 3$ and assume the theorem is true for each $2\leq |W|< n$. Let's verify it remains true for $|W|=n$.

It suffices to consider the special case where $\capX$ is a cofibration $W$-cube in $\LtO$. Consider the induced maps
\begin{align}
\label{eq:composition_of_maps_for_determining_cocartesian_estimate}
\xymatrix{
  \colim\nolimits^{\SymSeq}_{\powerset_1(W)}\capX\ar[r]^-{(*)} &
  \colim\nolimits^{\LtO}_{\powerset_1(W)}\capX\ar[r]^-{(**)} &
  \colim\nolimits^{\LtO}_{\powerset(W)}\Iso\capX_W
}
\end{align}
We want to show that $(**)$ is $k$-connected. Consider the $W$-cube $\capX':=\capX_{\capA_\mathrm{max}-\{W\}}$ in $\LtO$ and note that $(*)$ is isomorphic to the map
\begin{align*}
\xymatrix{
  \colim\nolimits^{\SymSeq}_{\powerset_1(W)}\capX'\ar[r]^-{(*)'} &
  \colim\nolimits^{\SymSeq}_{\powerset(W)}\capX'\Iso\capX'_W
}
\end{align*}
We know that $\capX'$ is $\infty$-cocartesian in $\LtO$, hence by higher Blakers-Massey (Theorem \ref{thm:higher_blakers_massey}) applied to $\capX'$, together with the induction hypothesis, it follows that $\capX'$ is $k'$-cartesian with $k'=\sum_{i\in W}k_i$. Hence by Proposition \ref{prop:comparing_cocartesian_and_cartesian_estimates_in_ModR} we know that $\capX'$ is $(k'+|W|-1)$-cocartesian in $\SymSeq$, and therefore $(*)$ is $(k'+|W|-1)$-connected. Since the composition in \eqref{eq:composition_of_maps_for_determining_cocartesian_estimate} is $\infty$-connected, it follows from Proposition \ref{prop:connectivity_estimates_for_composition_of_maps}(b) that $(**)$ is $(k'+|W|)$-connected which finishes the proof.
\end{proof}

\subsection{Proof of the higher dual Blakers-Massey theorem for $\AlgO$ and $\LtO$}

The purpose of this section is to prove Theorems \ref{thm:dual_blakers_massey} and \ref{thm:higher_dual_blakers_massey}.

\begin{thm}[Theorem \ref{thm:dual_blakers_massey} restated]
Let $\capO$ be an operad in $\capR$-modules. Let $\capX$ be a commutative square of $\capO$-algebras (resp. left $\capO$-modules) of the form
\begin{align*}
\xymatrix{
  \capX_\emptyset\ar[r]\ar[d] & \capX_{\{1\}}\ar[d]\\
  \capX_{\{2\}}\ar[r] & \capX_{\{1,2\}}
}
\end{align*}
Assume that $\capR,\capO,\capX_\emptyset$ are $(-1)$-connected. Consider any $k_1,k_2,k_{12}\geq -1$ with $k_1\leq k_{12}$ and $k_2\leq k_{12}$. If $\capX_{\{2\}}\rarrow \capX_{\{1,2\}}$ is $k_1$-connected, $\capX_{\{1\}}\rarrow \capX_{\{1,2\}}$ is $k_2$-connected, and $\capX$ is $k_{12}$-cartesian, then $\capX$ is $k$-cocartesian, where $k$ is the minimum of $k_{12}+1$ and $k_{1}+k_{2}+2$.
\end{thm}

\begin{proof}
It suffices to consider the case of left $\capO$-modules. Let $W:=\{1,2\}$. It is enough to consider the special case where $\capX$ is a cofibration $W$-cube in $\LtO$. Consider the induced maps
\begin{align}
\label{eq:composition_of_maps_in_dual_blakers_massey}
\xymatrix{
  \colim\nolimits^{\SymSeq}_{\powerset_1(W)}\capX\ar[r]^-{(*)} &
  \colim\nolimits^{\LtO}_{\powerset_1(W)}\capX\ar[r]^-{(**)} &
  \colim\nolimits^{\LtO}_{\powerset(W)}\Iso\capX_W
}
\end{align}
Since $\capX$ is $k_{12}$-cartesian, we know by Proposition \ref{prop:comparing_cocartesian_and_cartesian_estimates_in_ModR} that $\capX$ is $(k_{12}+1)$-cocartesian in $\SymSeq$, and hence the composition in \eqref{eq:composition_of_maps_in_dual_blakers_massey} is $(k_{12}+1)$-connected. Since we know the map $(*)$ is $(k_1+k_2+1)$-connected by homotopy excision (Theorem \ref{thm:homotopy_excision}), it follows that $(**)$ is $k$-connected by Proposition \ref{prop:connectivity_estimates_for_composition_of_maps}(b) which finishes the proof.
\end{proof}

\begin{defn}
Let $\capO$ be an operad in $\capR$-modules and $W$ a nonempty finite set. Let $\capX$ be a $W$-cube of $\capO$-algebras (resp. left $\capO$-modules) and consider any subset $\capB\subset\powerset(W)$.
\begin{itemize}
\item A subset $\capA\subset\capB$ is \emph{concave} if every element of $\capB$ which is greater than an element of $\capA$ is in $\capA$.
\item Define $\capA_W^\mathrm{min}:=\{V\subset W: |W-V|\leq 1\}$ and $\capA_W^\mathrm{max}:=\powerset(W)$.
\item For each concave subset $\capA\subset\powerset(W)$, the $W$-cube $\capX^\capA$ is defined objectwise by $(\capX^\capA)_U := \lim_{T\in\capA,\, T\supset U}\capX_T$.
\end{itemize}
\end{defn}

\begin{prop}
\label{prop:warmup_cartesian_estimates_for_induction_argument}
Let $\capO$ be an operad in $\capR$-modules and $W$ a nonempty finite set. Let $\capX$ be a fibration $W$-cube of $\capO$-algebras (resp. left $\capO$-modules). Assume that
\begin{itemize}
\item[(i)] for each nonempty subset $V\subset W$, the $V$-cube $\partial_{W-V}^W\capX$  (formed by all maps in $\capX$ between $\capX_{W-V}$ and $\capX_W$) is $k_V$-cartesian,
\item[(ii)] $k_{U}\leq k_V$ for each $U\subset V$.
\end{itemize}
Then, for every $U\subsetneqq V\subset W$, the $(V-U)$-cube $\partial_U^V\capX$ is $k_{V-U}$-cartesian.
\end{prop}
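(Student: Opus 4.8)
The plan is to dualize, almost verbatim, the argument for Proposition~\ref{prop:warmup_cocartesian_estimates_for_induction_argument}: run an induction on $|W-V|$ (in place of the induction on $|U|$ used there), and replace every occurrence of colimits and cocartesian-ness by limits and cartesian-ness, so that the one homotopy-theoretic input becomes Proposition~\ref{prop:map_of_cubical_diagrams}(c) rather than~(a).

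\textbf{Base case $|W-V|=0$.} Here $V=W$, and for $U\subsetneqq W$ the $(W-U)$-cube $\partial_U^W\capX$ equals $\partial_{W-V'}^W\capX$ with $V':=W-U$ nonempty, which is $k_{V'}=k_{W-U}=k_{V-U}$-cartesian by hypothesis~(i).

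\textbf{Inductive step.} Assume the statement for all instances with $|W-V|<m$, and let $U\subsetneqq V\subset W$ with $|W-V|=m\geq 1$. Choose $w\in W-V$. The key observation (pure bookkeeping) is that the $((V-U)\cup\{w\})$-cube $\partial_U^{V\cup\{w\}}\capX$, viewed as a map of $(V-U)$-cubes in the $w$-direction, is exactly the map $\partial_U^V\capX\rarrow\partial_{U\cup\{w\}}^{V\cup\{w\}}\capX$ (the faces not containing $w$ give the source, those containing $w$ the target, using $(V\cup\{w\})-(U\cup\{w\})=V-U$ and $(V\cup\{w\})-U=(V-U)\cup\{w\}$, together with $U\cup\{w\}\subsetneqq V\cup\{w\}$ since $w\notin V\supset U$ and $U\neq V$). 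Since $|W-(V\cup\{w\})|=m-1<m$, the inductive hypothesis applies to both terms: $\partial_U^{V\cup\{w\}}\capX$ is $k_{(V-U)\cup\{w\}}$-cartesian, and $\partial_{U\cup\{w\}}^{V\cup\{w\}}\capX$ is $k_{V-U}$-cartesian. By hypothesis~(ii), $k_{V-U}\leq k_{(V-U)\cup\{w\}}$, so the map $\partial_U^V\capX\rarrow\partial_{U\cup\{w\}}^{V\cup\{w\}}\capX$ is in particular $k_{V-U}$-cartesian, and so is its target; Proposition~\ref{prop:map_of_cubical_diagrams}(c) then gives that $\partial_U^V\capX$ is $k_{V-U}$-cartesian, completing the induction.

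The only point requiring any care is the set-theoretic bookkeeping in the inductive step --- checking the displayed set identities and that the map-of-cubes identification is the correct one --- and this is entirely routine. I do not expect a serious obstacle: no homotopical input beyond Proposition~\ref{prop:map_of_cubical_diagrams} is needed, and the fibration-cube hypothesis serves, exactly as the cofibration-cube hypothesis does in the cocartesian case, only to guarantee that the homotopy limits under discussion are correctly computed.
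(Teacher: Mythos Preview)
Your proof is correct and is essentially the same argument as the paper's, carried out with cleaner bookkeeping. Both proofs do downward induction on $|V|$ and use the decomposition of a cube as a map of faces together with Proposition~\ref{prop:map_of_cubical_diagrams}(c); the only difference is cosmetic. You phrase the inductive step as \emph{adding} an element $w\in W-V$ (so that the induction hypothesis applies directly at $V\cup\{w\}$), whereas the paper phrases it as \emph{removing} an element $u\in U$ (so that the claim at $V$ yields the claim at $V-\{u\}$). Under the relabeling $(U,V,w)\leftrightarrow(U-\{u\},V-\{u\},u)$ the two decompositions coincide. Your formulation has the advantage of stating the induction hypothesis more transparently, and it avoids the paper's evident typo (``Let $v\in V$'' followed by formulas in $u$).
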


\begin{proof}
It will be useful to note that assumptions (i) and (ii) are equivalent to the following assumptions:
\begin{itemize}
\item[(1)] for each subset $U\subsetneqq W$, the $(W-U)$-cube $\partial_{U}^W\capX$  (formed by all maps in $\capX$ between $\capX_U$ and $\capX_W$) is $k_{W-U}$-cartesian,
\item[(2)] $k_{W-V}\leq k_{W-U}$ for each $U\subset V$.
\end{itemize}
We want to verify that $\partial_U^V\capX$ is $k_{V-U}$-cartesian for each $U\subsetneqq V\subset W$. The argument is by downward induction on $|V|$. The case $|V|=|W|$ is true by assumption. Assume the proposition is true for some nonempty $V\subset W$ and consider any $U\subsetneqq V$. Let $v\in V$ and note that $\partial_{U-\{u\}}^V\capX$ can be written as the composition of cubes
\begin{align*}
\xymatrix{
  \partial_{U-\{u\}}^{V-\{u\}}\capX\ar[r] &
  \partial_U^V\capX
}
\end{align*}
We know by the induction assumption that the composition of cubes is $k_{(V-U)\cup\{u\}}$-cartesian and the right-hand cube is $k_{V-U}$-cartesian. Since $k_{V-U}\leq k_{(V-U)\cup\{u\}}$ by assumption, it follows from Proposition \ref{prop:map_of_cubical_diagrams}(c) that the left-hand cube is $k_{V-U}$-cartesian which finishes the proof; note that the sets $(V-\{u\})-(U-\{u\})$ and $V-U$ are the same.
\end{proof}

The following proposition will be needed in the proof of Proposition \ref{prop:properties_of_the_X_sup_A_construction} below.

\begin{prop}
\label{prop:lim_over_punctured_cube_of_X_sup_A_construction}
Let $\capO$ be an operad in $\capR$-modules and $W$ a finite set. Let $\capX$ be a $W$-cube of $\capO$-algebras (resp. left $\capO$-modules) and consider any concave subset $\capA\subset\powerset(W)$. Then for each nonempty subset $V\subset W$, there is a natural isomorphism
\begin{align*}
  \lim_{\powerset_0(W-V)}\partial_V^W\capX^\capA\Iso
  \lim_{T\in\capA,\,T\supsetneqq V}\capX_T
\end{align*}
\end{prop}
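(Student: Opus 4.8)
The plan is to dualize the one-line proof of Proposition~\ref{prop:colim_over_punctured_cube_of_X_sub_A_construction}, replacing colimits by limits and reversing the order relation on $\powerset(W)$. First I would unpack the left-hand side. Since $\partial_V^W\capX^\capA$ is the $(W-V)$-cube $U\mapsto(\capX^\capA)_{U\cup V}$ and, by definition, $(\capX^\capA)_{U\cup V}=\lim_{T\in\capA,\,T\supset U\cup V}\capX_T$, the left-hand side is the iterated limit
\begin{align*}
  \lim_{\powerset_0(W-V)}\partial_V^W\capX^\capA\Iso
  \lim_{\substack{U\subset W-V\\ U\neq\emptyset}}\ \lim_{\substack{T\in\capA\\ T\supset U\cup V}}\capX_T,
\end{align*}
where the structure map of the outer diagram for $U\subset U'$ is the canonical map induced by restricting cones along the inclusion of indexing posets $\{T\in\capA:T\supset U'\cup V\}\subset\{T\in\capA:T\supset U\cup V\}$ (note that the inner index set \emph{shrinks} as $U$ grows).

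Second, I would evaluate this iterated limit by its universal property. A cone to it from an object $Z$ amounts, for each nonempty $U\subset W-V$, to a family $(f_T^U\colon Z\rarrow\capX_T)_{T\in\capA,\,T\supset U\cup V}$ compatible with the structure maps of $\capX$, with the families for $U\subset U'$ agreeing on the smaller index set; gluing these gives a single compatible family $(f_T\colon Z\rarrow\capX_T)$ indexed by the union $\bigcup_{\emptyset\neq U\subset W-V}\{T\in\capA:T\supset U\cup V\}$. The key combinatorial point — the exact dual of the observation used for Proposition~\ref{prop:colim_over_punctured_cube_of_X_sub_A_construction} — is that this indexing set equals $\{T\in\capA:T\supsetneqq V\}$: if $V\subsetneqq T$ with $T\in\capA$ then $U:=T-V$ is a nonempty subset of $W-V$ with $U\cup V\subset T$, while conversely $U\cup V\subset T$ with $\emptyset\neq U\subset W-V$ forces $V\subsetneqq T$ since $U\cap V=\emptyset$. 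One must also check that the per-$U$ compatibility conditions assemble to the single compatibility over $\{T\in\capA:T\supsetneqq V\}$, which is immediate since any pair $T\subset T'$ there already lies in a common $\{T\in\capA:T\supset U\cup V\}$ (take $U$ a nonempty subset of $T-V$). By the Yoneda lemma this identifies the iterated limit with $\lim_{T\in\capA,\,T\supsetneqq V}\capX_T$, as claimed.

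Alternatively, and perhaps more cleanly for a write-up, I would package the iterated limit as the limit of $\capX$ over the category $\mathcal{I}$ of pairs $(U,T)$ with $\emptyset\neq U\subset W-V$, $T\in\capA$, $U\cup V\subset T$, with a morphism $(U,T)\rarrow(U',T')$ whenever $U\subset U'$ and $T\subset T'$, and then check that the projection $\function{\pi}{\mathcal{I}}{\{T\in\capA:\, T\supsetneqq V\}}$, $(U,T)\mapsto T$, is initial (cofinal for limits): it is well defined by the observation above, and since $\capA$ is concave — hence closed under unions — any two objects $(U_1,T_1)$, $(U_2,T_2)$ of $\mathcal{I}$ lying over a fixed $T_0$ both admit maps to $(U_1\cup U_2,\,T_1\cup T_2)$, so each comma category $T_0\downarrow\pi$ is nonempty and connected, and cofinality gives the isomorphism. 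I expect the only genuine subtlety in either route to be bookkeeping the variances in the iterated limit and confirming that the local compatibility conditions reassemble correctly; the rest is formal.
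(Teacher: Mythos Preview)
Your proof is correct and takes essentially the same approach as the paper: the paper's proof is the one-liner ``This is because the indexing sets $\{T\in\capA:T\supset U,\, U\supsetneqq V\}$ and $\{T\in\capA:T\supsetneqq V\}$ are the same,'' which is precisely your key combinatorial observation. You have simply been more careful than the paper in justifying why the iterated limit collapses to the limit over that single indexing set.
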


\begin{proof}
This is because the indexing sets $\{T\in\capA:T\supset U,\, U\supsetneqq V\}$ and $\{T\in\capA:T\supsetneqq V\}$ are the same.
\end{proof}

The following proposition explains the key properties of the $\capX^\capA$ construction and its relationship to $\capX$; it is through these properties that the $\capX^\capA$ construction is useful and meaningful.

\begin{prop}
\label{prop:properties_of_the_X_sup_A_construction}
Let $\capO$ be an operad in $\capR$-modules and $W$ a nonempty finite set. Let $\capX$ be a $W$-cube of $\capO$-algebras (resp. left $\capO$-modules) and consider any concave subset $\capA_W^\mathrm{min}\subset\capA\subset\capA_W^\mathrm{max}$.
\begin{itemize}
\item[(a)] There are natural isomorphisms $\partial_V^W\capX^\capA\Iso\partial_V^W\capX$ if $V\in\capA$,
\item[(b)] The $(W-V)$-cube $\partial_V^W\capX^\capA$ is $\infty$-cartesian if $V\notin\capA$ and $\capX$ is a fibration $W$-cube of $\capO$-algebras (resp. left $\capO$-modules).
\end{itemize}
\end{prop}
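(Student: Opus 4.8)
The approach is to transcribe the proof of Proposition~\ref{prop:properties_of_the_X_sub_A_construction}, dualizing everywhere: colimits become limits, ``convex'' becomes ``concave'', down-sets become up-sets, and ``$\infty$-cocartesian'' becomes ``$\infty$-cartesian''. Two elementary observations carry the argument. First, because $\capA$ is concave, if $V\in\capA$ then $\capA$ contains every $T\subset W$ with $T\supset V$, i.e., the entire up-set of $V$ in $\powerset(W)$. Second, for any $U\subset W$ the poset $\{T:U\subset T\subset W\}$ has $U$ as its initial object, so the canonical projection $\lim_{T\supset U}\capX_T\rarrow\capX_U$ is an isomorphism.

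For part~(a): if $V\in\capA$ and $V\subset U\subset W$, the first observation gives $\{T\in\capA:T\supset U\}=\{T:U\subset T\subset W\}$, and then the second observation gives $(\capX^\capA)_U=\lim_{T\in\capA,\,T\supset U}\capX_T\Iso\capX_U$. These projections are compatible with the structure maps of the cube, so they assemble into the asserted natural isomorphism $\partial_V^W\capX^\capA\Iso\partial_V^W\capX$ of $(W-V)$-cubes.

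For part~(b): suppose $V\notin\capA$. Since $V$ itself is excluded from $\capA$, we have $\{T\in\capA:T\supset V\}=\{T\in\capA:T\supsetneqq V\}$, so Proposition~\ref{prop:lim_over_punctured_cube_of_X_sup_A_construction} identifies the canonical map $(\partial_V^W\capX^\capA)_\emptyset=(\capX^\capA)_V\rarrow\lim_{\powerset_0(W-V)}\partial_V^W\capX^\capA$ with the identity of $\lim_{T\in\capA,\,T\supsetneqq V}\capX_T$; in particular it is an isomorphism. As in the proof of Proposition~\ref{prop:properties_of_the_X_sub_A_construction}(b), this is essentially the conclusion, but to be rigorous about the homotopy-limit that appears in the definition of $\infty$-cartesian I would then note that $\partial_V^W\capX^\capA$ is a fibration cube, so this strict limit models the homotopy limit: for each $U\subset W$ the comparison $(\capX^\capA)_U\rarrow\lim_{\powerset_0(W-U)}\partial_U^W\capX^\capA$ is, again by Proposition~\ref{prop:lim_over_punctured_cube_of_X_sup_A_construction} (and the evident variant when $U=\emptyset$), the restriction map associated to the inclusion of concave subsets $\{T\in\capA:T\supsetneqq U\}\subset\{T\in\capA:T\supset U\}$ of $\powerset(W)$, hence a fibration by the dual of Proposition~\ref{prop:cofibration_cube_properties}(a) --- using also that limits in $\AlgO$ and $\LtO$ are created in the underlying categories (Proposition~\ref{prop:basic_properties_LTO}). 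Thus $\capX^\capA$, and therefore its face $\partial_V^W\capX^\capA$, is a fibration cube, and so $(\partial_V^W\capX^\capA)_\emptyset\rarrow\holim_{\powerset_0(W-V)}\partial_V^W\capX^\capA$ is a weak equivalence, which is exactly what part~(b) asserts.

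I do not expect a genuine obstacle, since the statement is purely formal dualization; the one step needing care is the last part of~(b). One must have the dual of the cofibration-cube package available --- in particular the dual of Proposition~\ref{prop:cofibration_cube_properties} and the observation that $\partial_V^W$ of a fibration cube is again a fibration cube --- because only with such fibrancy hypotheses does ``the strict limit comparison is an isomorphism'' imply ``$\infty$-cartesian'' (a strict pullback need not be a homotopy pullback). This is the exact dual of the (tacit) use of the cofibration-cube properties in the proof of Proposition~\ref{prop:properties_of_the_X_sub_A_construction}(b).
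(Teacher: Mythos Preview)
Your proof is correct and follows essentially the same route as the paper's own argument. The only difference is that you are more explicit in part~(b) about why the strict-limit isomorphism entails $\infty$-cartesian: you verify that $\capX^\capA$ is a fibration cube so that the strict limit computes the homotopy limit, whereas the paper leaves this implicit (the hypothesis that $\capX$ is a fibration cube, together with the dual package, is understood). Note also that what you call ``the dual of Proposition~\ref{prop:cofibration_cube_properties}'' is in fact stated explicitly in the paper as Proposition~\ref{prop:fibration_cube_properties}, so no additional dualization work is needed there.
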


\begin{proof}
Consider part (a). Let $V\in\capA$. Then we know that the indexing sets $\{T\in\capA:T\supset V\}$ and $\{T:W\supset T\supset V\}$ are the same, by $\capA\subset\capA_W^\mathrm{max}$ concave. It follows that $\capX_V\Iso\lim_{W\supset T\supset V}\capX_T=\lim_{T\in\capA,\,T\supset V}=(\capX^\capA)_V$. The same argument shows that $\capX_{V'}\Iso(\capX^\capA)_{V'}$, for each $V'\supset V$. Consider part (b). Let $V\subset W$ and $V\notin\capA$. Then the indexing sets $\{T\in\capA:T\supset V\}$ and $\{T\in\capA:T\supsetneqq V\}$ are the same, and hence the composition
\begin{align*}
  (\partial_V^W\capX^\capA)_\emptyset=(\capX^\capA)_V=\lim_{T\in\capA,\,T\supset V}X_T=\lim_{T\in\capA,\,T\supsetneqq V}X_T\Iso\lim_{\powerset_0(W-V)}\partial_V^W\capX^\capA
\end{align*}
is an isomorphism which finishes the proof of part (b).
\end{proof}

The following proposition shows that $\capX^\capA$ inherits several of the homotopical properties of $\capX$.

\begin{prop}
\label{prop:cartesian_estimates_for_induction_argument}
Let $\capO$ be an operad in $\capR$-modules and $W$ a nonempty finite set. Let $\capX$ be a fibration $W$-cube of $\capO$-algebras (resp. left $\capO$-modules) and consider any concave subset $\capA_W^\mathrm{min}\subset\capA\subset\capA_W^\mathrm{max}$. Assume that
\begin{itemize}
\item[(i)] for each nonempty subset $V\subset W$, the $V$-cube $\partial_{W-V}^W\capX$ (formed by all maps in $\capX$ between $\capX_{W-V}$ and $\capX_W$) is $k_V$-cartesian,
\item[(ii)] $k_{U}\leq k_V$ for each $U\subset V$.
\end{itemize}
Then, for each $U\subsetneqq V\subset W$, the $(V-U)$-cube $\partial_U^V\capX^\capA$ is $k_{V-U}$-cartesian.
\end{prop}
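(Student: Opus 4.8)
The plan is to run the exact dual of the argument used for the cocartesian analogue, Proposition~\ref{prop:cocartesian_estimates_for_induction_argument}: feed the cube $\capX^\capA$ into the downward induction already carried out in the proof of Proposition~\ref{prop:warmup_cartesian_estimates_for_induction_argument}, so that the only new thing to check is the base case of that induction.

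First I would observe that the downward induction on $|V|$ inside the proof of Proposition~\ref{prop:warmup_cartesian_estimates_for_induction_argument} is purely formal: once one knows that the top-dimensional faces $\partial_U^W\capY$ of a $W$-cube $\capY$ are $k_{W-U}$-cartesian for every $U\subsetneqq W$, the monotonicity hypothesis~(ii) together with Proposition~\ref{prop:map_of_cubical_diagrams}(c) propagates this to the statement that $\partial_U^V\capY$ is $k_{V-U}$-cartesian for every $U\subsetneqq V\subset W$. This uses nothing about $\capY$ beyond the stated cartesian-ness of its top faces; in particular it does not require $\capY$ to be a fibration cube, since $k$-cartesian-ness is homotopy invariant. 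Applying this with $\capY:=\capX^\capA$, the proposition reduces to the single assertion: for every $U\subsetneqq W$, the $(W-U)$-cube $\partial_U^W\capX^\capA$ is $k_{W-U}$-cartesian.

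This last assertion is precisely where Proposition~\ref{prop:properties_of_the_X_sup_A_construction} does the work. Fix $U\subsetneqq W$. If $U\in\capA$, then Proposition~\ref{prop:properties_of_the_X_sup_A_construction}(a) gives $\partial_U^W\capX^\capA\Iso\partial_U^W\capX$, and the right-hand cube is $k_{W-U}$-cartesian by hypothesis~(i) applied to the nonempty subset $W-U$. If $U\notin\capA$, then since $\capX$ is a fibration $W$-cube, Proposition~\ref{prop:properties_of_the_X_sup_A_construction}(b) tells us $\partial_U^W\capX^\capA$ is $\infty$-cartesian, hence $k_{W-U}$-cartesian. This completes the reduction and hence the proof.

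There is no genuinely hard step here; the content is a mechanical dualization of Proposition~\ref{prop:cocartesian_estimates_for_induction_argument}. The only points requiring care are bookkeeping: matching the base case $|V|=|W|$ of the downward induction with the reformulation (1)--(2) of the hypotheses used inside the proof of Proposition~\ref{prop:warmup_cartesian_estimates_for_induction_argument}, and keeping straight that it is the faces touching the \emph{terminal} vertex $W$ (rather than the initial vertex $\emptyset$, as in the cocartesian case) that one must control. The one genuine input---that faces of $\capX^\capA$ indexed by subsets not in $\capA$ collapse to $\infty$-cartesian cubes---has already been isolated in Proposition~\ref{prop:properties_of_the_X_sup_A_construction}, and rests on $\capX$ being a fibration cube so that the strict limits defining $\capX^\capA$ compute homotopy limits.
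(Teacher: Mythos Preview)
Your proposal is correct and follows exactly the paper's own proof: reduce via the downward induction of Proposition~\ref{prop:warmup_cartesian_estimates_for_induction_argument} to the top-dimensional faces $\partial_U^W\capX^\capA$, and then invoke Proposition~\ref{prop:properties_of_the_X_sup_A_construction} to handle the two cases $U\in\capA$ and $U\notin\capA$. The paper states this in one sentence; your version merely unpacks the same argument with more commentary.
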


\begin{proof}
By the downward induction argument in the proof of Proposition \ref{prop:warmup_cartesian_estimates_for_induction_argument}, it suffices to verify that the $V$-cube $\partial_{W-V}^W\capX^\capA$ is $k_V$-cartesian, for each nonempty subset $V\subset W$, and this follows immediately from Proposition \ref{prop:properties_of_the_X_sup_A_construction}.
\end{proof}

The following proposition appears in Goodwillie \cite[2.8]{Goodwillie_calc2} in the context of spaces, and exactly the same argument gives a proof in the context of structured ring spectra; this is an exercise left to the reader.

\begin{prop}
\label{prop:fibration_cube_properties}
Let $\capO$ be an operad in $\capR$-modules and $W$ a nonempty finite set. Let $\capX$ be a fibration $W$-cube of $\capO$-algebras (resp. left $\capO$-modules) and consider any concave subset $\capA\subset\powerset(W)$.
\begin{itemize}
\item[(a)] For each inclusion $\capA'\subset\capA$ of concave subsets of $\powerset(W)$, the induced map $\lim_{\capA}\capX\rarrow\lim_{\capA'}\capX$ is a fibration in $\AlgO$ (resp. $\LtO$),
\item[(b)] For any concave subsets $\capA,\capB$ of $\powerset(W)$, the diagram
\begin{align*}
\xymatrix{
  \lim_{\capA\cup\capB}\capX\ar[r]\ar[d] & \lim_{\capB}\capX\ar[d]\\
  \lim_{\capA}\capX\ar[r] & \lim_{\capA\cap\capB}\capX
}
\end{align*}
is a pullback diagram of fibrations in $\AlgO$ (resp. $\LtO$).
\item[(c)] If $A\in\capA$ is minimal and the fibration $$\capX_A\Iso\lim_{\powerset(W-A)}\partial_A^W\capX\rarrow\lim_{\powerset_0(W-A)}\partial_A^W\capX$$
is $k_{W-A}$-connected, then the fibration $$\lim_\capA\capX\rarrow\lim_{\capA-\{A\}}\capX$$ is $k_{W-A}$-connected.
\end{itemize}
\end{prop}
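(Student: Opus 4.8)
The plan is to prove this as the formal dual of Proposition~\ref{prop:cofibration_cube_properties} (Goodwillie~\cite[2.8]{Goodwillie_calc2}), using only that fibrations are stable under base change and composition in a model category, together with the fact that limits and fibrations in $\AlgO$ and $\LtO$ are created (resp.\ detected) by the forgetful functors to $\ModR$ and $\SymSeq$ (Proposition~\ref{prop:basic_properties_LTO} and Definition~\ref{defn:stable_flat_positive_model_structures}). The relevant combinatorial fact is that ``concave'' means ``upward closed'' in $\powerset(W)$, and that deleting a minimal element $A$ of a concave set $\capA$ leaves a concave set $\capA-\{A\}$: if $T\in\capA-\{A\}$ and $T\subset T'$ then $T'\in\capA$, and $T'=A$ would force $T\subsetneq A$ with $T\in\capA$, contradicting minimality. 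No connectivity hypotheses on $\capR,\capO,\capX_\emptyset$ are used.

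The one computation I would isolate is this: if $\capA$ is concave, $A\in\capA$ is minimal, and $\capA':=\capA-\{A\}$, then
\begin{align*}
\xymatrix{
  \lim_\capA\capX\ar[r]\ar[d] & \capX_A\ar[d]\\
  \lim_{\capA'}\capX\ar[r] & \lim_{\powerset_0(W-A)}\partial_A^W\capX
}
\end{align*}
is a pullback square in $\AlgO$ (resp.\ $\LtO$). Indeed, since $\capA$ is concave and contains $A$ it contains every $T\supsetneq A$, and these are exactly the objects $A\cup S$, $\emptyset\neq S\subset W-A$, indexing $\lim_{\powerset_0(W-A)}\partial_A^W\capX$; hence a compatible family over $\capA$ is the same as a compatible family over $\capA'$ together with an element of $\capX_A$ whose image in $\lim_{T\supsetneq A}\capX_T$ agrees with the restriction of that family. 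This is the asserted pullback in the underlying category, and therefore in $\AlgO$ (resp.\ $\LtO$) since the forgetful functors create limits. The bottom map is a fibration because $\capX$ is a fibration cube, so its base change $\lim_\capA\capX\rarrow\lim_{\capA'}\capX$ is a fibration.

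Part (a) then follows by induction on $|\capA-\capA'|$: given concave $\capA'\subset\capA$, choose a minimal element $A$ of $\capA-\capA'$; it is also minimal in $\capA$ (any $T\subsetneq A$ with $T\in\capA$ would lie in $\capA-\capA'$, as $\capA'$ is upward closed), so the previous paragraph exhibits $\lim_\capA\capX\rarrow\lim_{\capA-\{A\}}\capX$ as a fibration with $\capA'\subset\capA-\{A\}$ concave, and a composition of fibrations is a fibration. In particular, since any nonempty concave $\capA$ contains $W$, the map $\lim_\capA\capX\rarrow\capX_W$ is a fibration and $\capX_W$ is fibrant, so every such limit is fibrant. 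For part (b), $\capA\cup\capB$ and $\capA\cap\capB$ are again concave, so all four maps of the square are fibrations by part (a); and the square is a pullback because a compatible family over $\capA\cup\capB$ is the same as a pair of compatible families over $\capA$ and over $\capB$ agreeing on $\capA\cap\capB$ --- the only compatibility to check runs across a morphism $T\subset T'$ with $T\in\capB$ and $T'\in\capA$, and there $T'\in\capB$ automatically (as $\capB$ is upward closed), hence $T'\in\capA\cap\capB$ and the two families already agree there. Creation of limits by the forgetful functors upgrades this to $\AlgO$ (resp.\ $\LtO$).

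For part (c), $\capA-\{A\}$ is concave and the displayed square presents $\lim_\capA\capX\rarrow\lim_{\capA-\{A\}}\capX$ as the base change of $\capX_A\rarrow\lim_{\powerset_0(W-A)}\partial_A^W\capX$, which is $k_{W-A}$-connected by hypothesis; I claim the base change is $k_{W-A}$-connected. Passing to underlying objects, the base change is formed in $\ModR$ (resp.\ $\SymSeq$); all objects in sight are fibrant (by part (a), as above), both maps are fibrations between fibrant objects, and a strict pullback preserves strict fibers, so the two fibrations have the same fiber up to weak equivalence. Since for a fibration between fibrant objects of $\ModR$ or $\SymSeq$ the connectivity is read off from the long exact sequence of homotopy groups of its fiber, the two maps have the same connectivity, and therefore $\lim_\capA\capX\rarrow\lim_{\capA-\{A\}}\capX$ is $k_{W-A}$-connected. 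The main thing to be careful about is exactly this last step --- that ``$k$-connected fibration'' is stable under base change, which rests on the fiber sequence of a fibration and the fact that strict pullbacks preserve fibers --- together with the bookkeeping of upward-closed index sets; beyond that the argument is Goodwillie's, dualized.
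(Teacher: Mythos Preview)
Your proposal is correct and is precisely the dualization of Goodwillie's argument that the paper has in mind: the paper does not give a proof of this proposition at all, but simply states that it ``appears in Goodwillie \cite[2.8]{Goodwillie_calc2} in the context of spaces, and exactly the same argument gives a proof in the context of structured ring spectra; this is an exercise left to the reader.'' Your write-up is a careful execution of that exercise, with the key pullback square (for $A$ minimal in a concave $\capA$) playing exactly the role dual to the pushout square in the cofibration-cube version, and the stability of $k$-connected fibrations under base change handled via the fiber comparison.
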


The purpose of the following induction argument is to leverage the higher dual homotopy excision result (Theorem \ref{thm:higher_dual_homotopy_excision}) for structured ring spectra into a conceptual proof of the second main theorem in this paper (Theorem \ref{thm:higher_dual_blakers_massey})---the higher dual Blakers-Massey theorem for structured ring spectra. Proposition \ref{prop:dual_cubical_induction_argument} is motivated by Goodwillie \cite[proof of (2.6)]{Goodwillie_calc2}; it is essentially Goodwillie's dual cubical induction argument, appropriately modified to our situation. The reader who is interested in an alternate proof of Theorem \ref{thm:higher_dual_blakers_massey}, which is more efficient, but requires a little extra calculation at the end, may skip directly to Remark \ref{rem:alt_proof_of_higher_dual_blakers_massey}.

\begin{prop}[Dual cubical induction argument]
\label{prop:dual_cubical_induction_argument}
Let $\capO$ be an operad in $\capR$-modules and $W$ a nonempty finite set. Suppose $\capA\subsetneqq\capA_W^\mathrm{max}$ is concave, $A\in\capA$ minimal, $|W-A|\geq 2$, and $\capA':=\capA-\{A\}$. Let $\capX$ be a fibration $W$-cube of $\capO$-algebras (resp. left $\capO$-modules). Assume that $\capR,\capO,\capX_\emptyset$ are $(-1)$-connected, and suppose that
\begin{itemize}
\item[(i)] for each nonempty subset $V\subset W$, the $V$-cube $\partial_{W-V}^W\capX$ (formed by all maps in $\capX$ between $\capX_{W-V}$ and $\capX_W$) is $k_V$-cartesian,
\item[(ii)] $-1\leq k_{U}\leq k_V$ for each $U\subset V$.
\end{itemize}
If $X^{\capA'}$ is $j$-cocartesian, then $\capX^\capA$ is $j$-cocartesian, where $j$ is the minimum of $|W|+\sum_{V\in\lambda}k_V$ over all partitions $\lambda$ of $W$ by nonempty sets not equal to $W$.
\end{prop}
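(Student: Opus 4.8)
The plan is to dualize, essentially line for line, the proof of Proposition~\ref{prop:cubical_induction_argument}, systematically replacing the $\capX_\capA$-construction, colimits, pushouts (Proposition~\ref{prop:cofibration_cube_properties}) and cocartesian face estimates (Proposition~\ref{prop:cocartesian_estimates_for_induction_argument}) used there by the $\capX^\capA$-construction, limits, pullbacks (Proposition~\ref{prop:fibration_cube_properties}) and the cartesian face estimates of Proposition~\ref{prop:cartesian_estimates_for_induction_argument}. As there, we argue by induction on $|W|$: the hypotheses force $A\neq\emptyset$ (otherwise concavity of $\capA$ would give $\capA=\capA_W^\mathrm{max}$) and $|W-A|\geq 2$, hence $|W|\geq 3$, so the cases $|W|\leq 2$ are vacuous; in the inductive step we are free to invoke Theorem~\ref{thm:higher_dual_blakers_massey} for cubes of dimension strictly less than $|W|$, which follows from the induction hypothesis together with the already established Theorem~\ref{thm:higher_dual_homotopy_excision}.

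The crux of the argument is the analysis of the map of cubes $\capY\colon\partial_\emptyset^A\capX^\capA\rarrow\partial_\emptyset^A\capX^{\capA'}$, regarded as an $(A\cup\{*\})$-cube with $*$-absent face $\partial_\emptyset^A\capX^\capA$ and $*$-present face $\partial_\emptyset^A\capX^{\capA'}$. Two structural facts are needed. First, the dual of \eqref{eq:subdiagram_of_identity_maps}: for $V\not\subset A$ the map $(\capX^\capA)_V\rarrow(\capX^{\capA'})_V$ is the identity, since then $A$ lies in neither defining limit. Second, the dual of \eqref{eq:pushout_diagram_in_induction_step}: for $V\subset U'\subset A$ the commutative square with vertices $(\capX^\capA)_V,\,(\capX^\capA)_{U'},\,(\capX^{\capA'})_V,\,(\capX^{\capA'})_{U'}$ is a homotopy pullback --- this follows from Proposition~\ref{prop:fibration_cube_properties}(b) applied to the concave subsets $\bigl(\capA\cap\{T:T\supset V\}\bigr)-\{A\}$ and $\capA\cap\{T:T\supset U'\}$, using that $A$ is minimal in $\capA$. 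With these, one checks that $\capY$ satisfies the hypotheses of Theorem~\ref{thm:higher_dual_blakers_massey} with the cartesian face estimates $k'_V=k_V$ for $\emptyset\neq V\subset A$ (by Proposition~\ref{prop:cartesian_estimates_for_induction_argument} applied to $\capX^{\capA'}$), $k'_{\{*\}}=k_{W-A}$ (the $\{*\}$-face is the map $\capX_A=(\capX^\capA)_A\rarrow(\capX^{\capA'})_A=\lim_{\powerset_0(W-A)}\partial_A^W\capX$, which is $k_{W-A}$-connected by hypothesis~(i) with $V=W-A$, equivalently by Proposition~\ref{prop:fibration_cube_properties}(c)), and $k'_{U\cup\{*\}}=\infty$ for $\emptyset\neq U\subset A$ (the corresponding face is $\partial_{A-U}^A\capX^\capA\rarrow\partial_{A-U}^A\capX^{\capA'}$, a cube of dimension $\geq 2$ all of whose $2$-faces in the $*$-direction are homotopy pullbacks by the second fact, hence $\infty$-cartesian by Proposition~\ref{prop:map_of_cubical_diagrams}); condition~(ii) for these estimates is immediate, and the $(-1)$-connectivity of $\capY_\emptyset=\lim_\capA\capX$ needed to apply Theorem~\ref{thm:higher_dual_blakers_massey} is provided by the hypotheses on $\capX$. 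Since $A\neq\emptyset$ the top estimate $k'_{A\cup\{*\}}$ equals $\infty$, so Theorem~\ref{thm:higher_dual_blakers_massey} gives that $\capY$ is $k''$-cocartesian with $k''=(|A|+1)+\min_\mu\bigl(k_{W-A}+\sum_{V\in\mu}k_V\bigr)$, the minimum taken over partitions $\mu$ of $A$ by nonempty sets; adjoining the block $W-A$ to each $\mu$ yields a partition of $W$ by nonempty sets none equal to $W$, so that $k_{W-A}+\sum_{V\in\mu}k_V\geq j-|W|$, whence $k''\geq(|A|+1)+(j-|W|)=j-|W-A|+1$. In particular $\partial_\emptyset^A\capX^\capA\rarrow\partial_\emptyset^A\capX^{\capA'}$ is $(j-|W-A|+1)$-cocartesian.

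One then promotes this to larger faces: for every $A'$ with $A\subset A'\subsetneq W$ the map $\partial_\emptyset^{A'}\capX^\capA\rarrow\partial_\emptyset^{A'}\capX^{\capA'}$ is $(j-|W-A'|+1)$-cocartesian. This is proved by induction on $|A'|$, the base case $A'=A$ being the previous paragraph; for the inductive step, given $A'$ with $|W-A'|\geq 2$ and any $b\in W-A'$, split $\partial_\emptyset^{A'\cup\{b\}}\capX^\capA\rarrow\partial_\emptyset^{A'\cup\{b\}}\capX^{\capA'}$ along the $b$-direction: its $b$-absent face is $\partial_\emptyset^{A'}\capX^\capA\rarrow\partial_\emptyset^{A'}\capX^{\capA'}$, which is $(j-|W-A'|+1)$-cocartesian by the inductive hypothesis, and its $b$-present face is an identity map of cubes, hence $\infty$-cocartesian, because $b\notin A$ and the relevant vertices agree by the dual of \eqref{eq:subdiagram_of_identity_maps}; Proposition~\ref{prop:map_of_cubical_diagrams}(b) then gives that the full cube is $(j-|W-A'|+2)=(j-|W-(A'\cup\{b\})|+1)$-cocartesian. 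Finally, fix any $a\in W-A$; by the dual of \eqref{eq:subdiagram_of_identity_maps} we have $\partial_{\{a\}}^W\capX^\capA=\partial_{\{a\}}^W\capX^{\capA'}$, so the $W$-cube $\capX^\capA$ can be written as the composition of $W$-cubes $\partial_\emptyset^{W-\{a\}}\capX^\capA\rarrow\partial_\emptyset^{W-\{a\}}\capX^{\capA'}\rarrow\partial_{\{a\}}^W\capX^{\capA'}$; the first of these is $j$-cocartesian by the promotion step with $A'=W-\{a\}$, and the second is $\capX^{\capA'}$ itself, $j$-cocartesian by hypothesis, so Proposition~\ref{prop:composed_map_of_cubical_diagrams}(a) shows that $\capX^\capA$ is $j$-cocartesian, as claimed.

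The main obstacle, I expect, will be the second paragraph: constructing the auxiliary cube $\capY$, determining its cartesian face estimates (which requires the dual pullback lemma and its use to manufacture $\infty$-cartesian faces, as well as keeping track of the $(-1)$-connectivity of $\lim_\capA\capX$), and carrying out the partition bookkeeping that turns the conclusion of Theorem~\ref{thm:higher_dual_blakers_massey} for $\capY$ into the asserted bound $j-|W-A|+1$. The promotion step and the concluding decomposition are, by contrast, routine applications of the connectivity estimates recorded in Propositions~\ref{prop:map_of_cubical_diagrams} and \ref{prop:composed_map_of_cubical_diagrams}.
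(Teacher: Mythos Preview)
Your proof is correct and follows essentially the same approach as the paper's own proof: induction on $|W|$, construction of the auxiliary $(A\cup\{*\})$-cube $\capY$ with the same cartesian face estimates $k'_V$, the upward promotion step to larger $A'\supset A$ via Proposition~\ref{prop:map_of_cubical_diagrams}(b), and the final decomposition \eqref{eq:dual_final_decomposition_for_induction_argument} using Proposition~\ref{prop:composed_map_of_cubical_diagrams}(a). The only cosmetic difference is that you explicitly invoke Theorem~\ref{thm:higher_dual_blakers_massey} for $\capY$ (justifying this via the induction hypothesis together with Theorem~\ref{thm:higher_dual_homotopy_excision}), whereas the paper writes ``by our induction hypothesis applied to $\capY$'' and arrives at the same cocartesian estimate; your observation that the hypotheses force $|W|\geq 3$, making the base cases vacuous, is cleaner than the paper's phrasing.
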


\begin{rem}
In the case that $\capX$ is a $3$-cube, the following diagram illustrates one of the cubical decompositions covered by Proposition \ref{prop:dual_cubical_induction_argument}. It corresponds to the sequence of minimal elements: $\{3\}$, $\{2\}$, $\{1\}$, $\emptyset$.

\begin{align*}
\xymatrix@!0{
\capX_{\{1,2,3\}} &&
\capX_{\{2,3\}}\ar[ll]\ar@{=}[rr] &&
\capX_{\{2,3\}}\ar@{=}[rr] &&
\capX_{\{2,3\}}\\
&\capX_{\{1,3\}}\ar[ul] &&
\cdot\ar[ll]\ar[ul] &&
\capX_{\{3\}}\ar@{=}[rr]\ar[ll]\ar[ul] &&
\capX_{\{3\}}\ar[ul]\\
\capX_{\{1,2\}}\ar@{=}[dd]\ar[uu] &&
\cdot\ar@{=}'[r][rr]\ar'[l][ll]\ar'[u][uu] &&
\cdot\ar'[u][uu] &&
\capX_{\{2\}}\ar@{=}'[d][dd]\ar'[l][ll]\ar[uu]|(0.47)\hole|(0.5)\hole\\
&\cdot\ar[dd]\ar[ul]\ar[uu] &&
\cdot\ar[ll]\ar[ul]\ar[uu] &&
\cdot\ar[ll]\ar[ul]\ar[uu] &&
\cdot\ar[ll]\ar[ul]\ar[uu]\\
\capX_{\{1,2\}}\ar@{=}[dd]&&&&&&
\capX_{\{2\}}\ar@{=}'[d][dd]\ar'[lllll][llllll]\\
&\capX_{\{1\}}\ar@{=}[dd]\ar[ul]\ar[uu]&&&&&&
\cdot\ar[llllll]\ar[ul]\ar[uu]\\
\capX_{\{1,2\}} &&&&&&
\capX_{\{2\}}\ar'[lllll][llllll]\\
&\capX_{\{1\}}\ar[ul]&&&&&&\capX_\emptyset\ar[llllll]\ar[ul]\ar[uu]
}
\end{align*}
\end{rem}

\begin{proof}[Proof of Proposition \ref{prop:dual_cubical_induction_argument}]
It suffices to consider the case of left $\capO$-modules. The argument is by induction on $|W|$. The case $|W|=2$ is verified by the proof of Theorem \ref{thm:dual_blakers_massey}. Let $n\geq 3$ and assume the proposition is true for each $2\leq|W|<n$. Let's verify it remains true for $|W|=n$.

We know by assumption that $\capX^{\capA'}$ is $j$-cocartesian in $\LtO$. We want to verify that $\capX^\capA$ is $j$-cocartesian in $\LtO$ (it might be helpful at this point to look ahead to \eqref{eq:dual_final_decomposition_for_induction_argument} for the decomposition of $\capX^\capA$ that we will use to finish the proof). Consider the induced map of $W$-cubes $\capX^{\capA}\rarrow\capX^{\capA'}$. Note that if $U\not\subset A$, then $\{T\in\capA:T\supset U\}=\{T\in\capA':T\supset U\}$; in particular, each of the maps
\begin{align}
\label{eq:dual_subdiagram_of_identity_maps}
\xymatrix{
  (\capX^{\capA})_U\ar[r]^-{\id} & (\capX^{\capA'})_U,
  & U\not\subset A
}
\end{align}
in $\capX^{\capA}\rarrow\capX^{\capA'}$ is the identity map. Note that if $U\subset V\subset A$, then the diagram
\begin{align}
\label{eq:pullback_diagram_in_induction_step}
\xymatrix{
  (\capX^{\capA})_U\ar[r]\ar[d] & (\capX^{\capA'})_U\ar[d]\\
  (\capX^{\capA})_V\ar[r] & (\capX^{\capA'})_V, &
  U\subset V\subset A
}
\end{align}
is a pullback diagram by Proposition \ref{prop:fibration_cube_properties}(b); in particular, focusing on this case is the same as focusing on the subdiagram $\partial_\emptyset^A\capX^\capA\rarrow\partial_\emptyset^A\capX^{\capA'}$ of $\capX^\capA\rarrow\capX^{\capA'}$.

Let's first verify that $\partial_\emptyset^A\capX^\capA\rarrow\partial_\emptyset^A\capX^{\capA'}$ is $(j+|A|+1-|W|)$-cocartesian in $\LtO$. Let $\capY$ denote $\partial_\emptyset^A\capX^\capA\rarrow\partial_\emptyset^A\capX^{\capA'}$ regarded as an $(A\cup\{*\})$-cube as follows:
\begin{align*}
  \capY_V&:=(\partial_\emptyset^A\capX^\capA)_V=(\capX^\capA)_V,\quad\ V\subset A,\\
  \capY_V&:=(\partial_\emptyset^A\capX^{\capA'})_U=(\capX^{\capA'})_U,\quad U\subset A,\quad V=U\cup\{*\}.
\end{align*}
Since $|A\cup\{*\}|<|W|$, our induction assumption can be applied to $\capY$, provided that the appropriate $k'_V$-cartesian estimates are satisfied. We claim that with the following definitions
\begin{align*}
  k'_V&:=k_V,\quad\quad \emptyset\neq V\subset A,\\
  k'_V&:=k_{W-A},\quad\quad V=\{*\}\\
  k'_V&:=\infty,\quad\quad \emptyset\neq U\subset A,\quad V=U\cup\{*\},
\end{align*}
the cube $\partial_{(A\cup\{*\})-V}^{A\cup\{*\}}\capY$ (formed by all maps in $\capY$ between $\capY_{(A\cup\{*\})-V}$ and $\capY_{A\cup\{*\}}$) is $k'_V$-cartesian, for each nonempty subset $V\subset A\cup\{*\}$. Let's verify this now: If $V\subset A$ is nonempty, then $\partial_{(A\cup\{*\})-V}^{A\cup\{*\}}\capY$ is the cube $\partial_{A-V}^A\capX^{\capA'}$ which is $k_V$-cartesian by Proposition \ref{prop:cartesian_estimates_for_induction_argument}. If $V=\{*\}$, then $\partial_{(A\cup\{*\})-V}^{A\cup\{*\}}\capY$ is the map $(\capX^\capA)_A\rarrow(\capX^{\capA'})_A$ which is $k_{W-A}$-connected by Proposition \ref{prop:fibration_cube_properties}(c). Finally, if $V=U\cup\{*\}$ for any nonempty $U\subset A$, then $\partial_{(A\cup\{*\})-V}^{A\cup\{*\}}\capY$ is the cube $\partial_{A-U}^A\capX^\capA\rarrow\partial_{A-U}^A\capX^{\capA'}$ which is $\infty$-cartesian by
\eqref{eq:pullback_diagram_in_induction_step} and Proposition \ref{prop:map_of_cubical_diagrams}(d). Hence by our induction hypothesis applied to $\capY$: since the sum $\sum_{V\in\lambda'}k'_V$ for a partition $\lambda'$ of $A\cup\{*\}$ by nonempty sets is always either $\infty$, or the sum $\sum_{U\in\lambda}k_U$ for a partition $\lambda$ of $W$ by nonempty sets in which some $U$ is $W-A$, we know that $\capY$ is $j'$-cocartesian in $\LtO$, where $j'$ is the minimum of $|A\cup\{*\}|+\sum_{U\in\lambda}k_U$ over all partitions $\lambda$ of $W$ by nonempty sets in which some $U$ is $W-A$. In particular, this implies that $\capY$, and hence $\partial_\emptyset^A\capX^{\capA}\rarrow\partial_\emptyset^A\capX^{\capA'}$, is $(j+|A|+1-|W|)$-cocartesian in $\LtO$.

We next want to verify that
\begin{align}
\label{eq:induction_argument_cocartesian_estimate_for_certain_subdiagrams}
  \partial_\emptyset^{A'}\capX^\capA\rarrow\partial_\emptyset^{A'}\capX^{\capA'}
  \quad \text{is $(j+|A'|+1-|W|)$-cocartesian in $\LtO$}
\end{align}
for each $A'\supset A$. We know that \eqref{eq:induction_argument_cocartesian_estimate_for_certain_subdiagrams} is true for $A'=A$ by above. We will argue by upward induction on $|A'|$. Suppose that \eqref{eq:induction_argument_cocartesian_estimate_for_certain_subdiagrams} is true for some $A'\supset A$. Let $a\in W-A'$ and note that the cube $\partial_\emptyset^{A'\cup\{a\}}\capX^{\capA}\rarrow\partial_\emptyset^{A'\cup\{a\}}\capX^{\capA'}$ can be written as the diagram of cubes
\begin{align}
\label{eq:upward_induction_cube_factorization}
\xymatrix{
  \partial_\emptyset^{A'}\capX^\capA\ar[r]\ar[d] &
  \partial_\emptyset^{A'}\capX^{\capA'}\ar[d]\\
  \partial_{\{a\}}^{A'\cup\{a\}}\capX^\capA\ar[r]^-{\id} &
  \partial_{\{a\}}^{A'\cup\{a\}}\capX^{\capA'}
}
\end{align}
We know the bottom arrow is the identity by \eqref{eq:dual_subdiagram_of_identity_maps}, and the top arrow is $(j+|A'|+1-|W|)$-cocartesian in $\LtO$ by assumption. It follows from  Proposition \ref{prop:map_of_cubical_diagrams}(b) that  \eqref{eq:upward_induction_cube_factorization} is $(j+|A'|+2-|W|)$-cocartesian in $\LtO$, which finishes the argument that \eqref{eq:induction_argument_cocartesian_estimate_for_certain_subdiagrams} is true for each $A'\supset A$.

To finish off the proof, let $a\in W-A$ and note by \eqref{eq:dual_subdiagram_of_identity_maps} that $\capX^\capA$ can be written as the composition of cubes
\begin{align}
\label{eq:dual_final_decomposition_for_induction_argument}
  \partial_\emptyset^{W-\{a\}}\capX^\capA\rarrow
  \partial_\emptyset^{W-\{a\}}\capX^{\capA'}\rarrow
  \partial_{\{a\}}^W\capX^{\capA'}.
\end{align}
The left-hand arrow is $j$-cocartesian in $\LtO$ by \eqref{eq:induction_argument_cocartesian_estimate_for_certain_subdiagrams} and the right-hand arrow is $\capX^{\capA'}$ which is $j$-cocartesian in $\LtO$ by assumption, hence by Proposition \ref{prop:composed_map_of_cubical_diagrams}(a) it follows that $\capX^\capA$ is $j$-cocartesian in $\LtO$ which finishes the proof.
\end{proof}

\begin{thm}[Theorem \ref{thm:higher_dual_blakers_massey} restated]
Let $\capO$ be an operad in $\capR$-modules and $W$ a nonempty finite set. Let $\capX$ be a $W$-cube of $\capO$-algebras (resp. left $\capO$-modules). Assume that $\capR,\capO,\capX_\emptyset$ are $(-1)$-connected, and suppose that
\begin{itemize}
\item[(i)] for each nonempty subset $V\subset W$, the $V$-cube $\partial_{W-V}^W\capX$ (formed by all maps in $\capX$ between $\capX_{W-V}$ and $\capX_W$) is $k_V$-cartesian,
\item[(ii)] $-1\leq k_{U}\leq k_V$ for each $U\subset V$.
\end{itemize}
Then $\capX$ is $k$-cocartesian, where $k$ is the minimum of $k_W+|W|-1$ and $|W|+\sum_{V\in\lambda}k_V$ over all partitions $\lambda$ of $W$ by nonempty sets not equal to $W$.
\end{thm}

\begin{proof}
It suffices to consider the case of left $\capO$-modules. It is enough to consider the special case where $\capX$ is a fibration $W$-cube in $\LtO$. Let $\capA:=\capA^{\mathrm{max}}_W$, $\capA':=\capA^{\mathrm{max}}_W-\emptyset$, and note by \eqref{eq:dual_subdiagram_of_identity_maps} that $\capX^\capA$, which is equal to $\capX$, can be written as the composition of cubes
\begin{align}
\label{eq:dual_final_decomposition_dual_higher_homotopy_excision}
  \partial_\emptyset^{W-\{a\}}\capX^\capA\rarrow
  \partial_\emptyset^{W-\{a\}}\capX^{\capA'}\rarrow
  \partial_{\{a\}}^W\capX^{\capA'}.
\end{align}
We know by Proposition \ref{prop:dual_cubical_induction_argument}, together with Theorem \ref{thm:higher_dual_homotopy_excision} (to start the induction using $\capA'=\capA_W^{\mathrm{min}}$) that the right-hand arrow, which is $\capX^{\capA'}$, is $j$-cocartesian in $\AlgO$, where $j$ is the minimum of $|W|+\sum_{V\in\lambda}k_V$ over all partitions $\lambda$ of $W$ by nonempty sets not equal to $W$. We claim that the left-hand arrow is $(k_W+|W|-1)$-cocartesian in $\LtO$; this follows from upward induction by arguing exactly as in \eqref{eq:upward_induction_cube_factorization}, but by starting with the observation (see the $k'_V$ estimates in the proof of Proposition \ref{prop:dual_cubical_induction_argument}) that the map $\partial_\emptyset^\emptyset\capX^\capA\rarrow\partial_\emptyset^\emptyset\capX^{\capA'}$, which is the map $(\capX^\capA)_\emptyset\rarrow(\capX^{\capA'})_\emptyset$, is $k_W$-connected. Hence it follows from Proposition \ref{prop:composed_map_of_cubical_diagrams} that the composition,
which is $\capX$, is $k$-cocartesian in $\LtO$ which finishes the proof.
\end{proof}

\begin{rem}
\label{rem:alt_proof_of_higher_dual_blakers_massey}
An alternate proof of Theorem \ref{thm:higher_dual_blakers_massey} can be obtained from the higher Blakers-Massey theorem (Theorem \ref{thm:higher_blakers_massey}) by arguing as in the proof of Theorem \ref{thm:higher_dual_homotopy_excision}. Since the dual cubical induction argument is conceptually very useful and will be needed elsewhere, we include both approaches for the interested reader, with only a few details of the alternate proof below left to the reader.
\end{rem}

\begin{proof}
It suffices to consider the case of left $\capO$-modules. The argument is by induction on $|W|$. The case $|W|$=1 is trivial and the case $|W|=2$ is verified by Theorem \ref{thm:dual_blakers_massey}. Let $n\geq 3$ and assume the theorem is true for each $2\leq |W|< n$. Let's verify it remains true for $|W|=n$.

It suffices to consider the special case where $\capX$ is a cofibration $W$-cube in $\LtO$. Consider the induced maps
\begin{align}
\label{eq:alt_composition_of_maps_for_determining_cocartesian_estimate}
\xymatrix{
  \colim\nolimits^{\SymSeq}_{\powerset_1(W)}\capX\ar[r]^-{(*)} &
  \colim\nolimits^{\LtO}_{\powerset_1(W)}\capX\ar[r]^-{(**)} &
  \colim\nolimits^{\LtO}_{\powerset(W)}\Iso\capX_W
}
\end{align}
We want to show that $(**)$ is $k$-connected. Consider the $W$-cube $\capX':=\capX_{\capA^W_\mathrm{max}-\{W\}}$ in $\LtO$ and note that $(*)$ is isomorphic to the map
\begin{align*}
\xymatrix{
  \colim\nolimits^{\SymSeq}_{\powerset_1(W)}\capX'\ar[r]^-{(*)'} &
  \colim\nolimits^{\SymSeq}_{\powerset(W)}\capX'\Iso\capX'_W
}
\end{align*}
We know that $\capX'$ is $\infty$-cocartesian in $\LtO$. By Proposition \ref{prop:warmup_cartesian_estimates_for_induction_argument} we know that the $V$-cube $\partial_U^{U\cup V}\capX$ is $k_V$-cartesian for each disjoint $U$ and $V$. Hence by the induction hypothesis, for each $V\subsetneqq W$ the cube $\partial_\emptyset^V\capX$ is $k'_V$-cocartesian, where $k'_V$ is the minimum of $k_V+|V|-1$ and $|V|+\sum_{U\in\lambda'}k_U$ over all partitions $\lambda'$ of $V$ by nonempty sets not equal to $V$. In particular, the $W$-cube $\capX'$ satisfies the conditions of the higher Blakers-Massey theorem (Theorem \ref{thm:higher_blakers_massey}) with $k'_W=\infty$ and the $k'_V$ above, and hence it follows that $\capX'$ is $k'$-cartesian, where $k'$ is the minimum of $-|W|+\sum_{V\in\lambda}(k'_V+1)$ over all partitions $\lambda$ of $W$ by nonempty sets not equal to $W$.

Hence by Proposition \ref{prop:comparing_cocartesian_and_cartesian_estimates_in_ModR} we know that $\capX'$ is $(k'+|W|-1)$-cocartesian in $\SymSeq$, and therefore $(*)$ is $(k'+|W|-1)$-connected. Since the composition in \eqref{eq:alt_composition_of_maps_for_determining_cocartesian_estimate} is $(k_W+|W|-1)$-connected, it follows from Proposition \ref{prop:connectivity_estimates_for_composition_of_maps}(b) that $(**)$ is $k$-connected, where $k$ is the minimum of $k_W+|W|-1$ and $\sum_{V\in\lambda}(k'_V+1)$ over all partitions $\lambda$ of $W$ by nonempty sets not equal to $W$; it is an exercise left to the reader to verify that this description of $k$ agrees with Theorem \ref{thm:higher_dual_blakers_massey}.
\end{proof}

\bibliographystyle{plain}
\bibliography{HigherHomotopyExcision}

\end{document}